\newtheorem{thm}{Theorem}[section]
\newtheorem{cor}[thm]{Corollary}
\newtheorem{prop}[thm]{Proposition}
\theoremstyle{definition}
\theoremstyle{remark}
\newtheorem{rem}[thm]{Remark}
\numberwithin{equation}{section}
\begin{document}

\title{A Kirchberg type tensor theorem for operator systems}

\author{Kyung Hoon Han}

\address{Department of Mathematics, The University of Suwon, Gyeonggi-do 445-743, Korea}

\email{kyunghoon.han@gmail.com}

\subjclass[2000]{46L06, 46L07, 47L07}

\keywords{operator system, tensor product, quotient, Kirchberg's theorem}

\thanks{This work was supported by the National Research Foundation of Korea Grant funded by the Korean Government (NRF-2012R1A1A1012190)}

\date{}

\dedicatory{}

\commby{}


\begin{abstract}
We construct operator systems $\mathfrak C_I$ that are universal in the sense that all operator systems can be realized as their quotients. They satisfy the operator system lifting property. Without relying on the theorem by Kirchberg, we prove the Kirchberg type tensor theorem $$\mathfrak C_I \otimes_{\min} B(H) = \mathfrak C_I \otimes_{\max} B(H).$$ Combining this with a result of Kavruk, we give a new operator system theoretic proof of Kirchberg's theorem and show that Kirchberg's conjecture is equivalent to its operator system analogue $$\mathfrak C_I \otimes_{\min} \mathfrak C_I =\mathfrak C_I \otimes_{\rm c} \mathfrak C_I.$$

It is natural to ask whether the universal operator systems $\mathfrak C_I$ are projective objects in the category of operator systems. We show that an operator system from which all unital completely positive maps into operator system quotients can be lifted is necessarily one-dimensional. Moreover, a finite dimensional operator system satisfying a perturbed lifting property can be represented as the direct sum of matrix algebras. We give an operator system theoretic approach to the Effros-Haagerup lifting theorem.
\end{abstract}

\maketitle

\section{Introduction}
Every Banach space can be realized as a quotient of $\ell_1(I)$ for a suitable choice of index set $I$. Moreover, every linear map $\varphi : \ell_1(I) \to E \slash F$ lifts to $\tilde{\varphi} : \ell_1(I) \to E$ with $\|\tilde{\varphi}\|<(1+\varepsilon) \|\varphi\|$. On noncommutative sides, $\oplus_1 T_{n_i}$ (respectively $C^*(\mathbb F)$) plays such a role in the category of operator spaces (respectively $C^*$-algebras). The purpose of this paper is to find operator systems that play such a role in the category of operator systems.

We construct operator systems $\mathfrak C_I$ that are universal in the sense that all operator systems can be realized as their quotients. The method of construction is motivated by \cite[Proposition 3.1]{Bl} and the coproduct of operator systems \cite{F,KL}. The index set $I$ is chosen to be sufficiently large that we can index the set $\mathcal S_{\|\cdot\| \le 1}^+$ of positive contractive elements in an operator system $\mathcal S$. The operator system $\mathfrak C_I$ is realized as the infinite coproduct of $\{ M_k \oplus M_k \}_{k \in \mathbb N}$ admitting copies of $M_k \oplus M_k$ up to the cardinality of $I$.

We prove that the operator systems $\mathfrak C_I$ satisfy the operator system lifting property: for any unital $C^*$-algebra $\mathcal A$ with its closed ideal $\mathcal I$ and the quotient map $\pi : \mathcal A \to \mathcal A \slash \mathcal I$, every unital completely positive map $\varphi : \mathfrak C_I \to \mathcal A \slash \mathcal I$ lifts to a unital completely positive map $\tilde{\varphi} : \mathfrak C_I \to \mathcal A$. It is helpful to picture the situation using the commutative diagram $$\xymatrix{ & \mathcal A \ar[d]^{\pi} \\ \mathfrak C_I \ar@{-->}[ur]^{\tilde{\varphi}} \ar[r]^{\varphi}& \mathcal A \slash \mathcal I.}$$

For a free group $\mathbb F$ and a Hilbert space $H$, Kirchberg \cite[Corollary 1.2]{Ki} proved that $$C^*(\mathbb F) \otimes_{\min} B(H) = C^*(\mathbb F) \otimes_{\max} B(H).$$  The proof was later simplified in \cite{P2} and \cite{FP} using operator space theory and operator system theory, respectively. Kirchberg's theorem is striking if we recall that $C^*(\mathbb F)$ and $B(H)$ are universal objects in the $C^*$-algebra category: every $C^*$-algebra is a $C^*$-quotient of $C^*(\mathbb F)$ and a $C^*$-subalgebra  of $B(H)$ for suitable choices of $\mathbb F$ and $H$.

For suitable choices of $I$ and $H$, every operator system is a subsystem of $B(H)$ by the Choi-Effros theorem \cite{CE} and is a quotient of $\mathfrak C_I$ which is proved in Section 3. We will prove a Kirchberg type tensor theorem $$\mathfrak C_I \otimes_{\min} B(H) = \mathfrak C_I \otimes_{\max} B(H)$$ in Section 4. The proof is independent of Kirchberg's theorem. Combining this with Kavruk's idea \cite{Ka2}, we give a new operator system theoretic proof of Kirchberg's theorem.

We also prove that the operator system analogue $$\mathfrak C_I \otimes_{\min} \mathfrak C_I =\mathfrak C_I \otimes_c \mathfrak C_I$$ of Kirchberg's conjecture $$C^*(\mathbb F) \otimes_{\min} C^*(\mathbb F) = C^*(\mathbb F) \otimes_{\max} C^*(\mathbb F)$$ is equivalent to Kirchberg's conjecture itself.

In the final section, we consider several lifting problems of completely positive maps. It is natural to ask whether the universal operator system $\mathfrak C_I$ is a projective object in the category of operator systems. In other words, for any operator system $\mathcal S$ and its kernel $\mathcal J$, does every unital completely positive map $\varphi : \mathfrak C_I \to \mathcal S \slash \mathcal J$ lift to a unital completely positive map $\tilde{\varphi} : \mathfrak C_I \to \mathcal S$? The answer is negative in an extreme manner. An operator system satisfying such a lifting property is necessarily one-dimensional. This is essentially due to Archimedeanization of quotients \cite{PT}. Even though some perturbation is allowed, there is also rigidity: for a finite dimensional operator system $E$ and a faithful state $\omega$, the following are equivalent:
\begin{enumerate}
\item[(i)] if $\varepsilon>0$ and $\varphi : E \to \mathcal S \slash \mathcal J$ is a completely positive map for an operator system $\mathcal S$ and its kernel $\mathcal J$, then there exists a self-adjoint lifting $\tilde{\varphi} : E \to \mathcal S$ of $\varphi$ such that $\tilde{\varphi}+\varepsilon \omega 1_{\mathcal S}$ is completely positive;
\item[(ii)] $E$ is unitally completely order isomorphic to the direct sum of matrix algebras.
\end{enumerate}
In order to prove it, we give a characterization of nuclearity via the projectivity and the minimal tensor product: an operator system $\mathcal S$ is nuclear if and only if $${\rm id}_{\mathcal S} \otimes \Phi : \mathcal S \otimes_{\min} \mathcal T_1 \to \mathcal S \otimes_{\min} \mathcal T_2$$ is a quotient map for any quotient map $\Phi : \mathcal T_1 \to \mathcal T_2$.

Finally, we present an operator system theoretic approach to the Effros-Haagerup lifting theorem \cite{EH}.

\section{Preliminaries}

Let $\mathcal S$ and $\mathcal T$ be operator systems. Following
\cite{KPTT1}, an {\em operator system structure} on $\mathcal S \otimes
\mathcal T$ is defined as a family of cones $M_n (\mathcal S \otimes_\tau
\mathcal T)^+$ satisfying:
\begin{enumerate}
\item[(T1)] $(\mathcal S \otimes \mathcal T, \{ M_n (\mathcal S \otimes_\tau \mathcal
T)^+ \}_{n=1}^\infty, 1_{\mathcal S} \otimes1_{\mathcal T})$ is an operator
system denoted by $\mathcal S \otimes_\tau \mathcal T$, \item[(T2)] $M_m(\mathcal
S)^+ \otimes M_n(\mathcal T)^+ \subset M_{mn} (\mathcal S \otimes_\tau \mathcal
T)^+$ for all $m, n \in \mathbb N$, and \item[(T3)] if $\varphi :
\mathcal S \to M_m$ and $\psi : \mathcal T \to M_n$ are unital completely
positive maps, then $\varphi \otimes \psi : \mathcal S \otimes_\tau \mathcal
T \to M_{mn}$ is a unital completely positive map.
\end{enumerate}
By an {\em operator system tensor product,} we mean a mapping
$\tau : \mathcal O \times \mathcal O \to \mathcal O$, such that for every pair of
operator systems $\mathcal S$ and $\mathcal T$, we have that $\tau (\mathcal S, \mathcal T)$ is an
operator system structure on $\mathcal S \otimes \mathcal T$, and denote it by $\mathcal S
\otimes_\tau \mathcal T$. We call an operator system tensor product
$\tau$ {\em functorial,} if the following property is satisfied:
\begin{enumerate}
\item[(T4)] for any operator systems $\mathcal S_1, \mathcal S_2, \mathcal T_1,
\mathcal T_2$ and unital completely positive maps $\varphi : \mathcal S_1
\to \mathcal T_1, \psi : \mathcal S_2 \to \mathcal T_2$, the map $\varphi \otimes
\psi : \mathcal S_1 \otimes \mathcal S_2 \to \mathcal T_1 \otimes \mathcal T_2$ is
unital completely positive.
\end{enumerate}

Given a linear mapping $\varphi : V \to W$ between vector spaces, its $n$-th amplification $\varphi_n : M_n(V) \to M_n(W)$ is defined as $\varphi_n([x_{i,j}])=[\varphi(x_{i,j})]$. For operator systems $\mathcal S$ and $\mathcal T$, we put
$$\begin{aligned} M_n(\mathcal S \otimes_{\min} \mathcal T)^+ = \{ X & \in
M_n(\mathcal S \otimes \mathcal T) :  (\varphi \otimes \psi)_n(X) \in
M_{nkl}^+ \text{ for all unital}\\ & \text{completely positive maps } \varphi : \mathcal S \to M_k, \psi : \mathcal T \to M_l \}. \end{aligned}$$  Then the family $\{ M_n(\mathcal S
\otimes_{\min} \mathcal T)^+ \}_{n=1}^\infty$ is an operator system
structure on $\mathcal S \otimes \mathcal T$.
Moreover, if we let $\iota_{\mathcal S} : \mathcal S \to \mathcal B(\mathcal H)$
and $\iota_{\mathcal T} : \mathcal T \to \mathcal B(\mathcal K)$ be any unital complete
order embeddings, then this is the operator system structure on
$\mathcal S \otimes \mathcal T$  arising from the embedding
$\iota_{\mathcal S} \otimes \iota_{\mathcal T} : \mathcal S \otimes \mathcal T \to \mathcal
B(\mathcal H \otimes \mathcal K)$ \cite[Theorem 4.4]{KPTT1}. We call the operator
system $(\mathcal S \otimes \mathcal T, \{ M_n(\mathcal S \otimes_{\min} \mathcal T)
\}_{n=1}^\infty, 1_{\mathcal S} \otimes 1_{\mathcal T})$ the {\em minimal}
tensor product of $\mathcal S$ and $\mathcal T$ and denote it by $\mathcal S
\otimes_{\min} \mathcal T$.

The mapping $\min : \mathcal O \times \mathcal O \to \mathcal O$ sending $(\mathcal S,
\mathcal T)$ to $\mathcal S \otimes_{\min} \mathcal T$ is an injective,
associative, symmetric and functorial operator system tensor
product. The positive cone of the minimal tensor product is the
largest among all possible positive cones of operator system
tensor products at each matrix level \cite[Theorem 4.6]{KPTT1}. The operator system minimal tensor product $\mathcal A \otimes_{\min} \mathcal B$ of unital $C^*$-algebras $\mathcal
A$ and $\mathcal B$ is a dense subsystem of $C^*$-minimal tensor product $\mathcal A \otimes_{\rm C^*\min} \mathcal B$ \cite[Corollary 4.10]{KPTT1}.

For operator systems $\mathcal S$ and $\mathcal T$, we put
$$D_n^{\max}(\mathcal S, \mathcal T) = \{ \alpha(P \otimes Q)
\alpha^* : P \in M_k(\mathcal S)^+, Q \in M_l(\mathcal T)^+, \alpha \in
M_{n,kl},\ k,l \in \mathbb N \}.$$ This is a matrix ordering on
$\mathcal S \otimes \mathcal T$ with order unit $1_{\mathcal S} \otimes 1_{\mathcal
T}$. Let $\{ M_n(\mathcal S \otimes_{\max} \mathcal T)^+ \}_{n=1}^\infty$ be
the Archimedeanization of the matrix ordering $\{ D_n^{\max}(\mathcal
S, \mathcal T) \}_{n=1}^\infty$. Then it can be written as
$$M_n(\mathcal S \otimes_{\max} \mathcal T)^+ = \{ X \in M_n(\mathcal S
\otimes \mathcal T) : \forall \varepsilon>0, X+\varepsilon I_n \otimes
1_{\mathcal S} \otimes 1_{\mathcal T} \in D_n^{\max}(\mathcal S, \mathcal T) \}.$$ We
call the operator system $(\mathcal S \otimes \mathcal T, \{ M_n(\mathcal S
\otimes_{\max} \mathcal T)^+ \}_{n=1}^\infty, 1_{\mathcal S} \otimes 1_{\mathcal
T})$ the {\em maximal} operator system tensor product of $\mathcal S$
and $\mathcal T$ and denote it by $\mathcal S \otimes_{\max} \mathcal T$.

The mapping $\max : \mathcal O \times \mathcal O \to \mathcal O$ sending $(\mathcal S,
\mathcal T)$ to $\mathcal S \otimes_{\max} \mathcal T$ is an associative,
symmetric and functorial operator system tensor product. The
positive cone of the maximal tensor product is the smallest among
all possible positive cones of operator system tensor products at each matrix level
\cite[Theorem 5.5]{KPTT1}. The operator system maximal tensor product $\mathcal A \otimes_{\max} \mathcal B$ of unital $C^*$-algebras $\mathcal A$ and $\mathcal B$ is a dense subsystem of $C^*$-maximal tensor product $\mathcal A \otimes_{\rm C^*\max} \mathcal B$ \cite[Theorem 5.12]{KPTT1}.

For completely positive maps $\varphi : \mathcal S \to B(H)$ and $\psi : \mathcal T \to B(H)$, let $\varphi \cdot \psi : \mathcal S \otimes \mathcal T \to B(H)$ be the map given on simple tensors by $(\varphi \cdot \psi)(x \otimes y) = \varphi (x) \psi (y)$. We put
$$
\begin{aligned}
M_n(\mathcal S \otimes_{\rm c} \mathcal T)^+ = \{ X \in M_n (\mathcal S \otimes & \mathcal T) : (\varphi \cdot \psi)_n (X) \ge 0 \text{ for all completely positive maps} \\ & \varphi : \mathcal S \to B(H), \psi : \mathcal T \to B(H) \text{ with commuting ranges}\}.
\end{aligned}
$$
Then the family $\{ M_n(\mathcal S \otimes_{\rm c} \mathcal T)^+ \}_{n=1}^\infty$ is an operator system structure on $\mathcal S \otimes \mathcal T$. We call the operator
system $(\mathcal S \otimes \mathcal T, \{ M_n(\mathcal S \otimes_{\rm c} \mathcal T) \}_{n=1}^\infty, 1_{\mathcal S} \otimes 1_{\mathcal T})$ the {\em commuting}
tensor product of $\mathcal S$ and $\mathcal T$ and denote it by $\mathcal S
\otimes_{\rm c} \mathcal T$. If $\mathcal A$ is a unital $C^*$-algebra and $\mathcal S$ is an operator system, then we have $$\mathcal A \otimes_{\rm c} \mathcal S = \mathcal A \otimes_{\max} \mathcal S$$ \cite[Theorem 6.7]{KPTT1}. Hence, the maximal tensor product and the commuting tensor product are two different means of extending the $C^*$-maximal tensor product from the category of $C^*$-algebras to operator systems.

For an inclusion $\mathcal S \subset B(H)$, we let $\mathcal S \otimes_{\rm el} \mathcal T$ be the operator system with underlying space $\mathcal S \otimes \mathcal T$ whose matrix ordering is induced by the inclusion $\mathcal S \otimes \mathcal T \subset B(H) \otimes_{\max} \mathcal T$. We call the operator system $\mathcal S \otimes_{\rm el} \mathcal T$ the {\em enveloping left} operator system tensor product of $\mathcal S$ and $\mathcal T$. The mapping ${\rm el} : \mathcal O \times \mathcal O \to \mathcal O$ sending $(\mathcal S, \mathcal T)$ to $\mathcal S \otimes_{\rm el} \mathcal T$ is a left injective functorial operator system tensor product. Here, $\mathcal S \otimes_{\rm el} \mathcal T$ is independent of the choice of any injective operator system containing $\mathcal S$ instead of $B(H)$.

Given an operator system $\mathcal S$, we call $\mathcal J \subset \mathcal S$ the kernel, provided that it is the kernel of a unital completely positive map from $\mathcal S$ to another operator system. If we define a family of positive cones $M_n(\mathcal S \slash \mathcal J)^+$ on $M_n(\mathcal S \slash \mathcal J)$ as $$M_n(\mathcal S \slash \mathcal J)^+ := \{ [x_{i,j}+\mathcal J]_{i,j} : \forall \varepsilon >0, \exists k_{i,j} \in \mathcal J,\ \varepsilon I_n \otimes 1_{\mathcal S} + [x_{i,j}+k_{i,j}]_{i,j} \in M_n(\mathcal S)^+ \},$$ then $(\mathcal S \slash \mathcal J, \{ M_n(\mathcal S \slash \mathcal J)^+ \}_{n=1}^\infty, 1_{\mathcal S \slash \mathcal J}) $ satisfies all the conditions of an operator system \cite[Proposition 3.4]{KPTT2}. We call this the quotient operator system. With this definition, the first isomorphism theorem can be proved: If $\varphi : \mathcal S \to \mathcal T$ is a unital completely positive map with $\mathcal J \subset \ker \varphi$, then the map $\widetilde \varphi : \mathcal S \slash \mathcal J \to \mathcal T$ given by $\widetilde \varphi (x+\mathcal J)=\varphi(x)$ is a unital completely positive map from Proposition 3.6 in the same paper. In particular, when $$M_n(\mathcal S \slash \mathcal J)^+ = \{ [x_{i,j}+\mathcal J]_{i,j} : \exists k_{i,j} \in \mathcal J,\ [x_{i,j}+k_{i,j}]_{i,j} \in M_n(\mathcal S)^+ \}$$ for all $n \in \mathbb N$, we call the kernel $\mathcal J$ completely order proximinal.

Since the kernel $\mathcal J$ in an operator system $\mathcal S$ is a closed subspace, the operator space structure of $\mathcal S \slash \mathcal J$ can be interpreted in two ways: first, as the operator space quotient and second, as the operator space structure induced by the operator system quotient. The two matrix norms can be different. For a specific example, see \cite[Example 4.4]{KPTT2}.

For a unital completely positive surjection $\varphi : \mathcal S \to \mathcal T$, we call $\varphi : \mathcal S \to \mathcal T$ {\sl a complete order quotient map} \cite[Definition 3.1]{H}  if for any $Q$ in $M_n(\mathcal T)^+$ and $\varepsilon>0$, we can take an element $P$ in $M_n(\mathcal S)$ so that it satisfies $$P + \varepsilon I_n \otimes 1_{\mathcal S} \in M_n(\mathcal S)^+ \quad \text{and} \quad \varphi_n(P) = Q,$$ or equivalently, if for any $Q$ in $M_n(\mathcal T)^+$ and $\varepsilon>0$, we can take a positive element $P$ in $M_n(\mathcal S)$ satisfying $$\varphi_n(P) = Q+\varepsilon I_n \otimes 1_{\mathcal S}.$$ This definition is compatible with \cite[Proposition 3.2]{FKP}: every strictly positive element lifts to a strictly positive element. An element $x \in \mathcal S$ is called {\em strictly positive} if there exists $\delta >0$ such that $x \ge \delta 1_{\mathcal S}$. The map $\varphi : \mathcal S \to \mathcal T$ is a complete order quotient map if and only if the induced map $\widetilde{\varphi} : \mathcal S \slash \ker \varphi \to \mathcal T$ is a unital complete order isomorphism. In other operator system references, this is termed a complete quotient map. To avoid confusion with complete quotient maps in operator space theory, we use the terminology of {\sl a complete order quotient map} throughout this paper. In this paper, we say that a linear map $\Phi : V \to W$ for operator spaces $V$ and $W$ is a {\sl complete quotient map} if $\Phi_n$ maps the open unit ball of $M_n(V)$ onto the open unit ball of $M_n(W)$. When $\varphi : \mathcal S \to \mathcal T$ is a complete order quotient map (respectively a complete order embedding), we will use the special type arrow as $\varphi : \mathcal S \twoheadrightarrow \mathcal T$ (respectively $\varphi : \mathcal S \hookrightarrow \mathcal T$) throughout the paper.

The normed space dual $\mathcal S^*$ of an operator system $\mathcal S$ is matrix ordered by the cones
$$
M_n(\mathcal S^*)^+ = \{ \text{completely positive maps from $\mathcal S$ to $M_n$} \},
$$
where we identify $[\varphi_{i,j}] \in M_n(\mathcal S^*)$ with the mapping $x \in \mathcal S \mapsto [\varphi_{i,j}(x)] \in M_n$.
Unfortunately, duals of operator systems fail to be operator systems in general
due to the lack of matrix order unit. When $\mathcal S$ is finite-dimensional, there exists a state $\omega_0$ on $\mathcal S$ such that $(\mathcal S^*, \{M_n(\mathcal S^*)^+\}_{n \in \mathbb N}, \omega_0)$ is an operator system \cite[Corollary 4.5]{CE}. In fact, we can show that every faithful state on $\mathcal S$ plays such a role by the compactness of $\mathcal S_{\|\cdot\|=1}^+$.

Let $f$ (respectively $g$) be a state on $M_n(\mathcal S)$ (respectively $M_n(\mathcal T)$).  We identify a subsystem $M_n \otimes \mathbb C 1_{\mathcal S}$ of $M_n(\mathcal S)$ (respectively $M_n \otimes \mathbb C 1_{\mathcal T}$ of $M_n(\mathcal T)$) with  $M_n$. We call $(f,g)$ a compatible pair whenever $f|_{M_n}=g|_{M_n}$. An operator system structure is defined on the amalgamated direct sum $\mathcal S \oplus \mathcal T \slash \langle (1_{\mathcal S},-1_{\mathcal T}) \rangle$ identifying each order unit. For $s \in M_n(\mathcal S)$ and $t \in M_n(\mathcal T)$, we define
\begin{enumerate}
\item $(s+t)^*=s^*+t^*$,
\item $s+t \ge 0$ if and only if $f(s)+g(t) \ge 0$ for all compatible pairs $(f,g)$.
\end{enumerate}
This operator system is denoted by $\mathcal S \oplus_1 \mathcal T$ and called the coproduct of operator systems $\mathcal S$ and $\mathcal T$. The canonical inclusion from $\mathcal S$ (respectively $\mathcal T$) into $\mathcal S \oplus_1 \mathcal T$ is a complete order embedding. The coproducts of operator systems satisfy the universal property: for unital completely positive maps $\varphi : \mathcal S \to \mathcal R$ and $\psi : \mathcal T \to \mathcal R$, there is a unique unital completely positive map $\Phi : \mathcal S \oplus_1 \mathcal T \to \mathcal R$ that extends both $\varphi$ and $\psi$, i.e. such that the diagram
$$\xymatrix{\mathcal S \ar@{_{(}->}[d] \ar[drr]^{\varphi} && \\ \mathcal S  \oplus_1 \mathcal T  \ar@{-->}[rr]^{\Phi}  & & \mathcal R  \\ \mathcal T \ar@{^{(}->}[u]   \ar[urr]_{\psi} & & }$$ commutes \cite[Proposition 3.3]{F}.
The coproduct $\mathcal S \oplus_1 \mathcal T$ can be realized as a quotient operator system. The map $$s + t  \in \mathcal S \oplus_1 \mathcal T \mapsto 2(s,t) +  \langle 1_{\mathcal S}, -1_{\mathcal T} \rangle \in \mathcal S \oplus \mathcal T \slash \langle 1_{\mathcal S}, -1_{\mathcal T} \rangle$$ is a unital complete order isomorphism \cite{Ka1}.

We refer to \cite{KPTT1}, \cite{KPTT2}, \cite{Ka1} and \cite{F} for general information on tensor products, quotients, duals and coproducts of operator systems.

\section{Universal operator systems $\mathfrak C_I$}

The coproduct of two operator systems can be generalized to any family of operator systems in a way parallel to \cite{F}. Suppose that $\{ \mathcal S_\iota\}_{\iota \in I}$ is a family of operator systems. We consider their algebraic direct sum $\oplus_{\iota \in I} \mathcal S_\iota$ consisting of finitely supported elements and its subspace $$N = {\rm span} \{ n_{\iota_1} - n_{\iota_2} \in \oplus_{\iota \in I} \mathcal S_\iota : \iota_1, \iota_2 \in I \},$$ where $$n_{\iota_0} (\iota) = \begin{cases} 1_{\mathcal S_{\iota_0}}, & \iota=\iota_0 \\ 0, & \text{otherwise}. \end{cases}$$ The algebraic quotient $$(\oplus_{\iota \in I} \mathcal S_\iota) \slash N$$ can be regarded as an amalgamated direct sum of $\{ \mathcal S_\iota \}_{\iota \in I}$ identifying all order units $1_{\mathcal S_\iota}$ over $\iota \in I$. We denote general elements in $M_n((\oplus_{\iota \in I} \mathcal S_\iota) \slash N)$ in brief by $$\sum_{\iota \in F} x_\iota, \qquad \text{where}~x_\iota \in M_n(\mathcal S_\iota),~ \text{$F$ is a finite subset of $I$}.$$

Let $\omega_\iota$ be a state on $M_n(\mathcal S_\iota)$ for each $\iota \in F$. We identify each subsystem $M_n \otimes \mathbb C 1_{\mathcal S_\iota}$ of $M_n(\mathcal S_\iota)$ with  $M_n$. Whenever $\omega_{\iota_1}|_{M_n} = \omega_{\iota_2}|_{M_n}$ for each $\iota_1, \iota_2 \in F$, we call the family $\{ \omega_\iota \}_{\iota \in F}$ compatible. On $M_n ((\oplus_{\iota \in I} \mathcal S_\iota) \slash N)$, we define the involution by $$(\sum_{\iota \in F} x_\iota)^* = \sum_{\iota \in F} x_\iota^*$$ and the positive cone as $$\sum_{\iota \in F} x_\iota \in M_n ((\oplus_{\iota \in I} \mathcal S_\iota) \slash N)^+ \quad \text{iff} \quad \sum_{\iota \in F} \omega_\iota (x_\iota) \ge 0$$ for any compatible family $\{ \omega_\iota \}_{\iota \in F}$ of states. The triple $$((\oplus_{\iota \in I} \mathcal S_\iota) \slash N, \{ M_n ((\oplus_{\iota \in I} \mathcal S_\iota) \slash N)^+ \}_{n \in \mathbb N}, n_\iota +N)$$ is denoted by $\oplus_1 \{ \mathcal S_\iota : \iota \in I \}$ and called the coproduct of operator systems $\{ \mathcal S_\iota \}_{\iota \in I}$. The following is an immediate generalization of the results on the coproduct of two operator systems studied in \cite{F} to any family of operator systems.

\begin{prop}\label{coproduct}
Suppose that  $\{ \mathcal S_\iota \}_{\iota \in I}$ (respectively $\{ \mathcal A_\iota \}_{\iota \in I}$) is a family of operator systems (respectively unital $C^*$-algebras). Then
\begin{enumerate}
\item[(i)] $\oplus_1 \{ \mathcal S_\iota : \iota \in I \}$ is an operator system;
\item[(ii)] for any subset $J \subset I$, the inclusion $$\oplus_1 \{ \mathcal S_\iota : \iota \in J \} \subset \oplus_1 \{ \mathcal S_\iota : \iota \in I \}$$ is completely order isomorphic;
\item[(iii)] for unital completely positive maps $\varphi_\iota : \mathcal S_\iota \to \mathcal R$, there exists unique unital completely positive map $\Phi : \oplus_1 \{ \mathcal S_\iota : \iota \in I \} \to \mathcal R$ which extends all $\varphi_\iota$, i.e. such that the diagram $$\xymatrix{\mathcal S_{\iota} \ar@{_{(}->}[d] \ar[drr]^{\varphi_{\iota}} && \\ \oplus_1 \{ \mathcal S_\iota : \iota \in I \} \ar@{-->}[rr]^{\Phi}  & & \mathcal R}$$ commutes;
\item[(iv)] we have $\sum_{\iota \in F} x_\iota \in M_n(\oplus_1 \{ \mathcal S_\iota : \iota \in I \})^+$ if and only if there exist $\alpha_{\iota} \in M_n$ for $\iota \in F$ such that $$\sum_{\iota \in F} \alpha_\iota = 0 \quad \text{and} \quad x_\iota + \alpha_\iota \otimes 1_{\mathcal S_\iota} \in M_n(\mathcal S_\iota)^+;$$
\item[(v)] the coproduct $\oplus_1 \{ \mathcal A_\iota : \iota \in I \}$ is an operator subsystem of the unital $C^*$-algebra free product $\ast_{\iota \in I} \mathcal A_\iota$.
\end{enumerate}
\end{prop}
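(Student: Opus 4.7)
The plan is to follow the two-operator-system case established in \cite{F} and \cite{Ka1}, adding reductions to handle finite subsets $F \subset I$ and then the (possibly infinite) general $I$. Parts (ii) and (iv) are the technical core; (i), (iii), and (v) follow cleanly from them.

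First, for (ii), I would argue directly from the compatible-family definition: given $J \subset I$ and a compatible family $\{\omega_\iota\}_{\iota \in F}$ on $\{M_n(\mathcal S_\iota)\}_{\iota \in F}$ with $F \subset J$ and common restriction $\lambda$ on $M_n$, extend by Arveson, for each $\iota \in I \setminus J$, the state $\lambda$ on $M_n \otimes \mathbb C 1_{\mathcal S_\iota} \cong M_n$ to a state $\omega_\iota$ on $M_n(\mathcal S_\iota)$; the enlarged family is compatible. Combined with the trivial restriction in the other direction, this shows the positivity cones on $\oplus_1 \{\mathcal S_\iota : \iota \in J\}$ and $\oplus_1 \{\mathcal S_\iota : \iota \in I\}$ agree on finite-support elements, yielding (ii) and reducing positivity questions to finite $F$.

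Next, for (iv), sufficiency is the direct calculation
\[
\sum_\iota \omega_\iota(x_\iota) = \sum_\iota \omega_\iota(x_\iota + \alpha_\iota \otimes 1_{\mathcal S_\iota}) - \lambda\Bigl(\sum_\iota \alpha_\iota\Bigr) = \sum_\iota \omega_\iota(x_\iota + \alpha_\iota \otimes 1_{\mathcal S_\iota}) \geq 0,
\]
valid for any compatible family with common restriction $\lambda$. For necessity, (ii) reduces to $I = F$ finite. The base case $|F| = 2$ is precisely the two-factor characterization of \cite[Proposition 3.3]{F}, equivalently the quotient description $\mathcal S \oplus_1 \mathcal T \cong \mathcal S \oplus \mathcal T / \langle 1_{\mathcal S}, -1_{\mathcal T} \rangle$ in \cite{Ka1}. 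The inductive step from $k$ to $k+1$ uses associativity of $\oplus_1$: the two-factor case produces $\alpha \in M_n$ with witnesses for $\sum_{\iota \neq \iota_0} x_\iota$ in the $k$-fold coproduct and for $x_{\iota_0}$; unfolding $\alpha \otimes 1$ inside the $k$-fold coproduct as $\sum_{\iota \neq \iota_0} \alpha_\iota' \otimes 1_{\mathcal S_\iota}$ with $\sum \alpha_\iota' = \alpha$ and applying the induction hypothesis assembles a full witness $(\alpha_\iota)_{\iota \in F}$ with $\sum_\iota \alpha_\iota = 0$.

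With (iv) in hand, (i) is routine: involution-stability and the Archimedean condition are direct from the compatible-family definition; the order-unit property follows from choosing $r_\iota > 0$ with $x_\iota + r_\iota 1_{\mathcal S_\iota} \geq 0$ and noting $\sum r_\iota n_\iota \equiv R \cdot (n_\iota + N)$ modulo $N$, where $R = \sum r_\iota$; properness comes from applying (iv) to both $\sum x_\iota$ and $-\sum x_\iota$. For (iii), $\Phi(\sum x_\iota) := \sum \varphi_\iota(x_\iota)$ is unital and well-defined, with complete positivity verified by testing against an arbitrary state $\sigma$ on $M_n(\mathcal R)$: the family $\omega_\iota := \sigma \circ (\varphi_\iota)_n$ is compatible (common restriction $\sigma|_{M_n \otimes 1_{\mathcal R}}$), so $\sigma(\Phi_n(\sum x_\iota)) = \sum_\iota \omega_\iota(x_\iota) \geq 0$. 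For (v), (iii) applied to the canonical inclusions gives a UCP map $\Phi : \oplus_1 \mathcal A_\iota \to \ast_{\iota \in I} \mathcal A_\iota$; to see $\Phi$ is a complete order embedding, invoke Boca's amalgamated free product theorem for UCP maps: any compatible family $\{\varphi_\iota : \mathcal A_\iota \to B(H)\}$ sharing unit $I_H$ dilates to a UCP on $\ast_{\iota \in I} \mathcal A_\iota$ extending each $\varphi_\iota$, so every UCP out of $\oplus_1 \mathcal A_\iota$ factors through $\Phi$.

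The main obstacle is the inductive step in (iv), specifically tracking how the witness $\alpha$ from the outer two-factor case distributes as $(\alpha_\iota')_{\iota \neq \iota_0}$ in the inner coproduct and combines with the inductive witnesses so that the global sum-zero constraint $\sum_{\iota \in F} \alpha_\iota = 0$ holds. Applying Boca's theorem correctly in (v), in the generality of infinite $I$, is a secondary technical point that could be handled either by direct appeal to the general statement or by passing through finite subsets via (ii) and the universal property of the free product.
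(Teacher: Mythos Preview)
The paper gives no proof of this proposition at all; it simply states that it ``is an immediate generalization of the results on the coproduct of two operator systems studied in \cite{F} to any family of operator systems.'' Your proposal is therefore not competing with a proof in the paper but rather supplying one, and your outline is essentially correct and follows the natural route: reduce to finite index sets, then induct on $|F|$ using the two-factor case from \cite{F}/\cite{Ka1}.

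Two minor remarks. First, your argument for (ii) is slightly overcooked: since the positivity condition on $\sum_{\iota\in F} x_\iota$ involves only compatible families indexed by the finite support $F$, it is literally the same condition in the $J$-coproduct and the $I$-coproduct; no extension to $I\setminus J$ is needed. Second, in the inductive step for (iv) you invoke associativity $\oplus_1\{\mathcal S_\iota:\iota\in F\}\cong(\oplus_1\{\mathcal S_\iota:\iota\neq\iota_0\})\oplus_1\mathcal S_{\iota_0}$; this is straightforward from the compatible-family definition but should be stated, since the two-factor characterization you cite applies to the right-hand side. Your properness argument via (iv) also works but needs one extra line: from $x_\iota+\alpha_\iota\otimes 1\ge 0$ and $-x_\iota+\beta_\iota\otimes 1\ge 0$ with $\sum\alpha_\iota=\sum\beta_\iota=0$ one gets $\alpha_\iota+\beta_\iota\ge 0$ with zero sum, hence each $\alpha_\iota+\beta_\iota=0$, forcing $x_\iota=-\alpha_\iota\otimes 1_{\mathcal S_\iota}$ and thus $(x_\iota)_\iota\in M_n(N)$.
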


The following is an immediate generalization of \cite[Proposition 4.7]{Ka2} to any finite family of operator systems.

 \begin{prop}\label{coproduct-quotient}
Suppose that $\mathcal S_1, \cdots, \mathcal S_n$ are operator systems and $$N= {\rm span} \{ n_i - n_j \in \mathcal S_1 \oplus \cdots \oplus \mathcal S_n : 1 \le i,j \le n \} \qquad (n_i(j) = \delta_{i,j} 1_{\mathcal S_i}).$$ Then, $N$ is a kernel in $\mathcal S_1 \oplus \cdots \oplus \mathcal S_n$ and the map $$\sum_{i=1}^n x_i \in \mathcal S_1 \oplus_1 \cdots \oplus_1 \mathcal S_n \mapsto n(x_i) + N \in \mathcal S_1 \oplus \cdots \oplus \mathcal S_n \slash N$$ is a unital complete order isomorphism.
\end{prop}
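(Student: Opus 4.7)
The plan is to produce mutually inverse unital completely positive maps between $\oplus_1\{\mathcal S_i : 1 \le i \le n\}$ and $(\mathcal S_1\oplus\cdots\oplus\mathcal S_n)/N$ by invoking the universal property of the coproduct (Proposition~\ref{coproduct}(iii)) in one direction and the first isomorphism theorem for operator system quotients in the other. The entire argument is driven by the order-unit identity
\[
n\cdot n_i - 1_D \;=\; \sum_{j=1}^{n}(n_i - n_j) \;\in\; N, \qquad D := \mathcal S_1\oplus\cdots\oplus\mathcal S_n,
\]
valid for each $i$, which shows $n\cdot n_i + N = 1_D + N$ in the vector-space quotient $D/N$ and explains the factor $n$ in the claimed isomorphism.

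I first show that $N$ is a kernel by exhibiting a unital completely positive map whose kernel is exactly $N$. Let $\pi_i:D\twoheadrightarrow\mathcal S_i$ denote the coordinate projections and $k_i:\mathcal S_i\hookrightarrow\oplus_1\{\mathcal S_i\}$ the canonical coproduct embeddings (unital completely positive by Proposition~\ref{coproduct}(ii)). The convex combination
\[
\Psi \;:=\; \frac{1}{n}\sum_{i=1}^{n} k_i\circ\pi_i : D \longrightarrow \oplus_1\{\mathcal S_i\}
\]
is unital completely positive. Since $k_i(1_{\mathcal S_i})=1_{\oplus_1}$ for every $i$, one checks $\Psi(n_i-n_j)=0$, so $N\subseteq\ker\Psi$; conversely, $\Psi((x_1,\ldots,x_n))=0$ means $\sum_i k_i(x_i)=0$ in the coproduct, and because the coproduct is literally $D/N$ as a vector space, this forces $(x_1,\ldots,x_n)\in N$. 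Hence $N=\ker\Psi$ is a kernel, and by the first isomorphism theorem $\Psi$ descends to a unital completely positive map $\widetilde\Psi:D/N\to\oplus_1\{\mathcal S_i\}$ sending $(x_1,\ldots,x_n)+N$ to $\frac{1}{n}\sum_i k_i(x_i)$.

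For the other direction, let $j_i:\mathcal S_i\hookrightarrow D$ be the coordinate inclusions and $q:D\twoheadrightarrow D/N$ the operator system quotient map, and define $\mu_i:\mathcal S_i\to D/N$ by $\mu_i(x) = n\cdot q(j_i(x))$. The order-unit identity gives $\mu_i(1_{\mathcal S_i}) = n\cdot n_i + N = 1_{D/N}$, so each $\mu_i$ is unital completely positive, and Proposition~\ref{coproduct}(iii) yields a unique unital completely positive map $\Phi:\oplus_1\{\mathcal S_i\}\to D/N$ satisfying $\Phi(\sum_i x_i)=n(x_1,\ldots,x_n)+N$, exactly the map in the statement. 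A direct computation on the generators $k_i(x)$ shows $\widetilde\Psi\circ\Phi$ and $\Phi\circ\widetilde\Psi$ are the respective identities, the factor $n$ from $\Phi$ being cancelled by the factor $\tfrac{1}{n}$ in $\widetilde\Psi$. The only delicate point is the bookkeeping that distinguishes the coset $n_i+N$ from the order unit $1_D+N$; once this is in hand, the entire argument is a clean interplay of the two universal properties and does not require any direct manipulation of the compatible-state cones used to define the coproduct.
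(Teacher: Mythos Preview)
Your argument is correct. The paper itself gives no proof of this proposition; it merely records it as ``an immediate generalization of \cite[Proposition 4.7]{Ka2}'' for finitely many summands and moves on. Your write-up supplies a self-contained proof, and the strategy---manufacture a ucp map $\Psi$ whose kernel is exactly $N$, then pair the induced map $\widetilde\Psi$ with the map $\Phi$ coming from the universal property of the coproduct---is the natural one. The key observations are exactly the ones you isolate: the vector-space identity $n\,n_i - 1_D\in N$, and the fact that the coproduct is already $D/N$ as a vector space (so the computation $\ker\Psi=N$ is immediate). One small remark: when you invoke Proposition~\ref{coproduct}(iii) to build $\Phi$, you implicitly use that each $\mu_i = n\cdot q\circ j_i$ is completely positive; this is clear because $j_i$ is (non-unital) completely positive and $q$ is unital completely positive, but it is worth saying explicitly since $j_i$ itself is not unital. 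With that noted, the two ucp maps are genuine inverses and the conclusion follows.

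Compared with a bare-hands verification via Proposition~\ref{coproduct}(iv) and the Archimedean description of the quotient cone, your approach is cleaner: it sidesteps the $\varepsilon$-bookkeeping in the definition of $M_n(D/N)^+$ by letting the first isomorphism theorem and the universal property do the work.
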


Suppose that $I$ is an index set and $\{ I_k \}_{k \in \mathbb N}$ is a sequence of index sets having the same cardinality as $I$. Let $M_k(C([0,1]))_{\iota_k}$ denote the copy of $M_k(C([0,1]))$ for each index $\iota_k \in I_k$. We denote the copy of $1 \in C([0,1])$ (respectively $t \in C([0,1])$) in $M_k(C([0,1])_{\iota_k}$ by $1_{\iota_k}$ (respectively $t_{\iota_k}$). For each $\iota_k \in I_k$, we let $\mathfrak C_{\iota_k}$ be an operator subsystem of $M_k(C([0,1])_{\iota_k}$ generated by $$\{ e_{ij} \otimes 1_{\iota_k} : 1 \le i,j \le k \} \quad \text{and} \quad \{ e_{ij} \otimes t_{\iota_k} :  1 \le i,j \le k \}.$$ We define the operator system $\mathfrak C_I$ as the coproduct $$\oplus_1 \{ \mathfrak C_{\iota_k} : k \in \mathbb N, \iota_k \in I_k \}.$$ The operator system $\mathfrak C_I$ depends only on the cardinality of the index set $I$.

\begin{prop}\label{matrix-coproduct}
The operator system $\mathfrak C_I$ is unitally completely order isomorphic to the coproduct  $$\oplus_1 \{ (M_k \oplus M_k)_{\iota_k} : k \in \mathbb N, \iota_k \in I_k \}.$$
\end{prop}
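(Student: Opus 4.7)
The plan is to reduce the statement to the corresponding identification of each individual summand, then invoke the universal property of the coproduct. More precisely, first I would show that each $\mathfrak C_{\iota_k}$ is unitally completely order isomorphic to $M_k \oplus M_k$ via evaluation of affine $M_k$-valued functions at the two endpoints $0$ and $1$ of $[0,1]$. Once this local identification is in hand, the universal property of the coproduct (Proposition \ref{coproduct}(iii)) assembles these into the desired isomorphism of coproducts.

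For the local step, observe that any element of $\mathfrak C_{\iota_k}$ can be written in the form $A \otimes 1_{\iota_k} + B \otimes t_{\iota_k}$ with $A, B \in M_k$, i.e.\ as an affine $M_k$-valued function on $[0,1]$. Define
$$\Psi_{\iota_k} : \mathfrak C_{\iota_k} \to M_k \oplus M_k, \quad A \otimes 1_{\iota_k} + B \otimes t_{\iota_k} \mapsto (A, A+B).$$
This map is unital and linear with algebraic inverse $(X,Y) \mapsto X \otimes 1_{\iota_k} + (Y-X) \otimes t_{\iota_k}$. Since $M_n(\mathfrak C_{\iota_k})$ inherits its order from $M_{nk}(C([0,1]))$ with pointwise positivity, $\Psi_{\iota_k}$ is completely positive. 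Conversely, if $[f_{ij}]$ is an affine $M_{nk}$-valued function on $[0,1]$ with positive endpoint values $[f_{ij}(0)]$ and $[f_{ij}(1)]$, then for every $t \in [0,1]$ we have the convex combination
$$[f_{ij}(t)] = (1-t)[f_{ij}(0)] + t [f_{ij}(1)] \ge 0,$$
so $[f_{ij}] \in M_n(\mathfrak C_{\iota_k})^+$. This shows $\Psi_{\iota_k}$ is a unital complete order isomorphism.

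Now I would apply Proposition \ref{coproduct}(iii) twice. Composing each $\Psi_{\iota_k}$ with the canonical complete order embedding of $(M_k \oplus M_k)_{\iota_k}$ into the coproduct $\oplus_1 \{ (M_k \oplus M_k)_{\iota_k} : k \in \mathbb N, \iota_k \in I_k \}$ gives a compatible family of unital completely positive maps, which induces a unique unital completely positive map
$$\Psi : \oplus_1 \{ \mathfrak C_{\iota_k} : k \in \mathbb N, \iota_k \in I_k \} \longrightarrow \oplus_1 \{ (M_k \oplus M_k)_{\iota_k} : k \in \mathbb N, \iota_k \in I_k \}.$$
The same procedure applied to the inverses $\Psi_{\iota_k}^{-1}$ yields a unital completely positive map in the opposite direction. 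The uniqueness part of the universal property forces the two compositions to agree with the identity on each summand, hence to be the identity maps. Therefore $\Psi$ is a unital complete order isomorphism.

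There is no real obstacle here: the only substantive content is the convexity observation that a matrix of affine functions on $[0,1]$ is positive at every point as soon as its endpoint values are positive, and this is precisely what justifies defining $\mathfrak C_I$ as the coproduct of the spaces $\mathfrak C_{\iota_k}$ rather than directly of the algebras $M_k \oplus M_k$; everything else is a formal consequence of functoriality.
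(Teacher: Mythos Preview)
Your proof is correct and follows essentially the same approach as the paper: both reduce to showing that each $\mathfrak C_{\iota_k}$ is unitally completely order isomorphic to $M_k \oplus M_k$ via the map $A \otimes 1_{\iota_k} + B \otimes t_{\iota_k} \mapsto (A, A+B)$, using the affineness of $t \mapsto A + tB$ to verify positivity from the endpoint values. The paper simply asserts that this local identification suffices, whereas you spell out the gluing step via the universal property of the coproduct; this is a harmless elaboration of the same argument.
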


\begin{proof}
It is sufficient to show that each $\mathfrak C_{\iota_k}$ is unitally completely order isomorphic to the direct sum $M_k \oplus M_k$. For $\alpha, \beta \in M_{nk}$, we have
\begin{enumerate}
\item[] $\alpha \otimes 1_{\iota_k} +\beta \otimes t_{\iota_k}$ is positive in $M_n(\mathfrak C_{\iota_k})$
\item[$\Leftrightarrow$] $\alpha \otimes 1 +\beta \otimes t$ is positive in $M_n(M_k(C([0,1])))$
\item[$\Leftrightarrow$] $\forall t \in [0,1], f(t)=\alpha +t\beta \in M_{nk}^+$ (since $M_n(M_k(C([0,1]))) \simeq C([0,1],M_{nk})$)
\item[$\Leftrightarrow$] $\alpha, \alpha + \beta \in M_{nk}^+$ (because $f$ is affine).
\end{enumerate}
Hence, the mapping $$\alpha \otimes 1_{\iota_k} + \beta \otimes t_{\iota_k} \in \mathfrak C_{\iota_k} \mapsto (\alpha , \alpha + \beta) \in M_k \oplus M_k, \qquad \alpha, \beta \in M_k$$ is a unital complete order isomorphism.
\end{proof}

A $C^*$-cover $(\mathcal A, \iota)$ of an operator system $\mathcal S$ is a unital $C^*$-algebra $\mathcal A$ with a unital complete order embedding $\iota : \mathcal S \hookrightarrow \mathcal A$ such that $\iota (\mathcal S)$ generates $\mathcal A$ as a $C^*$-algebra. The enveloping $C^*$-algebra $C^*_e(\mathcal S)$ is a $C^*$-cover of $\mathcal S$ satisfying the universal minimal property: for any $C^*$-cover $\iota : \mathcal S \hookrightarrow \mathcal A$, there is a unique unital $*$-homomorphism $$\pi : \mathcal A \to C^*_e(\mathcal S)$$ such that $\pi(\iota (x))=x$ for all $x \in \mathcal S$ \cite{Ham}.

Let $\mathcal S$ be an operator subsystem of $\mathcal T$. We say that $\mathcal S$ is relatively weakly injective in $\mathcal T$ if $$\mathcal S \otimes_{\rm c} \mathcal R \hookrightarrow \mathcal T \otimes_{\rm c} \mathcal R$$
for any operator system $\mathcal R$. The following are equivalent \cite[Theorem 4.1]{Bh}:
\begin{enumerate}
\item[(i)] $\mathcal S$ is relatively weakly injective in $\mathcal T$;
\item[(ii)] $\mathcal S \otimes_{\rm c} C^*(\mathbb F_\infty) \hookrightarrow \mathcal T \otimes_{\rm c} C^*(\mathbb F_\infty)$;
\item[(iii)] for any unital completely positive map $\varphi : \mathcal S \to B(H)$, there exists a unital completely positive map $\Phi : \mathcal T \to \varphi(\mathcal S)''$ such that $\Phi|_{\mathcal S} = \varphi$.
\end{enumerate}

\begin{thm}\label{cover}
Suppose that $I$ is an index set and $\{ I_k \}_{k \in \mathbb N}$ is a sequence of index sets having the same cardinality as $I$. Then,
\begin{enumerate}
\item[(i)] the unital $C^*$-algebra free product  $$\ast_{k \in \mathbb N, \iota_k \in I_k} M_k(C([0,1]))_{\iota_k}$$ is a $C^*$-cover of $\mathfrak C_I$;
\item[(ii)] the unital $C^*$-algebra free product $$\ast_{k \in \mathbb N, \iota_k \in I_k} (M_k \oplus M_k)_{\iota_k}$$ is a $C^*$-envelope of $\mathfrak C_I$;
\item[(iii)] for a unital $C^*$-algebra $\mathcal A$, every unital completely positive map $\varphi : \mathfrak C_I \to \mathcal A$ has completely positive extensions $$\Phi : \ast_{k \in \mathbb N, \iota_k \in I_k} M_k(C([0,1]))_{\iota_k} \to \mathcal A \quad \text{and} \quad \Psi : \ast_{k \in \mathbb N, \iota_k \in I_k} (M_k \oplus M_k)_{\iota_k} \to \mathcal A;$$
\item[(iv)] $\mathfrak C_I$ is relatively weakly injective in both $$\ast_{k \in \mathbb N, \iota_k \in I_k} M_k(C([0,1]))_{\iota_k} \quad \text{and} \quad \ast_{k \in \mathbb N, \iota_k \in I_k} (M_k \oplus M_k)_{\iota_k}.$$
\end{enumerate}
\end{thm}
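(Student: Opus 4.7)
The plan is to address (i), (iii), (iv), (ii) in that order: part (iii) carries the bulk of the argument, (iv) follows at once from it, and (ii) depends on a separate identification of $C^*$-envelopes of coproducts. For (i), the $*$-subalgebra of $M_k(C([0,1]))_{\iota_k}$ generated by $\mathfrak C_{\iota_k}$ contains all $e_{ij} \otimes p(t)_{\iota_k}$ with $p \in \mathbb C[t]$; by Stone--Weierstrass these are dense, so each $\mathfrak C_{\iota_k}$ generates $M_k(C([0,1]))_{\iota_k}$ as a $C^*$-algebra. Applying Proposition~\ref{coproduct}(iv) to the unital complete order embeddings $\mathfrak C_{\iota_k} \hookrightarrow M_k(C([0,1]))_{\iota_k}$ yields a complete order embedding $\mathfrak C_I \hookrightarrow \oplus_1 \{ M_k(C([0,1]))_{\iota_k} \}$, and Proposition~\ref{coproduct}(v) lands this in the unital free product, producing the $C^*$-cover required in (i).

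For (iii), the key observation is that $\mathfrak C_{\iota_k}$ is a unital completely positive retract of $M_k(C([0,1]))_{\iota_k}$: point evaluation $e_{\iota_k}(f) = (f(0), f(1))$ is a unital $*$-homomorphism $M_k(C([0,1]))_{\iota_k} \to M_k \oplus M_k$ which, under the identification of Proposition~\ref{matrix-coproduct}, restricts to the identity on $\mathfrak C_{\iota_k}$. Given a unital completely positive $\varphi : \mathfrak C_I \to \mathcal A$, set $\varphi_{\iota_k} := \varphi|_{\mathfrak C_{\iota_k}}$ and $\psi_{\iota_k} := \varphi_{\iota_k} \circ e_{\iota_k}$. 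Boca's theorem on free products of unital completely positive maps into a common unital $C^*$-algebra target then assembles the families $\{\varphi_{\iota_k}\}$ and $\{\psi_{\iota_k}\}$ into unital completely positive maps $\Psi : \ast (M_k \oplus M_k)_{\iota_k} \to \mathcal A$ and $\Phi : \ast M_k(C([0,1]))_{\iota_k} \to \mathcal A$, and the universal property of the coproduct (Proposition~\ref{coproduct}(iii)) forces $\Phi|_{\mathfrak C_I} = \Psi|_{\mathfrak C_I} = \varphi$. Part (iv) then follows immediately: for unital completely positive $\varphi : \mathfrak C_I \to B(H)$, the bicommutant $\varphi(\mathfrak C_I)''$ is a unital $C^*$-algebra, so (iii) applied with $\mathcal A = \varphi(\mathfrak C_I)''$ delivers the extensions demanded by characterization (iii) of relative weak injectivity.

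For (ii), the same scheme as in (i), now with $(M_k \oplus M_k)_{\iota_k}$ in place of $M_k(C([0,1]))_{\iota_k}$ and Proposition~\ref{matrix-coproduct} furnishing the complete order embedding, shows that $\ast (M_k \oplus M_k)_{\iota_k}$ is a $C^*$-cover of $\mathfrak C_I$. To upgrade this to the $C^*$-envelope, I plan to invoke the identity $C^*_e(\oplus_1 \{ \mathcal S_\iota \}) = \ast_\iota C^*_e(\mathcal S_\iota)$; combined with $C^*_e(\mathfrak C_{\iota_k}) = M_k \oplus M_k$ this identifies the envelope. The main obstacle is precisely this identity: the two-factor case appears in [F], but extending to an arbitrary index set requires a directed-union argument based on Proposition~\ref{coproduct}(ii) and the compatibility of both the unital $C^*$-algebra free product and the $C^*$-envelope functor with inductive limits over finite subsets of $I$.
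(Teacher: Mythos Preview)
Your treatment of (i), (iii), and (iv) matches the paper's almost line for line: Stone--Weierstrass plus Proposition~\ref{coproduct}(iv),(v) for (i); a unital completely positive retraction $M_k(C([0,1]))_{\iota_k}\to\mathfrak C_{\iota_k}$ combined with Boca's free-product theorem for (iii); and the bicommutant criterion for (iv). The paper obtains the retraction abstractly from injectivity of $M_k\oplus M_k$, whereas you write it down explicitly as evaluation at the endpoints, but this is the same map.

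For (ii) the two arguments diverge. The paper gives a direct, self-contained proof: given an arbitrary $C^*$-cover $\mathcal A$ of $\mathfrak C_I$, it extends the inclusion $\mathfrak C_I\hookrightarrow \ast(M_k\oplus M_k)_{\iota_k}$ to a unital completely positive map $\rho:\mathcal A\to B(K)$, takes a minimal Stinespring dilation, and observes that since every unitary of $(M_k\oplus M_k)_{\iota_k}$ already has norm~$1$ inside $\mathfrak C_I$, the off-diagonal blocks in the dilation vanish on these unitaries. As unitaries span each $M_k\oplus M_k$ and hence generate $\mathcal A$, the map $\rho$ is multiplicative, giving the required quotient onto the free product. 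Your route instead invokes the identity $C^*_e(\oplus_1\mathcal S_\iota)=\ast_\iota C^*_e(\mathcal S_\iota)$ and an inductive-limit argument. The directed-union step can indeed be made to work (uniqueness of the $*$-homomorphism onto the envelope makes the finite-stage maps coherent), but the base case you attribute to \cite{F} is not established there: Fritz proves the complete order embedding into the free product (your Proposition~\ref{coproduct}(v)), not that this free product is the envelope. Proving even the two-factor case $C^*_e(\mathcal A\oplus_1\mathcal B)=\mathcal A\ast\mathcal B$ for unital $C^*$-algebras requires precisely the unitary/Stinespring rigidity argument the paper carries out directly, so your approach ultimately defers the essential content rather than avoiding it.
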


\begin{proof}
(i) By Proposition \ref{coproduct} (iv), (v), we have
\begin{enumerate}
\item[] $\sum_{\iota_k \in F} x_{\iota_k} \in M_n(\mathfrak C_I)^+$
\item[$\Leftrightarrow$] $\exists \alpha_{\iota_k} \in M_n$, $\sum_{\iota_k \in F} \alpha_{\iota_k} = 0$ and $x_{\iota_k} +\alpha_{\iota_k} \otimes 1_{\mathfrak C_{\iota_k}} \in M_n(\mathfrak C_{\iota_k})^+$
\item[$\Leftrightarrow$] $\exists \alpha_{\iota_k} \in M_n$, $\sum_{\iota_k \in F} \alpha_{\iota_k} = 0$ and $x_{\iota_k} +\alpha_{\iota_k} \otimes 1_{\mathfrak C_{\iota_k}} \in M_n(M_k(C([0,1]))_{\iota_k})^+$
\item[$\Leftrightarrow$] $\sum_{\iota_k \in F} x_{\iota_k} \in M_n(\oplus_1 \{ M_k(C([0,1]))_{\iota_k} : k \in \mathbb N, \iota_k \in I_k \})^+$
\item[$\Leftrightarrow$]  $\sum_{\iota_k \in F} x_{\iota_k} \in M_n(\ast_{k \in \mathbb N, \iota_k \in I_k} M_k(C([0,1]))_{\iota_k})^+$.
\end{enumerate}
Hence, $\mathfrak C_I$ is an operator subsystem of $\ast_{k \in \mathbb N, \iota_k \in I_k} M_k(C([0,1]))_{\iota_k}$. By the Weierstrass approximation theorem, each $\mathfrak C_{\iota_k}$ generates $ M_k(C([0,1]))_{\iota_k}$ as a $C^*$-algebra. Hence, $\ast_{k \in \mathbb N, \iota_k \in I_k} M_k(C([0,1]))_{\iota_k}$ is a $C^*$-cover of $\mathfrak C_I$.

(ii) The proof is motivated by \cite[Theorem 2.6]{FP}. Suppose that $$\mathfrak C_I \subset B(H) \qquad \text{and} \qquad \ast_{k \in \mathbb N, \iota_k \in I_k} (M_k \oplus M_k)_{\iota_k} \subset B(K).$$ Let $\mathcal A$ be a $C^*$-algebra generated by $\mathfrak C_I$ in $B(H)$. By the Arveson extension theorem, the canonical inclusion from $\mathfrak C_I =\oplus_1 \{ (M_k \oplus M_k)_{\iota_k} : k \in \mathbb N, \iota_k \in I_k \}$ into $\ast_{k \in \mathbb N, \iota_k \in I_k} (M_k \oplus M_k)_{\iota_k}$ extends to a unital completely positive map $\rho : \mathcal A \to B(K)$. Then letting $\rho =V^* \pi (\cdot) V$ be a minimal Stinespring decomposition of $\rho$ for a $*$-representation $\pi : \mathcal A \to B(\widehat{K})$ and an isometry $V : K \to \widehat{K}$, we have the commutative diagram $$\xymatrix{\mathcal A \ar[rr]^{\pi} \ar[drr]^{\rho} && B(\widehat{K}) \ar[d]^{V^* \cdot\ V} \\ \mathfrak C_I \ar@{^{(}->}[u] \ar@{}|-*[@]{\subset}[r] & \ast_{k \in \mathbb N, \iota_k \in I_k} (M_k \oplus M_k)_{\iota_k} \ar@{}|-*[@]{\subset}[r] & B(K).}$$

For a unitary matrix $U$ in $(M_k \oplus M_k)_{\iota_k}$ ($U$ need not be unitary in $\mathcal A$), we can write $\pi (U)$ in the operator matrix form $$\pi (U)=\begin{pmatrix} U & B \\ C & D \end{pmatrix}.$$ Since $U$ is unitary in $B(K)$ and $$1 = \|U\| \le \| \begin{pmatrix} U & B \\ C & D \end{pmatrix} \| = \|\pi(U) \| \le 1,$$ we have $B=0=C$ by the $C^*$-axiom. It follows that $\rho$ is multiplicative on $$\mathcal U := \{ U \in (U(k) \oplus U(k))_{\iota_k} : k \in \mathbb N, \iota_k \in I_k \}.$$ By the spectral theorem, every matrix can be written as a linear combination of unitary matrices. It follows that the set $\mathcal U$ generates $\mathcal A$ as a $C^*$-algebra. We can regard $\rho$ as a surjective $*$-homomorphism from $\mathcal A$ onto $\ast_{k \in \mathbb N, \iota_k \in I_k} (M_k \oplus M_k)_{\iota_k}$. Hence, $\ast_{k \in \mathbb N, \iota_k \in I_k} (M_k \oplus M_k)_{\iota_k}$ is the universal quotient of all $C^*$-algebras generated by $\mathfrak C_I$.

(iii) Since each $\mathfrak C_{\iota_k}$ is unitally completely order isomorphic to $M_k \oplus M_k$ which is injective, there exists a unital completely positive projection $$P_{\iota_k} : M_k(C([0,1]))_{\iota_k} \to \mathfrak C_{\iota_k}.$$ By \cite[Theorem 3.1]{B}, the unital free products $$\ast_{k \in \mathbb N, \iota_k \in I_k} (\varphi|_{\mathfrak C_{\iota_k}} \circ P_{\iota_k}) : \ast_{k \in \mathbb N, \iota_k \in I_k} M_k(C([0,1]))_{\iota_k} \to \mathcal A$$
and
$$\ast_{k \in \mathbb N, \iota_k \in I_k} \varphi|_{\mathfrak C_{\iota_k}} : \ast_{k \in \mathbb N, \iota_k \in I_k} (M_k \oplus M_k)_{\iota_k} \to \mathcal A$$ are completely positive extensions of $\varphi$.

(iv) Let $\varphi : \mathfrak C_I \to B(H)$ be a unital completely positive map. The double commutant $\varphi (\mathcal S)''$ of its range is a $C^*$-algebra. The relative weak injectivity follows from (iii) and \cite[Theorem 4.1]{Bh}.
\end{proof}

\begin{thm}\label{universal}
Suppose that $\mathcal S$ is an operator system and $\mathcal S^+_{\|\cdot\| \le 1}$ is indexed by a set $I$. Then, $\mathcal S$ is an operator system quotient of $\mathfrak C_I$. Furthermore, the kernel is completely order proximinal and every positive element $x \in M_k(\mathcal S)$ can be lifted to a positive element $\tilde{x} \in M_k(\mathfrak C_I)$ with $\|\tilde{x}\| \le k^2 \|x\|$.
\end{thm}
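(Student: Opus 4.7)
The plan is to construct a UCP surjection $\Phi : \mathfrak C_I \to \mathcal S$ that positively lifts every element of $M_k(\mathcal S)^+$ to $M_k(\mathfrak C_I)^+$ with norm blow-up at most $k^2$. Using Propositions \ref{coproduct}(iii) and \ref{matrix-coproduct}, it suffices to define a UCP map on each coproduct summand $\mathfrak C_{\iota_k} \simeq M_k \oplus M_k$ and then invoke coproduct universality. Since $|I_k| = |I| = |\mathcal S^+_{\|\cdot\| \le 1}|$ is large enough to index $M_k(\mathcal S)^+_{\|\cdot\| \le 1}$ for every $k$, I first fix a surjection $\iota_k \mapsto x_{\iota_k}$ from $I_k$ onto $M_k(\mathcal S)^+_{\|\cdot\| \le 1}$.

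For each $\iota_k$, set $x := x_{\iota_k}$ and let $\psi_x : M_k \to \mathcal S$ be the CP map whose Choi matrix $[\psi_x(e_{ij})]_{i,j}$ equals $x$ (every positive element of $M_k(\mathcal S)$ arises this way). Define
$$\phi_{\iota_k}(A, B) = \tfrac{1}{k}\psi_x(A) + \tau(B) \Bigl( 1_{\mathcal S} - \tfrac{1}{k} \sum_i x_{ii} \Bigr),$$
with $\tau$ the normalized trace on $M_k$. The first summand is CP since $\psi_x$ is. For the second, $\|x\| \le 1$ gives $x \le I_k \otimes 1_{\mathcal S}$, and applying the completely positive partial trace $[a_{ij}] \mapsto \sum_i a_{ii}$ yields $\sum_i x_{ii} \le k\, 1_{\mathcal S}$, so the scalar $1_{\mathcal S} - \tfrac{1}{k}\sum_i x_{ii}$ is positive and the second summand factors as $M_k \xrightarrow{\tau} \mathbb C \to \mathcal S$ through two CP maps. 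Unitality is a direct check; Proposition \ref{coproduct}(iii) then assembles the $\phi_{\iota_k}$'s into a UCP map $\Phi : \mathfrak C_I \to \mathcal S$.

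For the lifting, introduce the unnormalized maximally entangled element $E = \sum_{i,j} e_{ij} \otimes e_{ij} \in M_k(M_k)^+$, which is a rank-one positive matrix with $\|E\| = k$. Given $x \in M_k(\mathcal S)^+_{\|\cdot\| \le 1}$, let $\tilde x \in M_k(\mathfrak C_{\iota_k(x)}) \hookrightarrow M_k(\mathfrak C_I)$ be the element corresponding under Proposition \ref{matrix-coproduct} to the positive element $(kE, 0) \in M_k(M_k \oplus M_k)$. Then $\tilde x \ge 0$, $\|\tilde x\| = k \cdot \|E\| = k^2$, and
$$\Phi_k(\tilde x) = (\phi_{\iota_k(x)})_k(kE, 0) = (\psi_x)_k(E) = [\psi_x(e_{ij})]_{i,j} = x,$$
the second equality being the defining property of the Choi matrix. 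Rescaling by $\|x\|$ handles the non-contractive case and yields the bound $\|\tilde x\| \le k^2 \|x\|$. Positive lifting at every matrix level immediately makes $\Phi$ a complete order quotient map, whence $\mathcal S$ is an operator system quotient of $\mathfrak C_I$; complete order proximinality of $\ker \Phi$ follows because, for any representative $[x_{ij}] \in M_k(\mathfrak C_I)$ of a positive coset, subtracting it from the positive lift of its image in $M_k(\mathcal S)$ produces the required $[k_{ij}] \in M_k(\ker \Phi)$.

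The main obstacle is designing $\phi_{\iota_k}$ so that its $k$-amplification sends a canonical positive element of controlled norm exactly onto the pre-assigned $x$. The factor $\tfrac{1}{k}$ in $\phi_{\iota_k}$ is dictated by unitality, and it is compensated by the factor $k$ multiplying $E$ in the lift; combined with $\|E\| = k$ this accounts for the $k^2$ blow-up in the statement. The algebraic input that makes the identity $(\psi_x)_k(E) = x$ hold on the nose is the Choi matrix correspondence between positive elements of $M_k(\mathcal S)$ and CP maps $M_k \to \mathcal S$.
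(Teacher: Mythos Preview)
Your argument is correct and follows essentially the same strategy as the paper: index $M_k(\mathcal S)^+_{\|\cdot\|\le 1}$ by $I_k$, build a UCP map on each summand whose $k$-amplification sends (a scalar multiple of) the maximally entangled matrix to the prescribed positive contraction, and assemble via the coproduct universal property; the $k^2$ bound comes from $\|kE\|=k^2$ in both proofs. The only cosmetic differences are that the paper works in the $M_k(C([0,1]))$ picture (defining $\Phi_{\iota_k}(\alpha\otimes f)=\tfrac1k\sum_{i,j}\alpha_{ij}f(x_{\iota_k})_{ij}$ via functional calculus in an ambient $B(H)$ and then restricting to $\mathfrak C_{\iota_k}$), whereas you work directly in the $M_k\oplus M_k$ realization of Proposition~\ref{matrix-coproduct}; correspondingly the paper's lift is $k[E_{ij}\otimes t_{\iota_k}]_{i,j}$, which under the isomorphism becomes $(0,kE)$ rather than your $(kE,0)$, and the unitality correction term differs accordingly.
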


\begin{proof}
Let $\{ I_k \}_{k \in \mathbb N}$ be a sequence of index sets with the same cardinality as $I$. Then each element in $M_k(\mathcal S)_{\|\cdot \| \le 1}^+$ can be indexed by $I_k$. Suppose that $\mathcal S \subset B(H)$. Then for each index $\iota_k \in I_k$, we define a unital completely positive map $\Phi_{\iota_k} : M_k (C([0,1]))_{\iota_k} \to B(H)$ as $$\Phi_{\iota_k} (\alpha \otimes f) = {1 \over k} \begin{pmatrix} e_1^t & \cdots & e_k^t \end{pmatrix} \alpha \otimes f(x_{\iota_k}) \begin{pmatrix} e_1 \\ \vdots \\ e_k \end{pmatrix} = {1 \over k} \sum_{i,j} \alpha_{i,j} f(x_{\iota_k})_{i,j},$$ where $x_{\iota_k} \in M_k(\mathcal S)_{\|\cdot \| \le 1}^+$ and each $e_i$ is a column vector. Let $\varphi_{\iota_k} : \mathfrak C_{\iota_k} \to \mathcal S$ be its restriction on $\mathfrak C_{\iota_k}$. By Proposition \ref{coproduct} (iii), there exists a unital completely positive map  $\Phi : \mathfrak C_I \to \mathcal S$ which extends all $\varphi_{\iota_k}$ over $\iota_k \in I_k, k \in \mathbb N$. Since $\mathcal S_{\|\cdot\| \le 1}^+$ is contained in the range of $\Phi$, $\Phi$ is surjective.

Choose an element $x_{\iota_k} \in M_k(\mathcal S)^+_{\|\cdot\|=1}$. From $$\Phi_k (k[E_{ij} \otimes t_{\iota_k}]_{i,j})=[k\Phi (E_{ij} \otimes t_{\iota_k})]_{i,j}=[x_{\iota_k}(i,j)]_{i,j}=x_{\iota_k}$$ and $$k[E_{i,j} \otimes t_{\iota_k}]_{i,j}=k[E_{ij}]_{i,j} \otimes t_{\iota_k} = k \begin{pmatrix} e_1 \\ \vdots \\ e_n \end{pmatrix} \begin{pmatrix} e_1^t & \cdots & e_n^t \end{pmatrix} \otimes t_{\iota_k} \in M_{k^2}(C([0,1]))^+,$$ we see that $\Phi : \mathfrak C_I \to \mathcal S$ is a complete order quotient map whose kernel is completely order proximinal. Moreover, we have $$\begin{aligned} \|k[E_{i,j} \otimes t_{\iota_k}]_{i,j}\|_{M_k(C([0,1]))} & = \|k[E_{i,j}]_{i,j}\| \\ & =k\| \begin{pmatrix} e_1 \\ \vdots \\ e_k \end{pmatrix} \begin{pmatrix} e_1^t & \cdots & e_k^t \end{pmatrix}\| \\ & = k\| \begin{pmatrix} e_1^t & \cdots & e_k^t \end{pmatrix} \begin{pmatrix} e_1 \\ \vdots \\ e_k \end{pmatrix} \| \\ & = k^2. \end{aligned}$$
\end{proof}

We define the operator system $\mathfrak C_1$ as the coproduct $$\oplus_1 \{ M_k \oplus M_k : k \in \mathbb N \}.$$
Note that $\mathfrak C_1 = \mathfrak C_I$ when $|I|=1$.

\begin{thm}\label{N}
Suppose that an operator system $\mathcal S$ is a countable union of its finite dimensional subsystems. Then, $\mathcal S$ is an operator system quotient of $\mathfrak C_1$.
\end{thm}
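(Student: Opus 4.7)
The plan is to adapt Theorem \ref{universal} by replacing its continuum-sized indexing of positive contractives with a countable enumeration of the extremal rays of the finite-dimensional cones $M_k(\mathcal{S}_m)^+$.

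Write $\mathcal{S} = \bigcup_m \mathcal{S}_m$ as an increasing chain of finite-dimensional operator subsystems. For each pair $(k, m)$, the cone $M_k(\mathcal{S}_m)^+$ is pointed and closed in a finite-dimensional ambient space, hence the positive hull of finitely many extremal rays; pick representatives normalized to lie in $M_k(\mathcal{S})^+_{\|\cdot\|=1}$. Their union over $m$, which I denote $D_k$, is countable, and I enumerate $D_k = \{z^{(k)}_r\}_{r \in \mathbb{N}}$.

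Fix a bijection $(k, r) \mapsto l(k, r)$ from $\mathbb{N}^2$ onto $\mathbb{N}$ satisfying $l(k, r) \ge k$---for instance $l(k, r) = 2^{k-1}(2r - 1)$. For each $l \in \mathbb{N}$, corresponding via the bijection to the unique pair $(k_l, r_l)$ with $k_l \le l$, pad $z^{(k_l)}_{r_l}$ to $y_l := \operatorname{diag}(z^{(k_l)}_{r_l}, \, I_{l - k_l} \otimes 1_\mathcal{S}) \in M_l(\mathcal{S})^+_{\|\cdot\|\le 1}$, and define $\Phi_l : \mathfrak{C}_l \to \mathcal{S}$ by $\Phi_l(\alpha \otimes f) := \frac{1}{l} \sum_{i,j} \alpha_{ij} f(y_l)_{ij}$, exactly as in the proof of Theorem \ref{universal}. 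Proposition \ref{coproduct}(iii) amalgamates the $\Phi_l$'s into a unital completely positive map $\Phi : \mathfrak{C}_0 \to \mathcal{S}$, and the same block computation as in Theorem \ref{universal} shows that $(\Phi_l)_{k_l}$ sends the positive element $l\,[e_{ij} \otimes t]_{i,j=1}^{k_l} \in M_{k_l}(\mathfrak{C}_l)^+$ to $z^{(k_l)}_{r_l}$, providing an exact positive lift.

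Given $Q \in M_n(\mathcal{S})^+$, choose $m$ with $Q \in M_n(\mathcal{S}_m)^+$---available because $\mathcal{S}$ is a set-theoretic union of the $\mathcal{S}_m$. Carath\'eodory's theorem for finite-dimensional pointed cones yields a finite positive decomposition $Q = \sum_j \mu_j \eta_j$, where each $\eta_j$ is an extremal-ray representative of $M_n(\mathcal{S}_m)^+$ and hence lies in $D_n$. Summing the corresponding exact positive lifts produces $P \in M_n(\mathfrak{C}_0)^+$ with $\Phi_n(P) = Q$. Hence $\Phi$ is a complete order quotient map whose kernel is completely order proximinal, strengthening the operator-system-quotient claim of the theorem; surjectivity follows automatically.

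The subtle point is that the hypothesis must be read set-theoretically---every positive element of $\mathcal{S}$ must lie in some finite-dimensional $\mathcal{S}_m$, not merely be a norm-limit of such elements. It is this finitary structure, rather than mere separability, that lets a single copy of each $\mathfrak{C}_l$ in $\mathfrak{C}_0$ suffice to provide exact positive lifts simultaneously for all of $\mathcal{S}^+$.
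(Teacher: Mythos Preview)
Your argument has a genuine gap at its very first step: the assertion that the cone $M_k(\mathcal{S}_m)^+$ is ``the positive hull of finitely many extremal rays'' is false. A closed pointed cone in a finite-dimensional space need not be polyhedral. Already for $\mathcal{S}_m = M_2$ and $k=1$ the cone $M_2^+$ has extremal rays $\{\lambda vv^*: \lambda>0\}$ parametrized by $\mathbb{CP}^1$, hence uncountably many. The same phenomenon persists for $M_k(\mathcal{S}_m)^+$ whenever $\mathcal{S}_m$ is anything richer than a commutative algebra. Consequently your countable set $D_k$ cannot be built from extremal rays, and since an extremal ray of $M_k(\mathcal{S}_m)^+$ that is \emph{not} (a scalar multiple of) an element of $D_k$ can never be written as a finite positive combination of elements of $D_k$, no countable set whatsoever will generate the cone exactly. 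The Carath\'eodory step therefore has nothing to feed on, and the ``exact positive lift'' you claim for every $Q\in M_n(\mathcal S)^+$ does not exist. This also explains why your conclusion (completely order proximinal kernel) is strictly stronger than what the paper asserts or proves.

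The paper avoids this obstruction by working with a countable \emph{dense} subset of each $M_k(E)^+_{\|\cdot\|\le 1}$ and then using a perturbation argument: lift a nearby element exactly, and absorb the error using the finite-dimensionality of $E$ via the complete boundedness of $\tilde\Phi^{-1}$. That step genuinely requires the $\varepsilon$-room in the definition of a complete order quotient map; one does not get proximinality for free. The paper also does not attempt to make a single copy of each $\mathfrak C_k$ do all the work directly: it first realises each finite-dimensional piece as a quotient of $\mathfrak C_{\mathbb N}$, shows $\mathfrak C_{\mathbb N}$ is a quotient of $\mathfrak C_0$, and then amalgamates over the countable family of subsystems via the coproduct. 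If you want to salvage your one-pass construction, you must replace ``finitely many extremal rays'' by a density-plus-perturbation mechanism of this kind.
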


\begin{proof}
First, we show that every finite dimensional operator system is an operator system quotient of $\mathfrak C_{\mathbb N}$. Let $E$ be a finite dimensional operator system. We index a countable dense subset $D_k$ of $M_k(E)_{\|\cdot\| \le 1}^+$ by $\mathbb N$. Define a unital completely positive map $\Phi : \mathfrak C_{\mathbb N} \to E$ as in Theorem \ref{universal}. Since the range of $\Phi$ is a dense subspace of a finite dimensional space $E$, $\Phi$ is surjective.

Choose $\varepsilon >0$ and an element $x$ in $M_k(E)_{\|\cdot\| \le 1}^+$. Since $E$ is finite dimensional, the inverse of $\tilde{\Phi} : \mathfrak C_{\mathbb N} \slash {\rm Ker} \Phi \to E$ is completely bounded. Let $$\|\tilde{\Phi}^{-1} : E \to \mathfrak C_{\mathbb N} \slash {\rm Ker} \Phi \|_{cb} \le M.$$ Take $y \in D_k$ so that $\|x-y\| \le {\varepsilon \over 2M}$. Since $\|\tilde{\Phi}^{-1}_k (x-y)\| \le {\varepsilon \over 2}$, we have $$\tilde{\Phi}_k^{-1} (x-y) + {\varepsilon \over 2} I_k \otimes 1_{\mathfrak C_{\mathbb N} \slash {\rm Ker} \Phi} \in M_k(\mathfrak C_{\mathbb N} \slash {\rm Ker} \Phi)^+.$$ There exists a positive element $z$ in $M_k(\mathfrak C_{\mathbb N})$ satisfying $$z + {\rm Ker} \Phi_k = \tilde{\Phi}^{-1}_k (x-y)+ \varepsilon I_k \otimes 1_{\mathfrak C_{\mathbb N} \slash {\rm Ker} \Phi},$$ which implies $$\Phi_k (z)= \tilde{\Phi}_k (z + {\rm Ker} \Phi) = x-y+\varepsilon I_k \otimes 1_E.$$ As in the proof of Theorem \ref{universal}, we can take a positive element $\tilde{y}$ in $M_k(\mathfrak C_{\mathbb N})$ such that $\Phi_k (\tilde y) = y$. It follows that $$\Phi_k (z + \tilde{y}) = (x-y+\varepsilon I_k \otimes 1_E )+y = x+\varepsilon I_k \otimes 1_E.$$ Hence, $E$ is an operator system quotient of $\mathfrak C_{\mathbb N}$.

Next, we show that $\mathfrak C_{\mathbb N}$ is an operator system quotient of $\mathfrak C_1$. We enumerate the coproduct summands of $\mathfrak C_{\mathbb N}$ as
$$(M_1 \oplus M_1)_1, (M_1 \oplus M_1)_2, (M_2 \oplus M_2)_1, (M_1 \oplus M_1)_3, (M_2 \oplus M_2)_2, (M_3 \oplus M_3)_1, \cdots$$ and denote them by $M_{a_k} \oplus M_{a_k}$. Since $k \ge a_k$, the identity map on $M_{a_k}$ is factorized as $Q_k \circ J_k$ for unital completely positive maps
$$J_k : A \in M_{a_k} \mapsto A \oplus \omega(A) I_{k-a_k} \in M_k \qquad (\omega : \text{a state on}~M_{a_k})$$ and
$$Q_k : A \in M_k \mapsto [A_{i,j}]_{1 \le i,j \le a_k} \in M_{a_k}.$$
By the universal property of the coproduct, there exists a unital completely positive map $J : \mathfrak C_{\mathbb N} \to \mathfrak C_1$ (respectively $Q : \mathfrak C_1 \to \mathfrak C_{\mathbb N}$) which extends all $J_k \oplus J_k :  M_{a_k} \oplus M_{a_k} \to M_k \oplus M_k$ (respectively $Q_k \oplus Q_k : M_k \oplus M_k \to M_{a_k} \oplus M_{a_k}$). Then, the identity map on $\mathfrak C_{\mathbb N}$ is factorized as $Q \circ J$. Hence, $\mathfrak C_{\mathbb N}$ is an operator system quotient of $\mathfrak C_1$.

Suppose that $\mathcal S = \bigcup_{k=1}^\infty E_k$ for finite dimensional subsystems $E_k$ of $\mathcal S$. We can find complete order quotient maps $\Psi_k : \mathfrak C_1 \to E_k$. By the universal property of the coproduct, there exists a unital completely positive map $\Psi : \mathfrak C_{\mathbb N} \to \mathcal S$ which extends all $\Psi_k$. It is easy to check that $\Psi$ is a complete order quotient map. Since $\mathfrak C_{\mathbb N}$ is an operator system quotient of $\mathfrak C_1$, $\mathcal S$ is an operator system quotient of $\mathfrak C_1$.
\end{proof}

\begin{thm}\label{LP}
Suppose that $\mathcal A$ is a unital $C^*$-algebra and $\mathcal I$ is a closed ideal in it. Every unital completely positive map $\varphi : \mathfrak C_I \to \mathcal A \slash \mathcal I$ lifts to a unital completely positive map $\tilde{\varphi} : \mathfrak C_I \to \mathcal A$, i.e. such that the diagram
$$\xymatrix{ & \mathcal A \ar@{->>}[d] \\ \mathfrak C_I \ar@{-->}[ur]^{\tilde{\varphi}} \ar[r]^{\varphi}& \mathcal A \slash \mathcal I}$$ commutes.
\end{thm}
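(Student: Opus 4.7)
The plan is to exploit the universal property of the coproduct $\mathfrak C_I = \oplus_1 \{ \mathfrak C_{\iota_k} : k \in \mathbb N,\ \iota_k \in I_k \}$ to reduce the lifting problem to its simple building blocks. For each index $\iota_k$, write $j_{\iota_k} : \mathfrak C_{\iota_k} \hookrightarrow \mathfrak C_I$ for the canonical complete order embedding (Proposition~\ref{coproduct}(ii)) and set $\varphi_{\iota_k} := \varphi \circ j_{\iota_k} : \mathfrak C_{\iota_k} \to \mathcal A / \mathcal I$. If for every index we can produce a ucp lift $\tilde\varphi_{\iota_k} : \mathfrak C_{\iota_k} \to \mathcal A$ of $\varphi_{\iota_k}$, then Proposition~\ref{coproduct}(iii) packages the family $\{\tilde\varphi_{\iota_k}\}$ into a single ucp map $\tilde\varphi : \mathfrak C_I \to \mathcal A$. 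The compositions $\pi \circ \tilde\varphi$ and $\varphi$ then agree on every summand $\mathfrak C_{\iota_k}$, so by the uniqueness clause in the universal property they must coincide, which is the desired lift.

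The entire technical content therefore lies in lifting an individual ucp map $\psi : \mathfrak C_{\iota_k} \to \mathcal A / \mathcal I$. By Proposition~\ref{matrix-coproduct}, $\mathfrak C_{\iota_k}$ is unitally completely order isomorphic to the finite-dimensional $C^*$-algebra $M_k \oplus M_k$, so what is needed is exactly the classical fact that ucp maps from a finite-dimensional $C^*$-algebra into a $C^*$-quotient admit a ucp lift. This is essentially due to Arveson, and one concrete route uses Choi's correspondence: ucp maps $M_k \to \mathcal A/\mathcal I$ correspond to positive elements of $M_k(\mathcal A/\mathcal I) = M_k(\mathcal A)/M_k(\mathcal I)$ whose diagonal entries sum to $1$; lift such an element to a positive element of $M_k(\mathcal A)$ (positive elements lift in $C^*$-quotients) and then use a quasicentral approximate unit of $\mathcal I$ to renormalize so that the diagonal sum is exactly $1_{\mathcal A}$. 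The direct-sum case $M_k \oplus M_k$ is handled by performing the construction on each summand and matching the scalar identity components.

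The main obstacle is this unital renormalization inside the finite-dimensional lifting step; once it is granted as a classical result, the remainder of the proof is purely formal book-keeping supplied by the universal property of the coproduct. In particular, no new tensor-product or quotient machinery beyond Proposition~\ref{coproduct} and Proposition~\ref{matrix-coproduct} is required.
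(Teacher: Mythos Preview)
Your proposal is correct and follows essentially the same route as the paper: reduce via the universal property of the coproduct to lifting each restriction $\varphi|_{(M_k\oplus M_k)_{\iota_k}}$, use the Choi correspondence to lift positive elements through the $C^*$-quotient, then renormalize to unital and reassemble. The only minor difference is in the unitalization step: where you invoke a quasicentral approximate unit, the paper writes down the explicit correction $\alpha \mapsto (1+h^+)^{-1/2}\bigl(\tilde\varphi_{\iota_k}(\alpha)+\omega(\alpha)h^-\bigr)(1+h^+)^{-1/2}$ with $\tilde\varphi_{\iota_k}(I_{2k})=1+h$ and $h=h^+-h^-\in\mathcal I$, which is more elementary and avoids any limiting argument.
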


\begin{proof}
Let $z_{\iota_k}$ be the direct sum of two Choi matrices associated to the restrictions of $\varphi|_{(M_k \oplus M_k)_{\iota_k}}$ on each two blocks $M_k$, that is, $$z_{\iota_k} = [\varphi|_{(M_k \oplus M_k)_{\iota_k}} (E_{i,j} \oplus 0_k)]_{i,j} \oplus [\varphi|_{(M_k \oplus M_k)_{\iota_k}} (0_k  \oplus E_{i,j})]_{i,j}.$$ Then $z_{\iota_k}$ belongs to the positive cone of ${M_k(\mathcal A \slash I)} \oplus M_k(\mathcal A \slash I)$. Then let $\tilde{z}_{\iota_k} \in M_k(\mathcal A) \oplus M_k(\mathcal A)$ be a positive lifting $z_{\iota_k}$. Its corresponding mapping $$\tilde{\varphi}_{\iota_k} : (M_k \oplus M_k)_{\iota_k} \to \mathcal A$$ is a completely positive lifting of $\varphi|_{(M_k \oplus M_k)_{\iota_k}}$. We let $$\tilde{\varphi}_{\iota_k}(I_{2k})=1+h, \qquad h=h^+-h^- \qquad (h \in \mathcal I, \quad h^+, h^- \in \mathcal I^+)$$ and take a state $\omega$ on $(M_k \oplus M_k)_{\iota_k}$. Considering $$\alpha \in (M_k \oplus M_k)_{\iota_k} \mapsto (1 + h^+)^{-{1 \over 2}}(\tilde{\varphi}_{\iota_k} (\alpha) + \omega (\alpha)h^-)(1 + h^+)^{-{1 \over 2}} \in \mathcal A$$ as in \cite[Remark 8.3]{KPTT2}, we may assume that the lifting $\tilde{\varphi}_{\iota_k}$ is unital.  By the universal property of the coproduct, there exists a unital completely positive map $\tilde{\varphi} : \mathfrak C_I \to \mathcal S$ that extends all $\tilde{\varphi}_{\iota_k}$.
\end{proof}

The universal $C^*$-algebra $C^*_u(\mathcal S)$ is the $C^*$-cover of $\mathcal S$ satisfying the universal property:
if $\varphi : \mathcal S \to \mathcal A$ is a unital completely positive map for a unital $C^*$-algebra $\mathcal A$, then there exists a $*$-homomorphism $\pi : C^*_u(\mathcal S) \to \mathcal A$ such that $\pi \circ \iota = \varphi$ \cite{KW}. For a unital completely positive map $\varphi : \mathcal S \to \mathcal T$ and a complete order embedding $\iota : \mathcal T \to C^*_u(\mathcal T)$, we denote the unique $*$-homomorphic extension of $\iota \circ \varphi : \mathcal S \to C^*_u(\mathcal T)$ by $C^*_u(\varphi)$. We can regard $C^*_u(\cdot)$ as a functor from the category of operator systems to the category of $C^*$-algebras.

\begin{cor}
Let $\mathcal S$ be an operator system and $Q : \mathfrak C_I \to \mathcal S$ be a complete order quotient map. The following are equivalent:
\begin{enumerate}
\item[(i)] $\mathcal S$ has the operator system lifting property;
\item[(ii)] $C^*_u(Q) : C^*_u(\mathfrak C_I) \to C^*_u(\mathcal S)$ has a unital $*$-homomorphic right inverse;
\item[(iii)] $C^*_u(Q) : C^*_u(\mathfrak C_I) \to C^*_u(\mathcal S)$ has a unital completely positive right inverse.
\end{enumerate}
\end{cor}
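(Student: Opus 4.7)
The plan is to leverage the universal property of $C^*_u(\cdot)$ together with the operator system lifting property of $\mathfrak C_I$ proved in Theorem~\ref{LP}. Throughout I identify $\mathcal S$ with its canonical image in $C^*_u(\mathcal S)$ and similarly $\mathfrak C_I \subset C^*_u(\mathfrak C_I)$. I begin with the observation that $C^*_u(Q)$ is surjective: its image is a $C^*$-subalgebra containing $Q(\mathfrak C_I) = \mathcal S$, which generates $C^*_u(\mathcal S)$. Consequently $C^*_u(\mathcal S)$ is canonically identified with the $C^*$-quotient $C^*_u(\mathfrak C_I)/\ker C^*_u(Q)$.

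For (i) $\Rightarrow$ (ii), I view the canonical inclusion $\iota_{\mathcal S} : \mathcal S \hookrightarrow C^*_u(\mathcal S)$ as a unital completely positive map into the $C^*$-quotient $C^*_u(\mathfrak C_I)/\ker C^*_u(Q)$, and invoke the operator system lifting property of $\mathcal S$ to obtain a unital completely positive lift $\tilde\iota : \mathcal S \to C^*_u(\mathfrak C_I)$ with $C^*_u(Q) \circ \tilde\iota = \iota_{\mathcal S}$. The universal property of $C^*_u(\mathcal S)$ then extends $\tilde\iota$ to a unital $*$-homomorphism $\sigma : C^*_u(\mathcal S) \to C^*_u(\mathfrak C_I)$. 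The composition $C^*_u(Q) \circ \sigma$ is a unital $*$-homomorphism on $C^*_u(\mathcal S)$ whose restriction to $\mathcal S$ is the identity; since $\mathcal S$ generates $C^*_u(\mathcal S)$, it equals $\mathrm{id}_{C^*_u(\mathcal S)}$, giving (ii). The implication (ii) $\Rightarrow$ (iii) is immediate because unital $*$-homomorphisms are unital completely positive.

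For (iii) $\Rightarrow$ (i), let $\sigma : C^*_u(\mathcal S) \to C^*_u(\mathfrak C_I)$ be a unital completely positive right inverse of $C^*_u(Q)$, and suppose $\varphi : \mathcal S \to \mathcal A/\mathcal I$ is a unital completely positive map for a unital $C^*$-algebra $\mathcal A$ with closed ideal $\mathcal I$. Composing with $Q$ yields a unital completely positive map $\varphi \circ Q : \mathfrak C_I \to \mathcal A/\mathcal I$, which by Theorem~\ref{LP} admits a unital completely positive lift $\tilde\psi : \mathfrak C_I \to \mathcal A$. The universal property of $C^*_u(\mathfrak C_I)$ extends $\tilde\psi$ to a unital $*$-homomorphism $\hat\psi : C^*_u(\mathfrak C_I) \to \mathcal A$, and I set $\tilde\varphi := (\hat\psi \circ \sigma)|_{\mathcal S} : \mathcal S \to \mathcal A$. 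To verify $\pi_{\mathcal I} \circ \tilde\varphi = \varphi$, let $\hat\varphi : C^*_u(\mathcal S) \to \mathcal A/\mathcal I$ be the $*$-homomorphic extension of $\varphi$. Then $\pi_{\mathcal I} \circ \hat\psi$ and $\hat\varphi \circ C^*_u(Q)$ are $*$-homomorphisms $C^*_u(\mathfrak C_I) \to \mathcal A/\mathcal I$ that both restrict to $\varphi \circ Q$ on the generating subsystem $\mathfrak C_I$, hence coincide; combining this with $C^*_u(Q) \circ \sigma = \mathrm{id}_{C^*_u(\mathcal S)}$ gives $\pi_{\mathcal I} \circ \tilde\varphi = \hat\varphi|_{\mathcal S} = \varphi$.

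The only point requiring care is bookkeeping: one must distinguish the $*$-homomorphism $C^*_u(Q)$ on $C^*_u(\mathfrak C_I)$ from the quotient map $Q$ on $\mathfrak C_I$, keep track of which maps live on $\mathcal S$ versus $C^*_u(\mathcal S)$, and repeatedly invoke the uniqueness of $*$-homomorphic extensions from a generating operator subsystem. There is no deep obstacle beyond this; once the surjectivity of $C^*_u(Q)$ and the identification $C^*_u(\mathcal S) \cong C^*_u(\mathfrak C_I)/\ker C^*_u(Q)$ are in hand, the equivalence reduces to a diagram chase whose critical input is the operator system lifting property of $\mathfrak C_I$.
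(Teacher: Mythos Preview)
Your proof is correct and follows essentially the same approach as the paper: both directions hinge on the surjectivity of $C^*_u(Q)$ (so that $C^*_u(\mathcal S)$ is a $C^*$-quotient of $C^*_u(\mathfrak C_I)$), the universal property of $C^*_u(\cdot)$, and the operator system lifting property of $\mathfrak C_I$ from Theorem~\ref{LP}, with the verification in (iii)$\Rightarrow$(i) carried out by the same diagram chase using uniqueness of $*$-homomorphic extensions from the generating subsystem. Your write-up is in fact a bit more explicit than the paper's on why $C^*_u(Q)$ is surjective and why the extended right inverse really is a right inverse, but the argument is the same.
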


\begin{proof}
(i) $\Rightarrow$ (ii). The inclusion $\iota : \mathcal S \subset C^*_u(\mathcal S)$ lifts to a unital completely positive map $\tilde{\iota} : \mathcal S \to C^*_u(\mathfrak C_I)$. Its $*$-homomorphic extension $\rho : C^*_u(\mathcal S) \to C^*_u(\mathfrak C_I)$ is the right inverse of $C^*_u(Q) : C^*_u(\mathfrak C_I) \to C^*_u(\mathcal S)$.

(ii) $\Rightarrow$ (iii). Trivial.

(iii) $\Rightarrow$ (i). Suppose that $\varphi : \mathcal S \to \mathcal A \slash \mathcal I$ is a unital completely positive map for a unital $C^*$-algebra $\mathcal A$ and its closed ideal $\mathcal I$. By Theorem \ref{LP}, $\varphi \circ Q : \mathfrak C_I \to \mathcal A \slash \mathcal I$ lifts to a unital completely positive map $\psi : \mathfrak C_I \to \mathcal A$. Let $\rho : C^*_u(\mathfrak C_I) \to \mathcal A$ (respectively $\sigma : C^*_u(\mathcal S) \to {\mathcal A \slash \mathcal I}$) be a unique $*$-homomorphic extension of $\psi$ (respectively $\varphi$). Suppose that $r$ is a unital completely positive right inverse of $C^*_u(Q)$. We thus have the diagram $$\xymatrix{C^*_u(\mathfrak C_I) \ar[rr]^{\rho} \ar@<0.5ex>[rd]^{C^*_u(Q)} && \mathcal A \ar@{->>}[dd]^{\pi} \\ & C^*_u(\mathcal S) \ar[rd]^{\sigma} \ar@<0.5ex>[lu]^{r} & \\ \mathfrak C_I \ar@{^{(}->}[uu] \ar@{->>}[r]^{Q} & \mathcal S \ar@{^{(}->}[u]^{\iota} \ar[r]^{\varphi} & \mathcal A \slash \mathcal I.}$$ Let us show that $$\tilde{\varphi}:=\rho \circ r \circ \iota : \mathcal S \to \mathcal A$$ is a lifting of $\varphi$. Since $\mathfrak C_I$ generates $C^*_u(\mathfrak C_I)$ as a $C^*$-algebra, $\pi \circ \psi = \varphi \circ Q$ implies that $$\pi \circ \rho = \sigma \circ C^*_u (Q).$$ For $x \in \mathcal S$, we have $$\pi \circ \tilde{\varphi} (x)= \pi \circ \rho \circ r (x) = \sigma \circ C^*_u(Q) \circ r (x) = \varphi (x).$$
\end{proof}

\section{A Kirchberg type tensor theorem for operator systems}

For a free group $\mathbb F$ and a Hilbert space $H$, Kirchberg \cite[Corollary 1.2]{Ki} proved that $$C^*(\mathbb F) \otimes_{\min} B(H) = C^*(\mathbb F) \otimes_{\max} B(H).$$ Kirchberg's theorem is striking if we recall that $C^*(\mathbb F)$ and $B(H)$ are universal objects in the $C^*$-algebra category: every $C^*$-algebra is a $C^*$-quotient of $C^*(\mathbb F)$ and a $C^*$-subalgebra  of $B(H)$ for suitable choices of $\mathbb F$ and $H$. Every operator system is a quotient of $\mathfrak C_I$ and a subsystem of $B(H)$ for suitable choices of $I$ and $H$. Hence we may say that $$\mathfrak C_I \otimes_{\min} B(H) = \mathfrak C_I \otimes_{\max} B(H),$$ the proof of which will follow, is the Kirchberg type theorem in the category of operator systems.

If $\mathcal S$ has the operator system local lifting property, $\mathcal S \otimes_{\min} B(H) = \mathcal S \otimes_{\max} B(H)$ \cite[Theorem 8.6]{KPTT2}. From this, Theorem \ref{LP} immediately yields that $\mathfrak C_I \otimes_{\min} B(H) = \mathfrak C_I \otimes_{\max} B(H)$. The proof of \cite[Theorem 8.6]{KPTT2} depends on Kirchberg's theorem. We give a direct proof of $\mathfrak C_I \otimes_{\min} B(H) = \mathfrak C_I \otimes_{\max} B(H)$ that is independent of Kirchberg's theorem. By combining this with \cite{Ka2}, we present a new operator system theoretic proof of Kirchberg's theorem in Corollary \ref{Kirchberg}.

\begin{thm}\label{main}
For an index set $I$ and a Hilbert space $H$, we have $$\mathfrak C_I \otimes_{\min} B(H) = \mathfrak C_I \otimes_{\max} B(H).$$
\end{thm}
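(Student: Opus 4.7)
The plan is to use the operator system lifting property of $\mathfrak{C}_I$ (Theorem~\ref{LP}) together with the coproduct structure $\mathfrak{C}_I \cong \oplus_1\{(M_k \oplus M_k)_{\iota_k}\}$ from Proposition~\ref{matrix-coproduct}, and execute a Stinespring-dilation-and-lift argument. Since $B(H)$ is a $C^*$-algebra, we have $\mathfrak{C}_I \otimes_{\max} B(H) = \mathfrak{C}_I \otimes_c B(H)$, so it suffices to show that for every $X \in M_n(\mathfrak{C}_I \otimes_{\min} B(H))^+$ and every pair of unital completely positive maps $\varphi : \mathfrak{C}_I \to B(K)$ and $\psi : B(H) \to B(K)$ with commuting ranges, one has $(\varphi \cdot \psi)_n(X) \geq 0$.

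First I would apply the Stinespring dilation to $\psi$ to produce a $*$-representation $\pi : B(H) \to B(K')$ with $\psi(\cdot) = V^* \pi(\cdot) V$ for some isometry $V : K \to K'$. The central step is then to invoke OSLP to produce a unital completely positive map $\tilde\varphi : \mathfrak{C}_I \to B(K')$ satisfying $V^* \tilde\varphi(\cdot) V = \varphi(\cdot)$ and whose range commutes with $\pi(B(H))$. The commutation requirement is encoded as a lifting problem by exhibiting a $C^*$-algebra presentation that contains $\pi(B(H))$ as a commuting subalgebra and whose relevant quotient recovers $\varphi$; Theorem~\ref{LP} then delivers the commuting lift $\tilde\varphi$.

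Once $(\tilde\varphi, \pi)$ is in hand, consider the joint map $\tilde\varphi \cdot \pi : \mathfrak{C}_I \otimes B(H) \to B(K')$. Because $\pi$ is a $*$-representation and $\tilde\varphi$ lands spatially in the commutant of $\pi(B(H))$, the pair realizes $\tilde\varphi \cdot \pi$ as a unital completely positive map on the minimal tensor product, not merely on the commuting one. Concretely, using the coproduct decomposition of $\mathfrak{C}_I$ and the nuclearity of each summand $(M_k \oplus M_k)_{\iota_k}$, the spatially realized map factors through each tensor product $(M_k \oplus M_k)_{\iota_k} \otimes_{\min} B(H) = (M_k \oplus M_k)_{\iota_k} \otimes_{\max} B(H)$ and reassembles via the universal property of the coproduct (Proposition~\ref{coproduct}(iii)) into a unital completely positive map on $\mathfrak{C}_I \otimes_{\min} B(H)$. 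Therefore $(\tilde\varphi \cdot \pi)_n(X) \geq 0$, and compressing by $V$ yields $(\varphi \cdot \psi)_n(X) = V^* (\tilde\varphi \cdot \pi)_n(X) V \geq 0$.

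The principal obstacle is the construction of the commuting dilation $\tilde\varphi$. Arveson's extension theorem alone does not preserve commutation with a prescribed $*$-representation, so the full strength of OSLP (Theorem~\ref{LP}, not merely a local version) is essential: it converts the commutation requirement into a standard quotient lifting problem that can be solved. Because Theorem~\ref{LP} was established without recourse to Kirchberg's theorem, the resulting proof of $\mathfrak{C}_I \otimes_{\min} B(H) = \mathfrak{C}_I \otimes_{\max} B(H)$ is self-contained, as required.
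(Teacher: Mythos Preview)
Your proposal has a genuine gap at precisely the step you flag as ``the principal obstacle'': the construction of the commuting dilation $\tilde\varphi$. You invoke Theorem~\ref{LP} (OSLP), but OSLP only lifts unital completely positive maps through $C^*$-algebra quotients $\mathcal A \to \mathcal A/\mathcal I$. The compression $V^*(\cdot)V : \pi(B(H))' \to B(K)$ is merely a unital completely positive map, not a $*$-homomorphism onto a $C^*$-quotient, so OSLP does not apply to it. Your sentence ``The commutation requirement is encoded as a lifting problem by exhibiting a $C^*$-algebra presentation that contains $\pi(B(H))$ as a commuting subalgebra and whose relevant quotient recovers $\varphi$'' never specifies the algebra, the ideal, or the quotient map, and I do not see any candidate that works. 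In the literature this commuting-dilation step is exactly where Kirchberg's theorem enters (see the proof of \cite[Theorem~8.6]{KPTT2}), and the paper explicitly warns just before the theorem that deducing $\mathfrak C_I \otimes_{\min} B(H)=\mathfrak C_I \otimes_{\max} B(H)$ from OSLP via that route would be circular. Your final paragraph's claim that the argument is self-contained is therefore unjustified; the fact that Theorem~\ref{LP} itself is independent of Kirchberg does not help if the \emph{implication} ``OSLP $\Rightarrow$ $\min=\max$ with $B(H)$'' is not.

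The paper's proof avoids this entirely and does not use Theorem~\ref{LP} at all. It works elementwise: a positive element of $\mathfrak C_I \otimes_{\min} B(H)$ lives in a finite sub-coproduct $\oplus_1\{(M_k\oplus M_k)_{\iota_k}:\iota_k\in F\}$, which by Proposition~\ref{coproduct-quotient} is a quotient of the finite direct sum $\oplus_{\iota_k\in F}(M_k\oplus M_k)$. Using the self-duality of matrix algebras (\cite[Theorem~6.2]{PTT}), the dual of this quotient is identified with an operator subsystem $K\subset \oplus_{\iota_k\in F}(M_k\oplus M_k)$, so the given positive element corresponds to a completely positive map $K\to B(H)$. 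Arveson extension to the full direct sum, combined with nuclearity of $\oplus_{\iota_k\in F}(M_k\oplus M_k)$ and the fact that $\max$ respects complete order quotient maps (\cite[Theorem~3.4]{H}), then shows the element is positive in the $\max$ tensor product. The essential point is that Arveson extension is applied to a map \emph{out of} a finite-dimensional subsystem, not to produce a commuting dilation; this is what makes the argument independent of Kirchberg.
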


\begin{proof}
Let $z$ be a positive element in $\mathfrak C_I \otimes_{\min} B(H)$. We write $z=\sum_{\iota_k \in F} z_{\iota_k}$ for a finite subset $F$ of $\bigcup_{k=1}^\infty I_k$ and $z_{\iota_k} \in \mathfrak C_{\iota_k} \otimes B(H)$. By Proposition \ref{coproduct} (ii) and the injectivity of the minimal tensor product, we can regard $z$ as a positive element in $$\oplus_1 \{ (M_k \oplus M_k)_{\iota_k} : \iota_k \in F \} \otimes_{\min} B(H).$$

We apply Proposition \ref{coproduct-quotient} to $\oplus_1 \{ (M_k \oplus M_k)_{\iota_k} : \iota_k \in F \}$ to obtain the complete order isomorphism $$\Phi : \sum_{\iota_k \in F} x_{\iota_k} \in \oplus_1 \{ (M_k \oplus M_k)_{\iota_k} : \iota_k \in F \} \mapsto |F| (x_{\iota_k})_{\iota_k \in F} +N \in \oplus_{\iota_k \in F} (M_k \oplus M_k)_{\iota_k} \slash N,$$ where $|F|$ denotes the number of elements of the set $F$ and
$$N= {\rm span} \{ n_{\iota_l} - n_{\iota_m'} \in \oplus_{\iota_k \in F} (M_k \oplus M_k)_{\iota_k} : \iota_l, \iota_m' \in F\} \qquad (n_{\iota_k}(\iota_j') = \delta_{\iota_k, \iota_j'} I_k \oplus I_k).$$ Let $$Q : \oplus_{\iota_k \in F} (M_k \oplus M_k)_{\iota_k} \twoheadrightarrow \oplus_{\iota_k \in F} (M_k \oplus M_k)_{\iota_k} \slash N$$ be the canonical quotient map. By \cite[Proposition 1.15]{FP}, its dual map $$Q^* : (\oplus_{\iota_k \in F} (M_k \oplus M_k)_{\iota_k} \slash N)^* \hookrightarrow (\oplus_{\iota_k \in F} (M_k \oplus M_k)_{\iota_k})^*$$ is a complete order embedding. The range of $Q^*$ is the annihilator $$N^\perp = \{ \varphi \in (\oplus_{\iota_k \in F} (M_k \oplus M_k)_{\iota_k})^* : N \subset {\rm Ker} \varphi \}.$$

The linear map $\gamma_k : M_k \to M_k^*$ defined as $$\gamma_k (\alpha) (\beta) = \sum_{i,j=1}^k \alpha_{i,j} \beta_{i,j} = {\rm tr} (\alpha \beta^t)$$ is a complete order isomorphism \cite[Theorem 6.2]{PTT}. Define a complete order isomorphism
$$\Gamma : \oplus_{\iota_k \in F} (M_k \oplus M_k)_{\iota_k} \to \oplus_{\iota_k \in F} (M_k^* \oplus M_k^*)_{\iota_k} \simeq (\oplus_{\iota_k \in F} (M_k \oplus M_k)_{\iota_k})^*$$ by
$$\langle \Gamma ((\alpha_{\iota_k})), (\beta_{\iota_k}) \rangle = \langle ((\gamma_k \oplus \gamma_k) ({\alpha_{\iota_k}\over 2k})),(\beta_{\iota_k}) \rangle = \sum_{\iota_k \in F} {1 \over 2k} {\rm tr} ( \alpha_{\iota_k} \beta_{\iota_k}^t).$$
Then, $\Gamma^{-1}$ maps the annihilator $N^\perp$ onto the operator subsystem $$K=\{ (\alpha_{\iota_k}) \in \oplus_{\iota_k \in F} (M_k \oplus M_k)_{\iota_k} : {{\rm tr}(\alpha_{\iota_l}) \over l}= {{\rm tr}(\alpha_{\iota_m'}) \over m} \ \text{for all}\ \iota_l, \iota_m' \in F\}$$ of $\oplus_{\iota_k \in F} (M_k \oplus M_k)_{\iota_k}$. We have obtained complete order isomorphisms
$$(\oplus_1 \{ (M_k \oplus M_k)_{\iota_k} : \iota_k \in F \})^* \simeq (\oplus_{\iota_k \in F} (M_k \oplus M_k)_{\iota_k} \slash N)^* \simeq N^\perp \simeq K.$$

Considering the duals of the above isomorphisms, we obtain a complete order isomorphism $$\Lambda : \oplus_1 \{ (M_k \oplus M_k)_{\iota_k} : \iota_k \in F \} \to K^*,$$ which maps each $\sum_{\iota_k \in F} \beta_{\iota_k} \in \oplus_1 \{ (M_k \oplus M_k)_{\iota_k} : \iota_k \in F \}$ to a functional $$(\alpha_{\iota_k}) \in K \mapsto \sum_{\iota_k \in F} {|F| \over 2k} {\rm tr} ( \beta_{\iota_k} \alpha_{\iota_k}^t) \in \mathbb C.$$
In particular, $\Lambda$ maps the order unit to the state $\omega$ on $K$ defined as $$\omega ((\alpha_{\iota_k}))=\sum_{\iota_k \in F} {1 \over 2k} {\rm tr} (\alpha_{\iota_k}).$$

It enables us to identify $$\oplus_1 \{ (M_k \oplus M_k)_{\iota_k} : \iota_k \in F \} \otimes_{\min} B(H) \simeq K^* \otimes_{\min} B(H),$$ where $K^*$ is an operator system with an order unit $\omega$. The linear map $\varphi : K \to B(H)$ corresponding to $z$ in a canonical way is completely positive \cite[Lemma 8.5]{KPTT2}. By the Arveson extension theorem, $\varphi : K \to B(H)$ extends to a completely positive map $\tilde{\varphi} : \oplus_{\iota_k \in F} (M_k \oplus M_k)_{\iota_k} \to B(H)$.
We have the commutative diagram
$$\xymatrix{\oplus_{\iota_k \in F} (M_k \oplus M_k)_{\iota_k} \ar[rr]^-{\Phi^{-1} \circ Q} \ar[d]_{\Gamma} && \oplus_1 \{ (M_k \oplus M_k)_{\iota_k} : \iota_k \in F \} \ar[d]^\Lambda \\ (\oplus_{\iota_k \in F} (M_k \oplus M_k)_{\iota_k})^* \ar[rr]^-R && K^*},$$ where $R$ denotes the restriction.
It follows that $$(\Phi^{-1} \circ Q) \otimes {\rm id} : \oplus_{\iota_k \in F} (M_k \oplus M_k)_{\iota_k} \otimes_{\min} B(H) \to \oplus_1 \{ (M_k \oplus M_k)_{\iota_k} : \iota_k \in F \} \otimes_{\min} B(H)$$ is a complete order quotient map. Maximal tensor products of complete order quotient maps are still complete order quotient maps \cite[Theorem 3.4]{H}. Hence, we obtain $$\xymatrix{\oplus_{\iota_k \in F} (M_k \oplus M_k)_{\iota_k} \otimes_{\min} B(H) \ar@{=}[r]\ar@{->>}[d]_{(\Phi^{-1} \circ Q) \otimes {\rm id}} & \oplus_{\iota_k \in F} (M_k \oplus M_k)_{\iota_k} \otimes_{\max} B(H) \ar@{->>}[d]^{(\Phi^{-1} \circ Q) \otimes {\rm id}} \\ \oplus_1 \{ (M_k \oplus M_k)_{\iota_k} : \iota_k \in F \} \otimes_{\min} B(H)  & \oplus_1 \{ (M_k \oplus M_k)_{\iota_k} : \iota_k \in F \} \otimes_{\max} B(H).}$$ The element $z$ is also positive in $\mathfrak C_I \otimes_{\max} B(H)$. The same arguments apply to all matricial levels.
\end{proof}

The maximal tensor product and the commuting tensor product are two different means of extending the $C^*$-maximal tensor product from the category of $C^*$-algebras to operator systems. For this reason, the weak expectation property of $C^*$-algebras bifurcates into the weak expectation property and the double commutant expectation property of operator systems. We say that an operator system $\mathcal S$ has the {\sl double commutant expectation property} provided that for every completely order isomorphic inclusion $\mathcal S \subset B(H)$, there exists a completely positive map $\varphi : B(H) \to \mathcal S''$ that fixes $\mathcal S$. For an operator system $\mathcal S$, the following are equivalent \cite[Theorem 7.6]{KPTT2}, \cite[Theorem 5.9]{Ka2}:
\begin{enumerate}
\item[(i)] $\mathcal S$ has the double commutant expectation property;
\item[(ii)] $\mathcal S$ is $({\rm el}, {\rm c})$-nuclear;
\item[(iii)] $\mathcal S \otimes_{\min} C^*(\mathbb F_\infty) = \mathcal S \otimes_{\max} C^*(\mathbb F_\infty)$;
\item[(iv)] $\mathcal S \otimes_{\min} (\ell^2_\infty \oplus_1 \ell^3_\infty) = \mathcal S \otimes_{\rm c} (\ell^2_\infty \oplus_1 \ell^3_\infty)$.
\end{enumerate}

\begin{thm}\label{DCEP}
An operator system $\mathcal S$ has the double commutant expectation property if and only if it satisfies $$\mathcal S \otimes_{\min} \mathfrak C_I = \mathcal S \otimes_{\rm c} \mathfrak C_I.$$
\end{thm}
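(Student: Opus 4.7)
My plan is to leverage the characterizations of the double commutant expectation property recorded before Theorem \ref{DCEP}. Specifically, I will use (ii), which says DCEP is the same as $({\rm el}, {\rm c})$-nuclearity, for the forward direction, and (iv), which identifies DCEP with $\mathcal S \otimes_{\min}(\ell^2_\infty \oplus_1 \ell^3_\infty) = \mathcal S \otimes_{\rm c}(\ell^2_\infty \oplus_1 \ell^3_\infty)$, for the converse.

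For the forward direction, assume $\mathcal S$ has DCEP, so $\mathcal S \otimes_{\rm el} \mathfrak C_I = \mathcal S \otimes_{\rm c} \mathfrak C_I$ by (ii). Fix a unital complete order embedding $\mathcal S \hookrightarrow B(H)$. The matrix ordering on $\mathcal S \otimes_{\rm el} \mathfrak C_I$ is by definition inherited from $B(H) \otimes_{\max} \mathfrak C_I$, which equals $B(H) \otimes_{\min} \mathfrak C_I$ by Theorem \ref{main}. Since $\otimes_{\min}$ is injective, $\mathcal S \otimes_{\min} \mathfrak C_I$ also inherits its matrix order from $B(H) \otimes_{\min} \mathfrak C_I$, so $\mathcal S \otimes_{\min} \mathfrak C_I = \mathcal S \otimes_{\rm el} \mathfrak C_I = \mathcal S \otimes_{\rm c} \mathfrak C_I$. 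This direction is essentially free once Theorem \ref{main} is in hand.

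For the converse, I will exhibit a unital complete order embedding $\iota : \ell^2_\infty \oplus_1 \ell^3_\infty \hookrightarrow \mathfrak C_I$ together with a unital completely positive retraction $P : \mathfrak C_I \to \ell^2_\infty \oplus_1 \ell^3_\infty$ satisfying $P \circ \iota = {\rm id}$. Granted such $\iota$ and $P$, functoriality of any tensor $\tau \in \{\min, {\rm c}\}$ produces UCP maps $\iota \otimes {\rm id}_{\mathcal S}$ and $P \otimes {\rm id}_{\mathcal S}$ between $(\ell^2_\infty \oplus_1 \ell^3_\infty) \otimes_\tau \mathcal S$ and $\mathfrak C_I \otimes_\tau \mathcal S$ whose composition is the identity; hence $\iota \otimes {\rm id}_{\mathcal S}$ is a complete order embedding for both $\tau = \min$ and $\tau = {\rm c}$. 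The hypothesis $\mathcal S \otimes_{\min} \mathfrak C_I = \mathcal S \otimes_{\rm c} \mathfrak C_I$ then collapses both restricted orderings on $(\ell^2_\infty \oplus_1 \ell^3_\infty) \otimes \mathcal S$ to a common one, and (iv) gives DCEP.

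The construction of $\iota$ and $P$ proceeds summand by summand. By Proposition \ref{matrix-coproduct}, $\ell^2_\infty = M_1 \oplus M_1 = \mathfrak C_{\iota_1}$ for some $\iota_1 \in I_1$, while $\ell^3_\infty$ embeds into $\mathfrak C_{\iota_2} = M_2 \oplus M_2$ via the diagonal $*$-embedding $\ell^3_\infty \hookrightarrow \ell^4_\infty \hookrightarrow M_2 \oplus M_2$, with UCP retraction given by composing the diagonal conditional expectation $M_2 \oplus M_2 \to \ell^4_\infty$ with the averaging map $(a,b,c,d) \mapsto (a,b,(c+d)/2)$. The universal property of the coproduct (Proposition \ref{coproduct}(iii)) then lifts these summand-wise data to a UCP map $\ell^2_\infty \oplus_1 \ell^3_\infty \to \mathfrak C_{\iota_1} \oplus_1 \mathfrak C_{\iota_2}$ with a UCP map in the opposite direction, whose composition agrees with the identity on each summand and therefore equals the identity by the uniqueness clause of the universal property. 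Proposition \ref{coproduct}(ii) places $\mathfrak C_{\iota_1} \oplus_1 \mathfrak C_{\iota_2}$ as a complete order subsystem of $\mathfrak C_I$, and a UCP retraction $\mathfrak C_I \to \mathfrak C_{\iota_1} \oplus_1 \mathfrak C_{\iota_2}$ is produced by invoking the universal property of $\mathfrak C_I$, using injectivity of each $\mathfrak C_{\iota_k} = M_k \oplus M_k$ to extend the identities on the two distinguished summands to UCP maps on all of $\mathfrak C_I$. Composing everything yields the desired $\iota$ and $P$. The one step requiring care is this preservation of ``complete order embedding with UCP retraction'' under coproducts, but once the universal property of $\oplus_1$ is invoked, the verification is formal; the substantive ingredients of the theorem are Theorem \ref{main} for the forward direction and the elementary retraction $\ell^3_\infty \hookrightarrow M_2 \oplus M_2$ for the converse.
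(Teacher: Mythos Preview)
Your proof is correct and follows essentially the same route as the paper's: the forward direction is identical, and the converse factors the identity on $\ell^2_\infty \oplus_1 \ell^3_\infty$ through $\mathfrak C_I$ via unital completely positive maps and then invokes Kavruk's criterion (iv). The only differences are cosmetic---you use the $I_1$ and $I_2$ summands where the paper uses $I_2$ and $I_3$---except that your appeal to ``injectivity of each $\mathfrak C_{\iota_k}$'' when building the retraction $\mathfrak C_I \to \mathfrak C_{\iota_1} \oplus_1 \mathfrak C_{\iota_2}$ is the wrong tool: injectivity produces maps \emph{into} $M_k \oplus M_k$, not maps from the remaining summands into the two-fold coproduct, and a pair of UCP maps $\mathfrak C_I \to \mathfrak C_{\iota_j}$ does not assemble into a UCP map to $\mathfrak C_{\iota_1} \oplus_1 \mathfrak C_{\iota_2}$. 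What is actually needed (and what the paper does) is simply to send each non-distinguished summand $(M_k \oplus M_k)_{\iota_k}$ to the target via $\alpha \mapsto \omega_{\iota_k}(\alpha)1$ for some state $\omega_{\iota_k}$, and then invoke the universal property of the coproduct $\mathfrak C_I$.
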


\begin{proof}
$\Rightarrow )$ Every operator system with the double commutant expectation property is $({\rm el}, {\rm c})$-nuclear. Since the minimal tensor product is injective \cite[Theorem 4.6]{KPTT1}, we have $$\xymatrix{B(H) \otimes_{\min} \mathfrak C_I \ar@{=}[r] & B(H) \otimes_{\max} \mathfrak C_I \\ \mathcal S \otimes_{\min} \mathfrak C_I \ar@{^{(}->}[u] & \mathcal S \otimes_{{\rm el}={\rm c}} \mathfrak C_I. \ar@{^{(}->}[u]}$$

$\Leftarrow )$ Fix two indices $\iota_2' \in I_2$ and $\iota_3' \in I_3$. Define a unital completely positive map $\Phi : \ell^2_\infty \oplus_1 \ell^3_\infty \to \mathfrak C_I$ by $$\begin{aligned} & \Phi((a_1,a_2)+(b_1,b_2,b_3)) \\ = & {\rm diag}(a_1,a_2,a_1,a_2)+{\rm diag}(b_1,b_2,b_3,b_1,b_2,b_3) \in (M_2 \oplus M_2)_{\iota_2'} \oplus_1 (M_3 \oplus M_3)_{\iota_3'} \subset \mathfrak C_I.\end{aligned}$$ For each index $\iota_k \ne \iota_2', \iota_3'$, we take a state $\omega_{\iota_k}$ on $(M_k \oplus M_k)_{\iota_k}$ and define a unital completely positive map $$\psi_{\iota_k} : (M_k \oplus M_k)_{\iota_k} \to \ell^2_\infty \oplus_1 \ell^3_\infty$$ as $\psi_{\iota_k}(\alpha) = \omega_{\iota_k} (\alpha) 1_{\ell^2_\infty \oplus_1 \ell^3_\infty}$. For $\iota_2'$ and $\iota_3'$, we also define unital completely positive maps $$\psi_{\iota_2'} : (M_2 \oplus M_2)_{\iota_2'} \to \ell^2_\infty \oplus_1 \ell^3_\infty \qquad \text{and}\qquad \psi_{\iota_3'} : (M_3 \oplus M_3)_{\iota_3'} \to \ell^2_\infty \oplus_1 \ell^3_\infty$$ as  $\psi_{\iota_2'}(\alpha \oplus \beta)=(\alpha_{11}, \alpha_{22})$ and $\psi_{\iota_3'}(\alpha \oplus \beta)=(\alpha_{11}, \alpha_{22},\alpha_{33})$. By the universal property of the coproduct, there exists a unital completely positive map $\Psi : \mathfrak C_I \to \ell^2_\infty \oplus_1 \ell^3_\infty$ that extends all $\psi_{\iota_k}$. The identity map on $\ell^2_\infty \oplus_1 \ell^3_\infty$ is factorized through unital completely positive maps as $${\rm id}_{\ell^2_\infty \oplus_1 \ell^3_\infty} = \Psi \circ \Phi.$$

By the hypothesis, we have completely positive maps
$$\xymatrix{\mathcal S \otimes_{\min} (\ell^2_\infty \oplus_1 \ell^3_\infty) \ar[r]^-{{\rm id}_{\mathcal S} \otimes \Phi} & \mathcal S \otimes_{\min} \mathfrak C_I = \mathcal S \otimes_c \mathfrak C_I \ar[r]^-{{\rm id}_{\mathcal S} \otimes \Psi} & \mathcal S \otimes_{\rm c} (\ell^2_\infty \oplus_1 \ell^3_\infty).}$$
Since the positive cone of the commuting tensor product is the subcone of that of the minimal tensor product at each matrix level, we have $$\mathcal S \otimes_{\min} (\ell^2_\infty \oplus_1 \ell^3_\infty) = \mathcal S \otimes_{\rm c} (\ell^2_\infty \oplus_1 \ell^3_\infty).$$ By \cite[Theorem 5.9]{Ka2}, $\mathcal S$ has the double commutant expectation property.
\end{proof}

Since the maximal tensor product and the commuting tensor product are two different means of extending the $C^*$-maximal tensor product from the category of $C^*$-algebras to operator systems, we can regard $$\mathfrak C_I \otimes_{\min} \mathfrak C_I =\mathfrak C_I \otimes_{\max} \mathfrak C_I \qquad \text{and} \qquad \mathfrak C_I \otimes_{\min} \mathfrak C_I =\mathfrak C_I \otimes_c \mathfrak C_I$$ as operator system analogues of Kirchberg's conjecture $$C^*(\mathbb F) \otimes_{\min} C^*(\mathbb F) = C^*(\mathbb F) \otimes_{\max} C^*(\mathbb F).$$ The former is not true and the latter is equivalent to the Kirchberg's conjecture itself.

\begin{cor}\label{CC}
\begin{enumerate}
\item[(i)] $\mathfrak C_I \otimes_{c} \mathfrak C_I \ne \mathfrak C_I \otimes_{\max} \mathfrak C_I$. In particular, $\mathfrak C_I \otimes_{\min} \mathfrak C_I \ne \mathfrak C_I \otimes_{\max} \mathfrak C_I$.
\item[(ii)] The Kirchberg's conjecture has an affirmative answer if and only if $$\mathfrak C_I \otimes_{\min} \mathfrak C_I =\mathfrak C_I \otimes_c \mathfrak C_I.$$
\end{enumerate}
\end{cor}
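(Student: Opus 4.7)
The plan is to use Theorem~\ref{DCEP} to reduce (ii) to showing that $\mathfrak{C}_I$ has the double commutant expectation property (DCEP) if and only if Kirchberg's conjecture holds. For ``KC $\Rightarrow$ DCEP of $\mathfrak{C}_I$'', I would start from WEP of $C^*(\mathbb F)$ under KC and use Theorem~\ref{cover}(iv), which places $\mathfrak{C}_I$ as a relatively weakly injective subsystem of the free product $\ast_{k,\iota_k}(M_k \oplus M_k)_{\iota_k}$. Each factor is finite dimensional, hence has WEP, and the free product has LLP; since under KC every LLP $C^*$-algebra has WEP, the ambient $C^*$-algebra has WEP, and relative weak injectivity transfers this to DCEP for $\mathfrak{C}_I$.

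For the converse ``DCEP $\Rightarrow$ KC'', I would assume $\mathfrak{C}_I$ has DCEP, which gives $\mathfrak{C}_I \otimes_{\min} C^*(\mathbb F) = \mathfrak{C}_I \otimes_{\max} C^*(\mathbb F)$ for every free group $\mathbb F$. Choosing $I$ large enough that Theorem~\ref{universal} furnishes a complete order quotient $Q : \mathfrak{C}_I \twoheadrightarrow C^*(\mathbb F)$, the quotient functoriality of $\otimes_{\max}$ (\cite[Theorem 3.4]{H}) yields a complete order quotient $(Q \otimes {\rm id})_{\max} : \mathfrak{C}_I \otimes_{\max} C^*(\mathbb F) \twoheadrightarrow C^*(\mathbb F) \otimes_{\max} C^*(\mathbb F)$. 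Combining this with the functorial minimal ucp surjection and using the OSLP of $\mathfrak{C}_I$ (Theorem~\ref{LP}) to lift positive elements across $Q$ on the minimal side (with the help of the completely order proximinal kernel from Theorem~\ref{universal}), the identification min${}={}$max descends to $C^*(\mathbb F) \otimes_{\min} C^*(\mathbb F) = C^*(\mathbb F) \otimes_{\max} C^*(\mathbb F)$, which is exactly KC.

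For part (i), I would argue by contradiction: if $\mathfrak{C}_I \otimes_{\rm c} \mathfrak{C}_I = \mathfrak{C}_I \otimes_{\max} \mathfrak{C}_I$, then using Theorem~\ref{universal} to realize an arbitrary operator system $\mathcal T$ as a complete order quotient of $\mathfrak{C}_I$ and the functoriality of both max (via \cite[Theorem 3.4]{H}) and c (via its commuting-maps definition) under complete order quotients in both arguments, the equality $\otimes_{\rm c} = \otimes_{\max}$ would propagate to every pair $(\mathcal T, \mathcal T)$, contradicting the explicit examples of Kavruk separating c from max on finite dimensional operator systems. The ``in particular'' clause is then immediate from the positive-cone ordering $\otimes_{\max} \le \otimes_{\rm c} \le \otimes_{\min}$, since equality of the two extremes forces equality of all three. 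The hardest step will be the descent argument in (ii)$\Leftarrow$, upgrading $(Q \otimes {\rm id})_{\min}$ to a complete order quotient via OSLP; the analogous step for c in part (i) is likewise the delicate point, as c is defined via commuting representations rather than by matrix level conditions.
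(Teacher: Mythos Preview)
Your overall strategy for (ii) --- reducing to whether $\mathfrak C_I$ has DCEP --- matches the paper, but both parts contain a real gap at exactly the points you flag as ``delicate.''

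\textbf{Part (i).} You need $Q\otimes Q:\mathfrak C_I\otimes_{\rm c}\mathfrak C_I\to\mathcal T\otimes_{\rm c}\mathcal T$ to be a complete order quotient map. Projectivity of $\otimes_{\max}$ is \cite[Theorem~3.4]{H}, but no such result is available for $\otimes_{\rm c}$; the commuting tensor is defined through representations with commuting ranges, and there is no mechanism to lift a positive element of $\mathcal T\otimes_{\rm c}\mathcal T$ to one of $\mathfrak C_I\otimes_{\rm c}\mathfrak C_I$. The paper sidesteps this entirely: rather than using $\mathfrak C_I\twoheadrightarrow\mathcal T$, it exhibits ${\rm NC}(2)=\ell^2_\infty\oplus_1\ell^2_\infty$ as a \emph{ucp-complemented} subsystem of $\mathfrak C_I$, i.e.\ ${\rm id}_{{\rm NC}(2)}=\Psi\circ\Phi$ with $\Phi,\Psi$ unital completely positive. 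Then only \emph{functoriality} of ${\rm c}$ and $\max$ under ucp maps is needed (both are functorial), and one reads off ${\rm NC}(2)\otimes_{\rm c}{\rm NC}(2)={\rm NC}(2)\otimes_{\max}{\rm NC}(2)$, contradicting \cite[Corollary~7.12]{FKPT}. The moral: pass to a complemented witness rather than a quotient.

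\textbf{Part (ii), direction DCEP $\Rightarrow$ KC.} You propose to descend $\mathfrak C_I\otimes_{\min}C^*(\mathbb F)=\mathfrak C_I\otimes_{\max}C^*(\mathbb F)$ through $Q\otimes{\rm id}$. On the $\max$ side this is fine, but on the $\min$ side you need $(Q\otimes{\rm id})_{\min}$ to be a complete order quotient, which would require $C^*(\mathbb F)$ to be $1$-exact --- and it is not. The OSLP of $\mathfrak C_I$ (Theorem~\ref{LP}) concerns lifting ucp maps \emph{into} $C^*$-quotients; it says nothing about lifting positive elements of $C^*(\mathbb F)\otimes_{\min}C^*(\mathbb F)$ back to $\mathfrak C_I\otimes_{\min}C^*(\mathbb F)$, and the completely order proximinal kernel of $Q$ does not help either, since positivity in $\min$ is tested against all ucp maps on both factors. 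The paper avoids this descent altogether by chaining two applications of Theorem~\ref{DCEP} with two results of Kavruk: KC $\Leftrightarrow$ DCEP of $\ell^2_\infty\oplus_1\ell^3_\infty$ \cite[Theorem~5.14]{Ka2}, and DCEP of $\mathcal S$ $\Leftrightarrow$ $\mathcal S\otimes_{\min}(\ell^2_\infty\oplus_1\ell^3_\infty)=\mathcal S\otimes_{\rm c}(\ell^2_\infty\oplus_1\ell^3_\infty)$ \cite[Theorem~5.9]{Ka2}. Your forward direction (KC $\Rightarrow$ DCEP of $\mathfrak C_I$) via relative weak injectivity in the free product is workable but needs the extra input that unital free products preserve LLP; the paper's route requires no such auxiliary fact.
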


\begin{proof}
(i) Similarly as the proof of Theorem \ref{DCEP}, we can show that the identity map on $\ell^2_\infty \oplus_1 \ell^2_\infty$ is factorized as $${\rm id}_{\ell^2_\infty \oplus_1 \ell^2_\infty} = \Psi \circ \Phi$$ for unital completely positive maps $\Phi : \ell^2_\infty \oplus_1 \ell^2_\infty \to \mathfrak C_I$ and $\Psi : \mathfrak C_I \to \ell^2_\infty \oplus_1 \ell^2_\infty$. Assume to the contrary that $\mathfrak C_I \otimes_{\rm c} \mathfrak C_I = \mathfrak C_I \otimes_{\max} \mathfrak C_I$. Then, we have completely positive maps
$$\xymatrix{(\ell^2_\infty \oplus_1 \ell^2_\infty) \otimes_c (\ell^2_\infty \oplus_1 \ell^2_\infty) \ar[r]^-{\Phi \otimes \Phi} & \mathfrak C_I \otimes_{\rm c} \mathfrak C_I = \mathfrak C_I \otimes_{\max} \mathfrak C_I \ar[r]^-{\Psi \otimes \Psi} & (\ell^2_\infty \oplus_1 \ell^2_\infty) \otimes_{\max} (\ell^2_\infty \oplus_1 \ell^2_\infty)}.$$
Since the positive cone of the maximal tensor product at each matrix level is the subcone of that of the commuting tensor product, we have $$(\ell^2_\infty \oplus_1 \ell^2_\infty) \otimes_{\rm c} (\ell^2_\infty \oplus_1 \ell^2_\infty) = (\ell^2_\infty \oplus_1 \ell^2_\infty) \otimes_{\max} (\ell^2_\infty \oplus_1 \ell^2_\infty).$$

This contradicts $${\rm NC}(2) \otimes_{\rm c} {\rm NC}(2) \ne {\rm NC}(2) \otimes_{\max} {\rm NC}(2),$$ which was shown in \cite[Corollary 7.12]{FKPT}. Here, ${\rm NC}(n)$ is defined as the operator subsystem ${\rm span} \{ 1, h_1, \cdots, h_n\}$ of the universal $C^*$-algebra generated by self-adjoint contractions $h_1, \cdots, h_n$ as in Definition 6.1 of the same paper. It is unitally completely order isomorphic to the coproduct (involving $n$ terms) $$\ell_\infty^2 \oplus_1 \cdots \oplus_1 \ell_\infty^2.$$

(ii) By \cite[Theorem 5.14]{Ka2}, Kirchberg's conjecture has an affirmative answer if and only if $\ell^2_\infty \oplus_1 \ell^3_\infty$ has the double commutant expectation property. By Theorem \ref{DCEP}, this is equivalent to $(\ell^2_\infty \oplus_1 \ell^3_\infty) \otimes_{\min} \mathfrak C_I = (\ell^2_\infty \oplus_1 \ell^3_\infty) \otimes_c \mathfrak C_I$. By \cite[Theorem 5.9]{Ka2}, this is equivalent to $\mathfrak C_I$ having the double commutant expectation property, and another application of Theorem \ref{DCEP} gives the equivalence with $\mathfrak C_I \otimes_{\min} \mathfrak C_I =\mathfrak C_I \otimes_c \mathfrak C_I$.
\end{proof}

We say that an operator subsystem $\mathcal S$ of a unital $C^*$-algebra $\mathcal A$ {\em contains
enough unitaries} if the unitaries in $\mathcal S$ generate $\mathcal A$ as a $C^*$-algebra. If $\mathcal S \subset \mathcal A$ contains enough unitaries and $\mathcal S \otimes_{\min} \mathcal B \hookrightarrow \mathcal A \otimes_{\max} \mathcal B$ completely order isomorphically for a unital $C^*$-algebra $\mathcal B$, then we have $\mathcal A \otimes_{\min} \mathcal B = \mathcal A \otimes_{\max} \mathcal B$ \cite[Proposition 9.5]{KPTT2}.

\begin{cor}[Kirchberg]\label{Kirchberg}
Let $\mathbb F_{\infty}$ be a free group on a countably infinite number of
generators and $H$ be a Hilbert space. We have $$C^*(\mathbb F_\infty) \otimes_{\min} B(H) = C^*(\mathbb F_\infty) \otimes_{\max} B(H).$$
\end{cor}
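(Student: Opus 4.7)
The plan is to apply \cite[Proposition~9.5]{KPTT2}, cited just before the statement, to $\mathcal A = C^*(\mathbb F_\infty)$ and $\mathcal B = B(H)$, with the operator subsystem
$$\mathcal S = \mathrm{span}\{1, u_i, u_i^* : i \in \mathbb N\} \subset C^*(\mathbb F_\infty),$$
where $\{u_i\}$ are the free unitary generators. Since these unitaries already generate $C^*(\mathbb F_\infty)$ as a $C^*$-algebra, $\mathcal S$ contains enough unitaries, and the whole statement reduces to establishing the complete order embedding
$$\mathcal S \otimes_{\min} B(H) \hookrightarrow C^*(\mathbb F_\infty) \otimes_{\max} B(H).$$

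To produce this embedding I would combine Theorem~\ref{main} with Theorem~\ref{universal}. The latter furnishes a complete order quotient map $Q : \mathfrak C_I \twoheadrightarrow \mathcal S$ for a sufficiently large index set $I$; composing with the inclusion $\iota : \mathcal S \hookrightarrow C^*(\mathbb F_\infty)$ gives a unital completely positive map $\pi := \iota \circ Q$. Functoriality of the maximal tensor product together with Theorem~\ref{main} produces the ucp map
$$\pi\otimes\mathrm{id}\;:\;\mathfrak C_I\otimes_{\min} B(H)\;=\;\mathfrak C_I\otimes_{\max} B(H)\;\longrightarrow\;C^*(\mathbb F_\infty)\otimes_{\max} B(H),$$
while functoriality of the minimal tensor product produces the ucp surjection $Q\otimes\mathrm{id} : \mathfrak C_I\otimes_{\min} B(H) \twoheadrightarrow \mathcal S\otimes_{\min} B(H)$. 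Given $x \in M_n(\mathcal S\otimes_{\min} B(H))^+$, if one can find a positive lift $\widetilde x \in M_n(\mathfrak C_I\otimes_{\min} B(H))^+$ with $(Q\otimes\mathrm{id})(\widetilde x) = x$, then $(\pi\otimes\mathrm{id})(\widetilde x)$ witnesses the positivity of $x$ in $C^*(\mathbb F_\infty)\otimes_{\max} B(H)$. The reverse direction of the desired embedding is automatic from the injectivity of $\otimes_{\min}$ and the general inclusion $\otimes_{\max}\subseteq\otimes_{\min}$ at the level of cones.

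The main obstacle is precisely the positive lifting step: a priori, $Q\otimes\mathrm{id}$ is only a ucp surjection on the minimal side, since the minimal tensor product does not generally preserve complete order quotients when tensored with the non-exact algebra $B(H)$. Theorem~\ref{main} is exactly what allows me to sidestep this obstruction. Viewed as $\mathfrak C_I\otimes_{\max} B(H) \twoheadrightarrow \mathcal S\otimes_{\max} B(H)$, the same linear map is a complete order quotient by \cite[Theorem~3.4]{H}, and since $\mathfrak C_I\otimes_{\min} B(H)$ and $\mathfrak C_I\otimes_{\max} B(H)$ have identical positive cones by Theorem~\ref{main}, the operator system quotient structure induced on $\mathcal S\otimes B(H)$ from $\mathfrak C_I\otimes_{\min} B(H)$ coincides with $\mathcal S\otimes_{\max} B(H)$. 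I would then pass to the commuting description of the maximal tensor product, valid because $\otimes_{\max} = \otimes_{\mathrm c}$ whenever one factor is a $C^*$-algebra: an arbitrary commuting ucp pair $\alpha : C^*(\mathbb F_\infty) \to B(K)$, $\beta : B(H) \to B(K)$ pulls back through $Q$ to a commuting ucp pair $(\alpha\circ\pi, \beta)$ on $\mathfrak C_I$ and $B(H)$, so positivity of a suitable positive lift in $\mathfrak C_I\otimes_{\min} B(H) = \mathfrak C_I\otimes_{\mathrm c} B(H)$ transfers to positivity of $x$ against $(\alpha,\beta)$, establishing positivity in $C^*(\mathbb F_\infty)\otimes_{\max} B(H)$. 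With the embedding in place, \cite[Proposition~9.5]{KPTT2} immediately yields $C^*(\mathbb F_\infty)\otimes_{\min} B(H) = C^*(\mathbb F_\infty)\otimes_{\max} B(H)$.
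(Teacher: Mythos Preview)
Your argument is circular at exactly the point you flag as the main obstacle. You correctly observe that $Q\otimes\mathrm{id}$, viewed on the $\max$ side, is a complete order quotient onto $\mathcal S\otimes_{\max}B(H)$, and hence (via Theorem~\ref{main}) the quotient structure that $\mathfrak C_I\otimes_{\min}B(H)$ induces on $\mathcal S\otimes B(H)$ is $\mathcal S\otimes_{\max}B(H)$. But this means that an element $x$ which is only known to be positive in $\mathcal S\otimes_{\min}B(H)$ has \emph{no} guaranteed positive lift to $\mathfrak C_I\otimes_{\min}B(H)$; a positive lift exists precisely when $x$ is positive in $\mathcal S\otimes_{\max}B(H)$, which is what you are trying to prove. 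Your final sentence (``positivity of a suitable positive lift \ldots\ transfers to positivity of $x$'') assumes such a lift, so the argument closes on itself. Pulling back the commuting pair $(\alpha,\beta)$ through $Q$ does not help, because testing $x$ against $(\alpha\circ\pi,\beta)$ on $\mathfrak C_I$ still requires a preimage of $x$ that is positive in $\mathfrak C_I\otimes_{\min}B(H)$.

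The paper avoids this by working not with a mere quotient of $\mathfrak C_I$ but with an operator system that is \emph{complemented} in $\mathfrak C_I$: the identity on $\ell^2_\infty\oplus_1\ell^3_\infty$ factors as $\Psi\circ\Phi$ through $\mathfrak C_I$ by unital completely positive maps in both directions. The ucp map $\Phi$ into $\mathfrak C_I$ is what supplies the missing lift: one simply pushes a $\min$-positive element forward by $\Phi\otimes\mathrm{id}$ into $\mathfrak C_I\otimes_{\min}B(H)=\mathfrak C_I\otimes_{\max}B(H)$ and then back by $\Psi\otimes\mathrm{id}$ into the $\max$ tensor product. Since $\ell^2_\infty\oplus_1\ell^3_\infty$ sits inside $C^*(\mathbb Z_2\ast\mathbb Z_3)$ with enough unitaries, \cite[Proposition~9.5]{KPTT2} yields $C^*(\mathbb Z_2\ast\mathbb Z_3)\otimes_{\min}B(H)=C^*(\mathbb Z_2\ast\mathbb Z_3)\otimes_{\max}B(H)$, and the passage to $C^*(\mathbb F_\infty)$ then uses that $\mathbb F_\infty$ embeds in $\mathbb Z_2\ast\mathbb Z_3$ with $C^*(\mathbb F_\infty)$ complemented by a ucp projection. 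Your choice $\mathcal S=\mathrm{span}\{1,u_i,u_i^*\}$ has no evident ucp section into $\mathfrak C_I$, so the same device is not available.
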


\begin{proof}
Since the identity map on $\ell^2_\infty \oplus_1 \ell^3_\infty$ is factorized through $\mathfrak C_I$ by unital completely positive maps, Theorem \ref{main} immediately implies that $$(\ell^2_\infty \oplus_1 \ell^3_\infty) \otimes_{\min} B(H) = (\ell^2_\infty \oplus_1 \ell^3_\infty) \otimes_{\max} B(H).$$ Alternatively, applying the proof of Theorem \ref{main} to commutative algebras instead of matrix algebras, we obtain $$\xymatrix{\ell^5_\infty \otimes_{\min} B(H) \ar@{=}[r]\ar@{->>}[d] & \ell^5_\infty \otimes_{\max} B(H) \ar@{->>}[d] \\ (\ell^2_\infty \oplus_1 \ell^3_\infty) \otimes_{\min} B(H)  & (\ell^2_\infty \oplus_1 \ell^3_\infty) \otimes_{\max} B(H).}$$ For the remaining proof, we follow Kavruk's paper \cite{Ka2}. Since $$\ell^2_\infty \oplus_1 \ell^3_\infty \subset C^*(\mathbb Z_2 \ast \mathbb Z_3)$$ contains enough unitaries \cite[Theorem 4.8]{Ka2}, we have $$C^*(\mathbb Z_2 \ast \mathbb Z_3) \otimes_{\min} B(H) = C^*(\mathbb Z_2 \ast \mathbb Z_3) \otimes_{\max} B(H)$$ by \cite[Proposition 9.5]{KPTT2}. The free group $\mathbb F_\infty$ embeds into the free product $\mathbb Z_2 \ast \mathbb Z_3$ \cite{LH}. By \cite[Proposition 8.8]{P3}, $C^*(\mathbb F_\infty)$ is a $C^*$-subalgebra of $C^*(\mathbb Z_2 \ast \mathbb Z_3)$ complemented by a unital completely positive map.
\end{proof}

A wide class of operator systems shares the properties of $\mathfrak C_I$. Let $\mathcal M = \{ \mathcal M_k \}_{k \in \mathbb N}$ be a sequence of direct sums of matrix algebras such that $$\limsup_{k \to \infty} s(k) = \infty$$ when $\mathcal M_k = M_{d_1} \oplus \cdots \oplus M_{d_n}$ and $s(k) = \max \{d_1, \cdots, d_n \}$. Let $\mathcal M_{\iota_k}$ denote the copy of $\mathcal M_k$ for each index $\iota_k \in I_k$. We define the operator system $\mathfrak C_I (\mathcal M)$ (respectively $\mathfrak C_1 (\mathcal M)$) as the coproduct $$\oplus_1 \{ \mathcal M_{\iota_k} : k \in \mathbb N, \iota_k \in I_k \} \qquad (\text{respectively}~ \oplus_1 \{ \mathcal M_k : k \in \mathbb N \}).$$
In particular, we have $\mathfrak C_I = \mathfrak C_I (\mathcal M)$ and $\mathfrak C_1 = \mathfrak C_1 (\mathcal M)$ when $\mathcal M_k = M_k \oplus M_k$.

\begin{thm}
Suppose that $\mathcal S$ is an operator system. Then,
\begin{enumerate}
\item[(i)] if $\mathcal S^+_{\|\cdot\| \le 1}$ is indexed by a set $I$, then $\mathcal S$ is an operator system quotient of $\mathfrak C_I(\mathcal M)$;
\item[(ii)] if $\mathcal S$ is a countable union of its finite dimensional subsystems, then $\mathcal S$ is an operator system quotient of $\mathfrak C_1 (\mathcal M)$;
\item[(iii)] $\mathfrak C_I(\mathcal M)$ satisfies the operator system lifting property;
\item[(iv)] $\mathfrak C_I (\mathcal M) \otimes_{\min} B(H) = \mathfrak C_I (\mathcal M) \otimes_{\max} B(H);$
\item[(v)] $\mathcal S$ has the double commutant expectation property if and only if $\mathcal S \otimes_{\min} \mathfrak C_I (\mathcal M) = \mathcal S \otimes_{\rm c} \mathfrak C_I (\mathcal M)$;
\item[(vi)] the Kirchberg's conjecture has an affirmative answer if and only if $\mathfrak C_I (\mathcal M) \otimes_{\min} \mathfrak C_I (\mathcal M) = \mathfrak C_I (\mathcal M) \otimes_c \mathfrak C_I (\mathcal M)$.
\end{enumerate}
\end{thm}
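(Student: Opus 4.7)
The plan is to establish that $\mathfrak{C}_I$ and $\mathfrak{C}_I(\mathcal{M})$ are mutually unital completely positive (u.c.p.) retracts of each other, and then to transfer each of (i)--(vi) from the corresponding statements for $\mathfrak{C}_I$ already proved in Theorems~\ref{universal}, \ref{N}, \ref{LP}, \ref{main}, \ref{DCEP}, and Corollary~\ref{CC}.

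For the retraction $\mathfrak{C}_I \rightleftarrows \mathfrak{C}_I(\mathcal{M})$, the hypothesis $\limsup_m s(m) = \infty$ lets me select distinct indices $m_k \in \mathbb{N}$ for each $k$ such that $\mathcal{M}_{m_k}$ contains a matrix summand $M_{D_k}$ with $D_k \geq 2k$. The map $(A,B) \mapsto \mathrm{diag}(A, B, \omega(A \oplus B) I_{D_k - 2k})$ (with $\omega$ a state) exhibits $M_k \oplus M_k$ as a u.c.p.\ retract of $M_{D_k}$ via compression to the two leading $k \times k$ diagonal blocks, while $M_{D_k}$ is a u.c.p.\ retract of $\mathcal{M}_{m_k}$ by filling in states on the other summands and projecting back. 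Choosing an injection $I_k \hookrightarrow I_{m_k}$, which is available since both sets have cardinality $|I|$, the universal property of the coproduct in Proposition~\ref{coproduct}~(iii) assembles these into u.c.p.\ maps $\tilde{J} : \mathfrak{C}_I \to \mathfrak{C}_I(\mathcal{M})$ and $\tilde{Q} : \mathfrak{C}_I(\mathcal{M}) \to \mathfrak{C}_I$ with $\tilde{Q} \circ \tilde{J} = \mathrm{id}_{\mathfrak{C}_I}$, where on unused summands $\tilde{Q}$ is defined through a state. For the reverse retraction, each $\mathcal{M}_k = M_{d_1} \oplus \cdots \oplus M_{d_n}$ is a u.c.p.\ retract of $M_{D'} \oplus M_{D'}$ with $D' = d_1 + \cdots + d_n$: the block-diagonal $\ast$-embedding $\mathcal{M}_k \hookrightarrow M_{D'}$ followed by $A \mapsto (A, A)$ has a u.c.p.\ left inverse given by first projection and diagonal compression. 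The coproduct construction again yields u.c.p.\ maps $\tilde{J}' : \mathfrak{C}_I(\mathcal{M}) \to \mathfrak{C}_I$ and $\tilde{Q}' : \mathfrak{C}_I \to \mathfrak{C}_I(\mathcal{M})$ with $\tilde{Q}' \circ \tilde{J}' = \mathrm{id}_{\mathfrak{C}_I(\mathcal{M})}$.

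Once both retractions are in place the six claims are quick. For (i) and (ii), $\tilde{Q}$ is a complete order quotient because $\tilde{J}$ lifts positive elements, so composing with the complete order quotient maps from Theorems~\ref{universal} and~\ref{N} produces quotient maps $\mathfrak{C}_I(\mathcal{M}) \twoheadrightarrow \mathcal{S}$. For (iii), Theorem~\ref{LP} lifts $\varphi \circ \tilde{Q}' : \mathfrak{C}_I \to \mathcal{A}/\mathcal{I}$ to a u.c.p.\ map $\tilde{\psi} : \mathfrak{C}_I \to \mathcal{A}$, and then $\tilde{\psi} \circ \tilde{J}' : \mathfrak{C}_I(\mathcal{M}) \to \mathcal{A}$ lifts $\varphi$. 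For (iv), $\tilde{J}' \otimes \mathrm{id}_{B(H)}$ and $\tilde{Q}' \otimes \mathrm{id}_{B(H)}$ realise $\mathfrak{C}_I(\mathcal{M}) \otimes_\tau B(H)$ as a u.c.p.\ retract of $\mathfrak{C}_I \otimes_\tau B(H)$ for both $\tau \in \{\min, \max\}$, so the collapse of Theorem~\ref{main} descends. For (v), the forward direction runs as in Theorem~\ref{DCEP} with (iv) used in place of Theorem~\ref{main}, while the converse uses that $\ell^2_\infty \oplus_1 \ell^3_\infty$ is itself a u.c.p.\ retract of $\mathfrak{C}_I(\mathcal{M})$ (route $\ell^n_\infty$ through an $M_n$-summand of some $\mathcal{M}_m$, which exists by $\limsup s(k) = \infty$) together with \cite[Theorem~5.9]{Ka2}. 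Finally, (vi) follows by applying (v) with $\mathcal{S} = \mathfrak{C}_I(\mathcal{M})$ and invoking Corollary~\ref{CC}, since the DCEP itself passes between $\mathfrak{C}_I$ and $\mathfrak{C}_I(\mathcal{M})$ under the mutual u.c.p.\ retraction.

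The main obstacle is purely organizational: one must carry the index bookkeeping carefully so that distinct summands of $\mathfrak{C}_I(\mathcal{M})$ serve as the targets of $\tilde{J}$ for distinct copies of $M_k \oplus M_k$ in $\mathfrak{C}_I$ (and symmetrically for the other retraction), and one must verify that each property in play---being a complete order quotient, the operator system lifting property, and a min/max or min/c tensor collapse against a fixed operator system---is genuinely preserved under u.c.p.\ retracts. No analytic ingredient beyond those already developed for $\mathfrak{C}_I$ is needed.
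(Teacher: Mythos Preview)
Your proof is correct. For parts (i) and (ii) it coincides with the paper's argument: both construct the one-sided retraction $\tilde Q\circ\tilde J=\mathrm{id}_{\mathfrak C_I}$ by embedding $M_k\oplus M_k$ into a large matrix summand of some $\mathcal M_{m_k}$, and then compose with the quotient maps of Theorems~\ref{universal} and~\ref{N}.

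The genuine difference lies in (iii)--(vi). The paper does \emph{not} build your reverse retraction $\tilde Q'\circ\tilde J'=\mathrm{id}_{\mathfrak C_I(\mathcal M)}$; instead it simply asserts that the proofs of Theorems~\ref{LP}, \ref{main}, \ref{DCEP} and Corollary~\ref{CC} ``work generally for coproducts of direct sums of matrix algebras'' (noting for the last two that $\ell_\infty^2\oplus_1\ell_\infty^3$ still factors through). In other words, the paper re-runs each earlier argument with $M_k\oplus M_k$ replaced by $\mathcal M_k$, while you transfer the finished statements formally via the two-sided retraction. Your route is more uniform and makes the equivalence between $\mathfrak C_I$ and $\mathfrak C_I(\mathcal M)$ explicit, at the cost of the index bookkeeping you flagged (for each $\mathcal M_k$ you must choose a \emph{distinct} summand $(M_j\oplus M_j)_{\iota_j}$ of $\mathfrak C_I$ with $j\ge d_1+\cdots+d_n$, which is possible for any nonempty $I$ by a straightforward recursive choice). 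The paper's route is shorter but asks the reader to revisit four proofs. Both are valid; yours has the merit of isolating exactly which abstract property---stability under u.c.p.\ retracts---is doing the work in each item.
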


\begin{proof}
(i) We take a subsequence $\{\mathcal M_{n_k}\}_{k \in \mathbb N}$ so that $s(n_k) \ge 2k$ and put $\mathcal N_k = \mathcal M_{n_k}$. By Proposition \ref{coproduct} (ii), $\mathfrak C_I (\mathcal N)$ is an operator subsystem of $\mathfrak C_I (\mathcal M)$. Take a state $\omega_l$ on $\mathcal M_l$ for each $l \ne n_k$. By the universal property of the coproduct, there exists a unital completely positive map  $P : \mathfrak C_I (\mathcal M) \to \mathfrak C_I (\mathcal N)$ such that
$$P(x) = \begin{cases} x & \text{if}~ x \in \mathcal M_{n_k} \\ \omega_l (x) 1 & \text{if}~ x \in \mathcal M_l, l \ne n_k. \end{cases}$$
Since $P$ is a unital completely positive projection, $\mathfrak C_I(\mathcal N)$ is an operator system quotient of $\mathfrak C_I(\mathcal M)$.

We may assume that $s(k) \ge 2k$ for each $k \in \mathbb N$. We write $\mathcal M_k = M_{d_1} \oplus \cdots \oplus M_{d_m}$ and $d_l \ge 2k$. The identity map on $M_k \oplus M_k$ is factorized as ${\rm id}_{M_k \oplus M_k} = Q_k \circ J_k$ for the unital completely positive maps
$$J_k : A \in M_k \oplus M_k \mapsto (\omega (A)I_{d_1 + \cdots + d_{l-1}}) \oplus A \oplus (\omega (A)I_{(d_l - 2k) + d_{l+1} + \cdots +d_m}) \in \mathcal M_k$$
(where $\omega$ is a state on $M_k \oplus M_k$) and $$Q_k : A_1 \oplus \cdots \oplus A_m \in \mathcal M_k \mapsto [(A_l)_{i,j}]_{1 \le i,j \le k} \oplus [(A_l)_{i+k,j+k}]_{1 \le i,j \le k} \in M_k \oplus M_k.$$
Let $J : \mathfrak C_I \to \mathfrak C_I (\mathcal M)$ (respectively $Q : \mathfrak C_I (\mathcal M) \to \mathfrak C_I$) be the unital completely positive extension of $J_{\iota_k} : (M_k \oplus M_k)_{\iota_k} \to \mathcal M_{\iota_k}$ (respectively $Q_{\iota_k} : \mathcal M_{\iota_k} \to (M_k \oplus M_k)_{\iota_k}$) over $k \in \mathbb N, \iota_k \in I_k$. Then, the identity map on $\mathfrak C_I$ is factorized as ${\rm id}_{\mathfrak C_I} = Q \circ J$. Hence, $\mathfrak C_I$ is an operator system quotient of $\mathfrak C_I(\mathcal M)$.

(ii) By Theorem \ref{N}, $\mathcal S$ is an operator system quotient of $\mathfrak C_1$. The remaining proof is similar to (i).

(iii), (iv) The proofs of Theorem \ref{LP} and Theorem \ref{main} work generally for coproducts of direct sums of matrix algebras.

(v), (vi) The proofs of Theorem \ref{DCEP} and Corollary \ref{CC} work generally for coproducts of direct sums of matrix algebras which the identity map on $\ell^\infty_2 \oplus_1 \ell^\infty_3$ factorizes through.
\end{proof}

\section{Liftings of completely positive maps}

It is natural to ask whether the universal operator system $\mathfrak C_I$ is a projective object in the category of operator systems. In other words, for any operator system $\mathcal S$ and its kernel $\mathcal J$, does every unital completely positive map $\varphi : \mathfrak C_I \to \mathcal S \slash \mathcal J$ lift to a unital completely positive map $\tilde{\varphi} : \mathfrak C_I \to \mathcal S$? The answer is negative in an extreme manner.

\begin{prop}
An operator system $\mathcal S$ is one-dimensional if and only if for any operator system $\mathcal T$ and its kernel $\mathcal J$, every unital completely positive map $\varphi : \mathcal S \to \mathcal T \slash \mathcal J$ lifts to a completely positive map $\tilde{\varphi} : \mathcal S \to \mathcal T$.
\end{prop}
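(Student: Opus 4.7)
My plan is the following. The ``if'' direction is immediate: when $\mathcal S = \mathbb C 1_{\mathcal S}$, every unital map $\varphi : \mathcal S \to \mathcal T / \mathcal J$ must send $\lambda 1_{\mathcal S}$ to $\lambda 1_{\mathcal T / \mathcal J}$, and the assignment $\tilde\varphi(\lambda 1_{\mathcal S}) := \lambda 1_{\mathcal T}$ is a unital $*$-homomorphic, hence completely positive, lift.

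For the ``only if'' direction I will argue by contradiction, assuming $\dim \mathcal S \geq 2$ and producing an unliftable unital completely positive map. The obstruction is the Archimedeanization built into the quotient cone
$$M_n(\mathcal T / \mathcal J)^+ = \{ [x_{ij} + \mathcal J] : \forall \varepsilon > 0\ \exists k_{ij} \in \mathcal J,\ [x_{ij} + k_{ij}] + \varepsilon I_n \otimes 1_{\mathcal T} \in M_n(\mathcal T)^+ \},$$
which, as recalled in the paragraph preceding the proposition, in general strictly contains the algebraic image of $M_n(\mathcal T)^+$ under the quotient map. The key observation is that any completely positive lift $\tilde\varphi : \mathcal S \to \mathcal T$ of $\varphi$ must carry each positive $s \in \mathcal S$ to a positive preimage $\tilde\varphi(s) \in \mathcal T^+$ of $\varphi(s)$. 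Consequently, to contradict the lifting hypothesis it suffices to exhibit a triple $(\mathcal T, \mathcal J, \varphi)$ and a positive element $s \in \mathcal S$ for which $\varphi(s)$ is strictly Archimedean: positive in $\mathcal T / \mathcal J$ yet with no positive representative in $\mathcal T$.

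Given $\dim \mathcal S \geq 2$, I first select a self-adjoint $s \in \mathcal S$ that is linearly independent of $1_{\mathcal S}$ and, after an affine normalization, arrange $0 \leq s \leq 1_{\mathcal S}$ with $s \notin \{0, 1_{\mathcal S}\}$. Following the Paulsen--Tomforde blueprint for non-Archimedean quotients \cite{PT}, I will then construct an operator system $\mathcal T$ and a kernel $\mathcal J$ so that a unital complete order isomorphic copy of the two-dimensional subsystem $\mathrm{span}\{1_{\mathcal S}, s\}$ embeds into $\mathcal T / \mathcal J$ and $s$ is carried to a strictly Archimedean positive element. The final step is to promote the resulting ucp map from $\mathrm{span}\{1_{\mathcal S}, s\}$ to a ucp $\varphi : \mathcal S \to \mathcal T / \mathcal J$ defined on all of $\mathcal S$; the lifting hypothesis applied to $\varphi$ then yields the desired contradiction via $\tilde\varphi(s)$.

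The principal obstacle is precisely this last promotion step, because Arveson's extension theorem only guarantees an extension into some ambient injective $B(H) \supset \mathcal T / \mathcal J$, not into $\mathcal T / \mathcal J$ itself. I plan to circumvent it in one of two ways: either engineer $\mathcal T$ directly on top of $\mathcal S$, for instance by setting $\mathcal T = \mathcal S \oplus_1 \mathcal R$ for an auxiliary $\mathcal R$ with $\mathcal J$ chosen so that the tautological composition $\mathcal S \hookrightarrow \mathcal T \twoheadrightarrow \mathcal T / \mathcal J$ itself plays the role of $\varphi$; or, alternatively, enlarge $\mathcal T$ so that its quotient contains $\mathcal T / \mathcal J$ injectively while preserving the strict Archimedean status of $s$. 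In either scenario the essential content is the non-proximinality phenomenon for quotient operator systems recorded in \cite{PT}, transported to $\mathcal S$ via the chosen two-dimensional subsystem $\mathrm{span}\{1_{\mathcal S}, s\}$, with the remaining verification being formal bookkeeping.
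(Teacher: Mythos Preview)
Your diagnosis of the obstruction is correct: the failure of proximinality in operator system quotients is exactly what drives the proof, and your reduction to finding a positive $s\in\mathcal S$ whose image has no positive preimage is the right reformulation. However, neither of your two proposed ways around the extension problem actually works as written, so the proof is incomplete.

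Approach (a) is fatally flawed: if $\mathcal T=\mathcal S\oplus_1\mathcal R$ and $\varphi$ is the composite $\mathcal S\hookrightarrow\mathcal T\twoheadrightarrow\mathcal T/\mathcal J$, then the canonical inclusion $\mathcal S\hookrightarrow\mathcal T$ is itself a unital completely positive lift of $\varphi$. So $\varphi$ always lifts, and no contradiction can arise this way. Approach (b) is too vague to assess; arranging an enlargement whose quotient is injective while keeping a specified element strictly Archimedean is not a routine manoeuvre, and you give no construction.

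The paper avoids the extension problem altogether by a device you do not mention: it chooses the quotient to be $\ell_\infty^2$, a target so elementary that one can write down a ucp map $\varphi:\mathcal S\to\ell_\infty^2$ on \emph{all} of $\mathcal S$ directly, namely $\varphi=(\omega_1,\omega_2)$ for two well-chosen states. Concretely, one builds a three-dimensional ordered space $V$ whose cone $V^+$ is generated by a half-disc, takes $\mathcal T={\rm OMAX}(V)$, and checks that the projection $(x,y,z)\mapsto(x,y)$ is a complete order quotient map onto $\ell_\infty^2$ for which the boundary point $(0,1)$ has no positive preimage. Given $\dim\mathcal S\ge 2$, pick a positive $v$ with $0$ in its spectrum and states $\omega_1,\omega_2$ with $\omega_1(v)=0$, $\omega_2(v)=\|v\|$; then $\varphi(v)=(0,\|v\|)$ hits the obstructed point. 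No Arveson extension, no coproduct gymnastics---the whole difficulty you flagged simply disappears because the target is two-dimensional and abelian.
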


\begin{proof}
Let $V^+$ be the cone in $\mathbb R^3$ generated by $$\{ (x,y,1) : (x-1)^2 + y^2 \le 1, y \ge 0 \}$$ and the origin. The triple $V:=(\mathbb C^3, V^+, (1,1,2))$ is an Archimedean ordered $*$-vector space. The positive cones of the operator system ${\rm OMAX} (V)$ introduced in \cite{PTT} are given as $$M_n({\rm OMAX} (V))^+ = \{ X \in M_n(V) : \forall \varepsilon > 0, X+\varepsilon I_n \otimes 1_V \in M_n^+ \otimes V^+ \},$$ where $$M_n^+ \otimes V^+ = \{ \sum_{i=1}^m \alpha_i \otimes v_i \in M_n \otimes V : m \in \mathbb N, \alpha_i \in M_n^+, v_i \in V^+ \}.$$

Let $$P : (x,y,z) \in {\rm OMAX} (V) \mapsto (x,y) \in \ell_\infty^2$$ be the projection. We take $\varepsilon>0$ and an element $$\alpha_1 \otimes (1,0) + \alpha_2 \otimes (0,1) \in M_n(\ell^2_\infty)^+ = M_n^+ \oplus M_n^+$$ for nonzero $\alpha_2$. Since $$\begin{aligned} & \alpha_1 \otimes (1,0) + \alpha_2 \otimes (0,1) + \varepsilon I_n \otimes (1,1) \\ = & (\alpha_1 + {\varepsilon \over 2}(I_n-{\alpha_2 \over \|\alpha_2\|})) \otimes (1,0) + \alpha_2 \otimes ({\varepsilon \over 2 \|\alpha_2\|},1)+\varepsilon I_n \otimes ({1 \over 2},1) \end{aligned}$$ lifts to a positive element in $M_n({\rm OMAX} (V))$, the projection $P : {\rm OMAX} (V) \to \ell^2_\infty$ is a complete order quotient map.

Suppose that ${\rm dim} \mathcal S  \ge 2$. Let $v$ be a positive element in $\mathcal S$ distinct from the scalar multiple of the identity. Considering $v-\lambda I$ for sufficiently large $\lambda>0$, we may assume that the spectrum of $v$ contains zero. Let $\omega_1$ and $\omega_2$ be states on $\mathcal S$ that extend, respectively, the Dirac measures $$\delta_{\{ 0\}} : \lambda 1 + \mu v \in {\rm span} \{ 1, v \} \mapsto \lambda \in \mathbb C$$ $$\delta_{\{ \|v\|\}} : \lambda 1 + \mu v \in {\rm span} \{ 1, v \} \mapsto \lambda+\mu \|v\| \in \mathbb C.$$ The unital completely positive map $\varphi : \mathcal S \to \ell^2_\infty$ defined by $\varphi = (\omega_1, \omega_2)$ cannot be lifted to a completely positive map, because the fiber of $\varphi(v)=(0,\|v\|)$ does not intersect $V^+$.
\end{proof}

The absence of completely positive liftings in the above proof is essentially due to Archimedeanization of quotients \cite{PT}. In Corollary \ref{rigidity}, we will see that there is also rigidity, even though some perturbation is allowed.

A linear map between normed spaces is called a quotient map if it maps the open unit ball onto the open unit ball. Between Banach spaces, it suffices to show that the image of the open unit ball is dense in the open unit ball \cite[Lemma A.2.1]{ER}. Suppose that $T : E \to F$ is a bounded linear surjection for normed spaces $E$ and $F$. Let $E_0$ be a dense subspace of $E$, and $Q_0 : E_0 \to Q(E_0)$ be the surjective restriction of $Q$ on $E_0$. Then, $Q_0$ is a quotient map if and only if $\overline{\ker Q_0} = \ker Q$ and $Q$ is a quotient map \cite[7.4]{DF}. This is called the quotient lemma.

Thanks to the quotient lemma, we can describe the 1-exactness of operator systems by incomplete tensor products. For an operator system $\mathcal S$ and a unital $C^*$-algebra $\mathcal A$ with its closed ideal $\mathcal I$, we denote the completion of $\mathcal S \otimes_{\min} \mathcal A$ by $\mathcal S \hat{\otimes}_{\min} \mathcal A$ and the closure of $\mathcal S \otimes \mathcal I$ in it by $\mathcal S \bar{\otimes} \mathcal I$. When $${\rm id}_{\mathcal S} \otimes \pi : \mathcal S \hat{\otimes}_{\min} \mathcal A \to \mathcal S \hat{\otimes}_{\min} \mathcal A \slash \mathcal I$$ is a complete order quotient map with its kernel $\mathcal S \bar{\otimes} \mathcal I$ for any $C^*$-algebra $\mathcal A$ and its closed ideal $\mathcal I$, $\mathcal S$ is called {\em 1-exact}.

\begin{prop}\label{incomplete}
Suppose that $\mathcal S$ is an operator system and $\mathcal A$ is a unital $C^*$-algebra with its closed ideal $\mathcal I$. Then the map $${\rm id}_{\mathcal S} \otimes \pi : \mathcal S \hat{\otimes}_{\min} \mathcal A \to \mathcal S \hat{\otimes}_{\min} \mathcal A \slash \mathcal I$$ is a complete order quotient map with its kernel $\mathcal S \bar{\otimes} \mathcal I$ if and only if the map $${\rm id}_{\mathcal S} \otimes \pi : \mathcal S \otimes_{\min} \mathcal A \to \mathcal S \otimes_{\min} \mathcal A \slash \mathcal I$$ is a complete order quotient map. Hence, an operator system $\mathcal S$ is 1-exact if and only if ${\rm id}_{\mathcal S} \otimes \pi : \mathcal S \otimes_{\min} \mathcal A \to \mathcal S \otimes_{\min} \mathcal A \slash \mathcal I$ is a complete order quotient map for any unital $C^*$-algebra $\mathcal A$ and its closed ideal $\mathcal I$.
\end{prop}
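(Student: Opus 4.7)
The plan is to invoke the Defant--Floret quotient lemma cited immediately above, applied at each matrix level, and then to upgrade the resulting Banach-space assertion to an order-theoretic one via a telescoping sum of positive lifts in the completion.

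The $(\Rightarrow)$ direction should be relatively routine. Given $Q \in M_n(\mathcal S \otimes_{\min} \mathcal A / \mathcal I)^+$ and $\varepsilon > 0$, I would regard $Q$ inside the completion and use the hypothesis to produce a positive lift $P' \in M_n(\mathcal S \hat\otimes_{\min} \mathcal A)^+$ of $Q + \tfrac{\varepsilon}{2} I_n \otimes 1$. Approximating $P'$ to within $\tfrac{\varepsilon}{4}$ by a self-adjoint $P'' \in M_n(\mathcal S \otimes_{\min} \mathcal A)$, correcting its image back to $Q + \tfrac{\varepsilon}{2} I_n \otimes 1$ by adding a small element of $M_n(\mathcal S \otimes \mathcal I)$, and finally adding $\tfrac{\varepsilon}{2} I_n \otimes 1$ to absorb the error, should yield a positive lift of $Q + \varepsilon I_n \otimes 1$ in the algebraic tensor product.

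For the substantive $(\Leftarrow)$ direction, assume that the uncompleted map is a complete order quotient. The first step is to note that it is then automatically a complete quotient of operator spaces: given $\|y\| < 1$ in $M_n(\mathcal S \otimes_{\min} \mathcal A / \mathcal I)$, the positive Paulsen matrix $\bigl(\begin{smallmatrix} I & y \\ y^* & I \end{smallmatrix}\bigr)$ lifts to a strictly positive element of $M_{2n}(\mathcal S \otimes_{\min} \mathcal A)$ whose off-diagonal block $x$ satisfies $(\mathrm{id}\otimes\pi)(x) = y$ and $\|x\| \le 1 + \varepsilon$. Applying the quotient lemma at each matrix level extends $\mathrm{id}_{\mathcal S} \otimes \pi$ to a complete quotient map of operator spaces $\mathcal S \hat\otimes_{\min} \mathcal A \to \mathcal S \hat\otimes_{\min} \mathcal A / \mathcal I$ with kernel exactly $\mathcal S \bar\otimes_{\min} \mathcal I$. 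To upgrade this to a complete order quotient, fix $Q \in M_n(\mathcal S \hat\otimes_{\min} \mathcal A / \mathcal I)^+$ and $\varepsilon > 0$, and expand $Q + \tfrac{\varepsilon}{2} I_n \otimes 1$ as a norm-summable series $\sum_j Y_j$ of strictly positive elements of $M_n(\mathcal S \otimes_{\min} \mathcal A / \mathcal I)$, obtained by successively extracting self-adjoint algebraic approximants of $Q$ and perturbing by shrinking multiples of the order unit to enforce strict positivity. Lift each $Y_j$ by the hypothesis to a positive $P_j \in M_n(\mathcal S \otimes_{\min} \mathcal A)^+$ with $\|P_j\| \le \|Y_j\| + 2^{-j}\varepsilon$, and let $P := \sum_j P_j$, which converges in $M_n(\mathcal S \hat\otimes_{\min} \mathcal A)^+$ to a positive lift of $Q + \varepsilon I_n \otimes 1$. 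The $1$-exactness assertion follows immediately.

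The delicate point will be the norm control $\|P_j\| \le \|Y_j\| + 2^{-j}\varepsilon$ for the positive lifts, which is not part of the abstract definition of a complete order quotient. The cleanest remedy I see is to invoke the equivalent characterization that the induced map $\widetilde{\mathrm{id}_{\mathcal S} \otimes \pi}$ from the quotient operator system onto $\mathcal S \otimes_{\min} \mathcal A / \mathcal I$ is a unital complete order isomorphism, and hence a complete isometry at every matrix level, so that arbitrarily small norm excess in the positive lifts is automatic.
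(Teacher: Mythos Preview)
Your $(\Rightarrow)$ direction is essentially fine, though the phrasing is slightly off: adding an element of $M_n(\mathcal S\otimes\mathcal I)$ cannot correct the image, since such elements lie in the kernel.  What you should do instead is pick \emph{any} self-adjoint algebraic lift $R\in M_n(\mathcal S\otimes\mathcal A)$ of $Q+\tfrac{\varepsilon}{2}I_n\otimes 1$, use the kernel hypothesis to see that $P'-R\in M_n(\mathcal S\,\bar\otimes_{\min}\mathcal I)$, approximate $P'-R$ by a self-adjoint $k\in M_n(\mathcal S\otimes\mathcal I)$ to within $\tfrac{\varepsilon}{2}$, and take $R+k+\tfrac{\varepsilon}{2}I_n\otimes 1$ as the positive lift of $Q+\varepsilon I_n\otimes 1$.

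The $(\Leftarrow)$ direction, however, has a genuine gap that your remedy does not close.  The Paulsen trick does \emph{not} show that a complete order quotient map is a complete quotient map of operator spaces.  When you lift the Paulsen matrix $\bigl(\begin{smallmatrix}1&y\\ y^*&1\end{smallmatrix}\bigr)$ to a positive element $\bigl(\begin{smallmatrix}a&x\\ x^*&b\end{smallmatrix}\bigr)$, the diagonal blocks $a,b$ map to $1$ but need not be scalar; they are $1$ plus an arbitrary self-adjoint kernel element, so positivity of the $2\times 2$ block matrix gives no bound on $\|x\|$.  This is exactly the distinction between the operator space quotient and the operator system quotient: the former forces the off-diagonal kernel correction only, while the latter allows correction in all four blocks.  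In general these two quotient structures differ (see \cite[Example~4.4]{KPTT2}), and the complete order quotient hypothesis only controls the latter.  The same obstruction kills your proposed remedy for the norm-controlled positive lifts: the complete isometry you invoke is with respect to the operator system quotient norm, which is in general strictly smaller than the operator space quotient norm, so it does not provide lifts of small norm, let alone positive ones.

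The paper's proof supplies precisely the missing ingredient: by \cite[Theorem~5.1]{KPTT2}, for the specific quotient of $\mathcal S\,\hat\otimes_{\min}\mathcal A$ by $\mathcal S\,\bar\otimes_{\min}\mathcal I$ the operator space and operator system quotient structures coincide, and the paper establishes the analogous statement for the algebraic tensor products.  Once this is in hand, ``complete order quotient'' and ``complete (operator space) quotient'' become equivalent on both the completed and uncompleted sides, and the Defant--Floret quotient lemma handles the passage between them in one stroke---no telescoping series or norm-controlled positive lifts are needed.
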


\begin{proof}
The operator space quotient and the operator system quotient of $\mathcal S \hat{\otimes}_{\min} \mathcal A$ by $\mathcal S \bar{\otimes} \mathcal I$ are completely isometric \cite[Theorem 5.1]{KPTT2}. Since $\mathcal S \otimes \mathcal I$ is the kernel of ${\rm id}_{\mathcal S} \otimes \pi : \mathcal S \otimes_{\min} \mathcal A \to \mathcal S \otimes_{\min} \mathcal A \slash \mathcal I$, we can also consider both the operator space quotient and the operator system quotient of $\mathcal S \otimes_{\min} \mathcal A$ by $\mathcal S \otimes \mathcal I$. It is easy to check that the operator space quotient (respectively operator system quotient) $(\mathcal S \otimes_{\min} \mathcal A) \slash (\mathcal S \otimes \mathcal I)$ is an operator subspace (respectively operator subsystem) of the operator space quotient (respectively operator system quotient) $(\mathcal S \hat{\otimes}_{\min} \mathcal A) \slash (\mathcal S \bar{\otimes} \mathcal I)$. If $z \in \mathcal S \otimes \mathcal A$ and $z+\mathcal S \bar{\otimes} \mathcal I$ is positive in the operator system quotient $(\mathcal S \hat{\otimes}_{\min} \mathcal A) \slash (\mathcal S \bar{\otimes} \mathcal I)$, then there exists $x \in \mathcal S \bar{\otimes} \mathcal I$ such that $$z+{\varepsilon \over 2} 1_{\mathcal S} \otimes 1_{\mathcal A} + x \in (\mathcal S \hat{\otimes}_{\min} \mathcal A)^+.$$ Take $x_0 \in \mathcal S \otimes I$ with $\|x-x_0\| < \varepsilon \slash 2$. Considering $(x_0 + x_0^*) \slash 2$, we may assume that $x_0$ is self-adjoint. We have $$z+ \varepsilon 1_{\mathcal S} \otimes 1_{\mathcal A} + x_0 \in (\mathcal S \otimes_{\min} \mathcal A)^+,$$ which implies that $z+\mathcal S \otimes \mathcal I$ is positive in $(\mathcal S \otimes_{\min} \mathcal A) \slash (\mathcal S \otimes \mathcal I)$. Hence, \cite[Theorem 5.1]{KPTT2} immediately implies that the operator space quotient and the operator system quotient of $\mathcal S \otimes_{\min} \mathcal A$ by $\mathcal S \otimes \mathcal I$ are completely isometric.

A unital linear map between operator systems is completely order isomorphic if and only if it is completely isometric \cite[Corollary 5.1.2]{ER}. If a linear map between Banach spaces maps the open unit ball into the open unit ball densely, then it is a quotient map \cite[Lemma A.2.1]{ER}. Combining them with the quotient lemma, we have equivalences:
\begin{enumerate}
\item[] the map ${\rm id}_{\mathcal S} \otimes \pi : \mathcal S \hat{\otimes}_{\min} \mathcal A \to \mathcal S \hat{\otimes}_{\min} \mathcal A \slash \mathcal I$ is a complete order quotient map with its kernel $\mathcal S \bar{\otimes} \mathcal I$
\item[$\Leftrightarrow$] the operator system quotient $(\mathcal S \hat{\otimes}_{\min} \mathcal A) \slash (\mathcal S \bar{\otimes} \mathcal I)$ is completely order isomorphic to $\mathcal S \hat{\otimes}_{\min} \mathcal A \slash \mathcal I$
\item[$\Leftrightarrow$] the operator space quotient $(\mathcal S \hat{\otimes}_{\min} \mathcal A) \slash (\mathcal S \bar{\otimes} \mathcal I)$ is completely isometric to $\mathcal S \hat{\otimes}_{\min} \mathcal A \slash \mathcal I$ (\cite[Theorem 5.1]{KPTT2}, \cite[Corollary 5.1.2]{ER})
\item[$\Leftrightarrow$] the map ${\rm id}_{\mathcal S} \otimes \pi : \mathcal S \hat{\otimes}_{\min} \mathcal A \to \mathcal S \hat{\otimes}_{\min} \mathcal A \slash \mathcal I$ is a complete quotient map with its kernel $\mathcal S \bar{\otimes} \mathcal I$
\item[$\Leftrightarrow$] the map ${\rm id}_{\mathcal S} \otimes \pi : \mathcal S \otimes_{\min} \mathcal A \to \mathcal S \otimes_{\min} \mathcal A \slash \mathcal I$ is a complete quotient map (Quotient Lemma, \cite[Lemma A.2.1]{ER})
\item[$\Leftrightarrow$] the operator space quotient $(\mathcal S \otimes_{\min} \mathcal A) \slash (\mathcal S \otimes \mathcal I)$ is completely isometric to $\mathcal S \otimes_{\min} \mathcal A \slash \mathcal I$
\item[$\Leftrightarrow$] the operator system quotient $(\mathcal S \otimes_{\min} \mathcal A) \slash (\mathcal S \otimes \mathcal I)$ is completely order isomorphic to $\mathcal S \otimes_{\min} \mathcal A \slash \mathcal I$ (\cite[Theorem 5.1]{KPTT2}, \cite[Corollary 5.1.2]{ER})
\item[$\Leftrightarrow$] the map ${\rm id}_{\mathcal S} \otimes \pi : \mathcal S \otimes_{\min} \mathcal A \to \mathcal S \otimes_{\min} \mathcal A \slash \mathcal I$ is a complete order quotient map.
\end{enumerate}
\end{proof}

As pointed out in \cite[Section 5]{KPTT2}, the framework of short exact sequences
$$0 \to \mathcal S \bar{\otimes} \mathcal I \to \mathcal S \hat{\otimes}_{\min} \mathcal A \to \mathcal S \hat{\otimes}_{\min} \mathcal A \slash \mathcal I \to 0$$ with complete tensor products
is inappropriate if  we replace ideals in $C^*$-algebras and $C^*$-quotients by kernels in operator systems and operator system quotients. Even a one-dimensional operator system does not satisfy such exactness. Instead of short exact sequences with complete tensor products, we make a replacement in $${\rm id}_{\mathcal S} \otimes \pi : \mathcal S \otimes_{\min} \mathcal A \to \mathcal S \otimes_{\min} \mathcal A \slash \mathcal I$$ with incomplete tensor products.

\begin{thm}\label{nuclear}
Let $\mathcal S$ be an operator system. Then, the following are equivalent:
\begin{enumerate}
\item[(i)] $\mathcal S$ is nuclear;
\item[(ii)] if $\Phi : \mathcal T_1 \to \mathcal T_2$ is a complete order quotient map for operator systems $\mathcal T_1$ and $\mathcal T_2$, then $${\rm id}_{\mathcal S} \otimes \Phi : \mathcal S \otimes_{\min} \mathcal T_1 \to \mathcal S \otimes_{\min} \mathcal T_2$$ is a complete order quotient map;
\item[(iii)] if $\Phi : \mathfrak C_I \to \mathcal T$ is a complete order quotient map for an operator system $\mathcal T$, then $${\rm id}_{\mathcal S} \otimes \Phi : \mathcal S \otimes_{\min} \mathfrak C_I \to \mathcal S \otimes_{\min} \mathcal T$$ is a complete order quotient map;
\item[(iv)] if $\Phi : {\mathcal T} \to E$ is a complete order quotient map for an operator system $\mathcal T$ and a finite dimensional operator system $E$, then $${\rm id}_{\mathcal S} \otimes \Phi : \mathcal S \otimes_{\min} {\mathcal T} \to \mathcal S \otimes_{\min} E$$ is a complete order quotient map.
\end{enumerate}
\end{thm}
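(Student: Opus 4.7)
We establish the cycle $(\mathrm{i})\Rightarrow(\mathrm{ii})\Rightarrow(\mathrm{iii})\Rightarrow(\mathrm{iv})\Rightarrow(\mathrm{i})$. Implications $(\mathrm{i})\Rightarrow(\mathrm{ii})$ and $(\mathrm{ii})\Rightarrow(\mathrm{iii})$ are short: nuclearity identifies the minimal and maximal positive cones on $\mathcal S\otimes\mathcal T_i$, and by \cite[Theorem 3.4]{H} the maximal tensor product preserves complete order quotient maps; specialization to $\mathcal T_1=\mathfrak C_I$ gives (iii).

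For $(\mathrm{iii})\Rightarrow(\mathrm{iv})$, given a complete order quotient map $\Phi:\mathcal T\twoheadrightarrow E$, Theorem \ref{universal} (with $I$ indexing $\mathcal T^+_{\|\cdot\|\le 1}$) supplies a complete order quotient map $Q:\mathfrak C_I\twoheadrightarrow\mathcal T$. Successive lifting of strictly positive elements shows that $\Phi\circ Q:\mathfrak C_I\twoheadrightarrow E$ is again a complete order quotient map. Applying (iii) to $\Phi\circ Q$: given $z\in M_n(\mathcal S\otimes_{\min}E)^+$ and $\varepsilon>0$ there exists $x\in M_n(\mathcal S\otimes_{\min}\mathfrak C_I)$ with $x+\varepsilon I_n\otimes 1\ge 0$ and $(\mathrm{id}_\mathcal S\otimes(\Phi\circ Q))(x)=z$. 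Setting $w=(\mathrm{id}_\mathcal S\otimes Q)(x)\in M_n(\mathcal S\otimes_{\min}\mathcal T)$, the unital completely positive map $\mathrm{id}_\mathcal S\otimes Q$ sends $x+\varepsilon I_n\otimes 1$ to $w+\varepsilon I_n\otimes 1\ge 0$, while $(\mathrm{id}_\mathcal S\otimes\Phi)(w)=z$, so $\mathrm{id}_\mathcal S\otimes\Phi$ is a complete order quotient map.

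The substantive step is $(\mathrm{iv})\Rightarrow(\mathrm{i})$. The key observation is that each finite sub-coproduct $\mathfrak C_F=\oplus_1\{(M_k\oplus M_k)_{\iota_k}:\iota_k\in F\}$ is a complete order quotient of the finite-dimensional nuclear $C^*$-algebra $\mathcal A_F=\bigoplus_{\iota_k\in F}(M_k\oplus M_k)_{\iota_k}$ by Proposition \ref{coproduct-quotient}. Applying (iv) to the quotient map $\pi_F:\mathcal A_F\twoheadrightarrow\mathfrak C_F$ and \cite[Theorem 3.4]{H} to its maximal counterpart, $\mathrm{id}_\mathcal S\otimes\pi_F$ is a complete order quotient in both $\otimes_{\min}$ and $\otimes_{\max}$ with common algebraic kernel $\mathcal S\otimes\ker\pi_F$; since $\mathcal A_F$ is nuclear, $\mathcal S\otimes_{\min}\mathcal A_F=\mathcal S\otimes_{\max}\mathcal A_F$ and quotienting yields $\mathcal S\otimes_{\min}\mathfrak C_F=\mathcal S\otimes_{\max}\mathfrak C_F$. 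A locality argument then upgrades this to $\mathfrak C_0$: any positive element of $M_n(\mathcal S\otimes_{\min}\mathfrak C_0)$ already lies in $M_n(\mathcal S\otimes_{\min}\mathfrak C_F)^+$ for some finite $F$, via the complete order embedding $\mathfrak C_F\hookrightarrow\mathfrak C_0$ from Proposition \ref{coproduct}(ii), and functoriality of $\otimes_{\max}$ carries its positivity back to $\mathcal S\otimes_{\max}\mathfrak C_0$, giving $\mathcal S\otimes_{\min}\mathfrak C_0=\mathcal S\otimes_{\max}\mathfrak C_0$. By Theorem \ref{N}, every finite-dimensional operator system $E$ is a complete order quotient of $\mathfrak C_0$; rerunning the quotient argument produces $\mathcal S\otimes_{\min}E=\mathcal S\otimes_{\max}E$, and the standard reduction (a positive element of $\mathcal S\otimes_{\min}\mathcal T$ lies in $\mathcal S\otimes_{\min}E_0$ for a finite-dimensional $E_0\subset\mathcal T$, and max is functorial in $\mathcal T$) yields nuclearity of $\mathcal S$. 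The main technical point I would verify carefully is that the identification $(\mathcal S\otimes_{\tau}\mathcal A_F)/(\mathcal S\otimes\ker\pi_F)=\mathcal S\otimes_{\tau}\mathfrak C_F$ is valid as an operator system for both $\tau=\min$ and $\tau=\max$, which follows from the first isomorphism theorem applied to the complete order quotient map $\mathrm{id}_\mathcal S\otimes\pi_F$.
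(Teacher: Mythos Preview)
Your cycle $(\mathrm{i})\Rightarrow(\mathrm{ii})\Rightarrow(\mathrm{iii})\Rightarrow(\mathrm{iv})\Rightarrow(\mathrm{i})$ is correct. The easy implications and your $(\mathrm{iii})\Rightarrow(\mathrm{iv})$ match the paper's arguments essentially verbatim (the paper actually organises them as $(\mathrm{ii})\Rightarrow(\mathrm{iii}),(\mathrm{iv})$ trivially and $(\mathrm{iii})\Rightarrow(\mathrm{ii})$, $(\mathrm{iv})\Rightarrow(\mathrm{ii})$ by the same factoring-through-$\mathfrak C_I$ and finite-dimensional-restriction tricks you use).

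The real divergence is in the return to $(\mathrm{i})$. The paper proves $(\mathrm{ii})\Rightarrow(\mathrm{i})$ by an approximation argument: it first extracts $1$-exactness of $\mathcal S$ from the hypothesis, invokes Pisier's truncation characterisation of $1$-exact operator spaces to get nearly completely isometric embeddings of finite-dimensional subsystems into $M_n$, then uses the hypothesis dually (lifting along the restriction $M_n^*\twoheadrightarrow\varphi(E)^*$) to promote an approximate inverse $\varphi(E)\to\mathcal S$ to a completely positive map $M_n\to\mathcal S$; this yields CPAP factorisations and nuclearity via \cite[Corollary 3.2]{HP}. Your $(\mathrm{iv})\Rightarrow(\mathrm{i})$ is a genuinely different and more self-contained route: you exploit the internal structure of $\mathfrak C_0$, namely that every finite sub-coproduct $\mathfrak C_F$ is a quotient of the nuclear finite-dimensional $C^*$-algebra $\mathcal A_F$, so that applying $(\mathrm{iv})$ and \cite[Theorem 3.4]{H} to $\pi_F$ with identical kernels forces $\mathcal S\otimes_{\min}\mathfrak C_F=\mathcal S\otimes_{\max}\mathfrak C_F$; then a locality/functoriality argument passes to $\mathfrak C_0$, to arbitrary finite-dimensional $E$ via Theorem~\ref{N}, and finally to all $\mathcal T$. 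This avoids Pisier's exactness theory and the CPAP machinery entirely, at the cost of relying on the paper's structural results about $\mathfrak C_0$ (Proposition~\ref{coproduct-quotient}, Theorem~\ref{N}). Your technical checkpoint---that the two quotient structures on $\mathcal S\otimes\mathfrak C_F$ coincide because $\mathrm{id}_{\mathcal S}\otimes\pi_F$ is the \emph{same} linear map on the \emph{same} operator system $\mathcal S\otimes_{\min}\mathcal A_F=\mathcal S\otimes_{\max}\mathcal A_F$ and is a complete order quotient onto both targets---is exactly the point that makes the argument work, and it is sound.
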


\begin{proof}
(i) $\Rightarrow$ (ii). Maximal tensor products of complete order quotient maps are still complete order quotient maps \cite[Theorem 3.4]{H}. Combining this with the hypothesis, we have a complete order quotient map
$${\rm id}_{\mathcal S} \otimes \Phi : \mathcal S \otimes_{\min} \mathcal T_1 = \mathcal S \otimes_{\max} \mathcal T_1 \twoheadrightarrow \mathcal S \otimes_{\max} \mathcal T_2 = \mathcal S \otimes_{\min} \mathcal T_2$$

(ii) $\Rightarrow$ (i). The proof is motivated by \cite[Theorem 14.6.1]{ER}. Taking $\mathcal T_1$ as a unital $C^*$-algebra  and $\mathcal T_2$ as its $C^*$-quotient, we see that $\mathcal S$ is a 1-exact operator system by Proposition \ref{incomplete}. We take a finite dimensional operator subsystem $E$ of $\mathcal S$ and $\varepsilon>0$. Then, $E$ is a 1-exact operator system \cite[Corollary 5.8]{KPTT2}, or equivalently, a 1-exact operator space \cite[Proposition 5.5]{KPTT2}. Let $E \subset B(\ell_2)$ and  $P_n : \ell_2 \to \ell_2^n$ be the projection given by $P_n((\lambda_i)_{i=1}^\infty) = (\lambda_1, \cdots, \lambda_n)$. For sufficiently large $n$, the truncation mapping
$$\varphi : x \in E \to  P_n x P_n \in M_n$$  is injective with $\|\varphi^{-1}\|_{cb} < 1+\varepsilon'$  for $\varepsilon'= \varepsilon \slash (1+2 \dim E)$ by \cite{P1}, \cite[Theorem 14.4.1]{ER}. Note that $\varphi$ is unital completely positive and $\varphi^{-1}$ is unital self-adjoint. By \cite[Corollary B.11]{BO}, there exists a unital completely positive map $\psi : \varphi(E) \to \mathcal S$ with $\|\varphi^{-1}-\psi\|_{cb} \le 2 \varepsilon' \dim E$.  Though \cite[Corollary B.11]{BO} assumes that the range space is a $C^*$-algebra, its proof still works more generally when the range space is an operator system.

By choosing a faithful state $\omega$ on $M_n$, we can regard the dual space $M_n^*$ as an operator system. Since $\omega$ is faithful on any operator subsystem, $(\varphi (E)^*, \omega|_{\varphi (E)})$ is also an operator system. The element $z$ in $\varphi (E)^* \otimes_{\min} \mathcal S$ corresponding to $\psi : \varphi (E) \to \mathcal S$ canonically is positive \cite[Lemma 8.5]{KPTT2}. Since the duals of the complete order embeddings between finite dimensional operator systems are complete order quotient maps \cite[Proposition 1.15]{FP}, the restriction $R : M_n^* \to \varphi (E)^*$ is a complete order quotient map. By the hypothesis, $$R \otimes {\rm id}_{\mathcal S} : M_n^* \otimes_{\min} \mathcal S \to \varphi (E)^* \otimes_{\min} \mathcal S$$ is also a complete order quotient map. There exists a positive lifting $\tilde{z} \in M_n^* \otimes_{\min} \mathcal S$ of $z+\varepsilon' \omega|_{\varphi (E)} \otimes 1_{\mathcal S}$. The completely positive map $\tilde{\psi} : M_n \to \mathcal S$ corresponding to $\tilde{z}$ satisfies $$\|\psi - \tilde{\psi}|_{\varphi (E)}\|_{cb} \le \varepsilon'.$$ By the Arveson extension theorem, $\varphi : E \to M_n$ extends to a unital completely positive map $\tilde{\varphi} : \mathcal S \to M_n$. We thus obtain a diagram
$$\xymatrix{\mathcal S \ar[ddrr]_{\tilde{\varphi}} && E \ar[ll]_{\iota} \ar[rr]^{\iota} \ar[d]_{\varphi} && \mathcal S \\ && \varphi(E) \ar[urr]^{\psi} \ar[d]^{\iota} && \\ && M_n \ar[uurr]_{\tilde{\psi}} &&}$$ where $\iota$ denote inclusions.

It follows that $$\begin{aligned} \|\tilde{\psi} \circ \tilde{\varphi} (x) - x \| & \le \|\tilde{\psi} \circ \varphi (x) - \psi \circ \varphi (x) \|+\|\psi \circ \varphi (x) - \varphi^{-1} \circ \varphi (x) \| \\ & \le \varepsilon' \|x\| + 2 \varepsilon' \dim E \|x\| \\ & = \varepsilon \|x\|. \end{aligned}$$ for all $x \in E$. Considering the directed set
$$ \{(E, \varepsilon) : \text{$E$ is a finite dimensional
operator subsystem of $\mathcal S$}, \varepsilon >0 \}$$ with the
standard partial order, we can take nets of unital completely positive maps $\varphi_\lambda : \mathcal S \to M_{n_\lambda}$ and completely positive maps $\psi'_\lambda : M_{n_\lambda} \to \mathcal S$ such that $\psi'_\lambda \circ \varphi_\lambda$ converges to the map ${\rm id}_{\mathcal S}$ in the point-norm topology.

Since each $\varphi_\lambda$ is unital, $\psi'_\lambda (I_{n_\lambda})$ converges to $1_{\mathcal S}$. Let us choose a state $\omega_\lambda$ on $M_{n_\lambda}$ and set $$\psi_\lambda(A) = {1 \over \| \psi'_\lambda\|} \psi'_\lambda(A) + \omega_\lambda(A) (1_{\mathcal S} - {1 \over \| \psi'_\lambda \|} \psi'_\lambda (I_{n_{\lambda}})).$$ Then $\psi_\lambda : M_{n_\lambda} \to \mathcal S$ is a unital completely positive map such that $\psi_\lambda \circ \varphi_\lambda$ converges to the map ${\rm id}_{\mathcal S}$ in the point-norm topology. By \cite[Corollary 3.2]{HP}, $\mathcal S$ is nuclear.

(ii) $\Rightarrow$ (iii), (ii) $\Rightarrow$ (iv). Trivial.

(iii) $\Rightarrow$ (ii). Choose a positive element $z$ in $\mathcal S \otimes_{\min} \mathcal T_2$ and $\varepsilon>0$. By Theorem \ref{universal}, we can take a complete order quotient map $\Psi : \mathfrak C_I \to \mathcal T_1$. By the assumption, there exists a positive element $\tilde{z}$ in $\mathcal S \otimes_{\min} \mathfrak C_I$ satisfying $({\rm id}_{\mathcal S} \otimes \Phi \circ \Psi) (\tilde{z})=z+\varepsilon 1$. Thus, ${\rm id}_{\mathcal S} \otimes \Psi (\tilde{z})$ is a positive lifting of $z+\varepsilon 1$.

(iv) $\Rightarrow$ (ii). Choose a positive element $z=\sum_{i=1}^n x_i \otimes y_i$ in $\mathcal S \otimes_{\min} \mathcal T_2$. Take $E$ as a finite dimensional operator subsystem of $\mathcal T_2$ generated by $\{ y_i : 1 \le i \le n \}$ and $\mathcal T$ as $\Phi^{-1}(E)$.
\end{proof}

\begin{rem}
The equivalence of (i) and (ii) was already discovered by Kavruk independently. The proof depends on Kavruk's result that is not yet published.
\end{rem}

\begin{cor}\label{rigidity}
Suppose that $E$ is a finite dimensional operator system and $\omega$ is a faithful state on $E$. The following are equivalent:
\begin{enumerate}
\item[(i)] if $\varepsilon>0$ and $\varphi : E \to \mathcal S \slash \mathcal J$ is a completely positive map for an operator system $\mathcal S$ and its kernel $\mathcal J$, then there exists a self-adjoint lifting $\tilde{\varphi} : E \to \mathcal S$ of $\varphi$ such that $\tilde{\varphi}+\varepsilon \omega 1_{\mathcal S}$ is completely positive;
\item[(ii)] $E$ is unitally completely order isomorphic to the direct sum of matrix algebras.
\end{enumerate}
\end{cor}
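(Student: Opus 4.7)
For $(ii) \Rightarrow (i)$, I would write $E = \bigoplus_{i=1}^m M_{n_i}$. Given a completely positive map $\varphi : E \to \mathcal{S} \slash \mathcal{J}$, restrict to each matrix block to get $\varphi_i : M_{n_i} \to \mathcal{S} \slash \mathcal{J}$; its Choi matrix $C_i = [\varphi_i(e_{k\ell})] \in M_{n_i}(\mathcal{S} \slash \mathcal{J})^+$ admits, by the very definition of the operator system quotient, a self-adjoint lift $Y_i \in M_{n_i}(\mathcal{S})$ with $Y_i + \delta I_{n_i} \otimes 1_{\mathcal{S}} \geq 0$ for any prescribed $\delta > 0$. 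Reassembling via Choi produces a self-adjoint lift $\tilde{\varphi} : E \to \mathcal{S}$ of $\varphi$. The Choi matrix $A_i = [\omega|_{M_{n_i}}(e_{k\ell})]$ of the restriction $\omega|_{M_{n_i}}$ is strictly positive definite by faithfulness of $\omega$, say $A_i \geq c_i I_{n_i}$ with $c_i > 0$; setting $\delta := \varepsilon \min_i c_i$ then ensures that the Choi matrix of $(\tilde{\varphi} + \varepsilon \omega \cdot 1_{\mathcal{S}})|_{M_{n_i}}$ is positive for every $i$, so $\tilde{\varphi} + \varepsilon \omega \cdot 1_{\mathcal{S}}$ is completely positive.

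For $(i) \Rightarrow (ii)$, the plan is to bootstrap (i) into injectivity of $E$, from which the conclusion follows by the Choi-Effros structure theorem for finite dimensional injective operator systems. I would first use (i) to verify criterion (iv) of Theorem \ref{nuclear}, establishing nuclearity of $E$: given a complete order quotient map $\Phi : \mathcal{T} \twoheadrightarrow F$ with $F$ finite dimensional and a positive $z \in M_n(E \otimes_{\min} F)$, the finite dimensional identification $E \otimes F \cong \mathcal{L}(F^*, E)$ converts $z$ into a completely positive map whose dualization serves as input to (i); the perturbed lift pulls back to a positive lift of $z + \varepsilon 1$ in $M_n(E \otimes_{\min} \mathcal{T})$. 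Next, applying (i) to the universal quotient $Q : \mathfrak{C}_I \twoheadrightarrow E$ of Theorem \ref{universal} with $\varphi = \mathrm{id}_E$ produces self-adjoint sections $s_\varepsilon : E \to \mathfrak{C}_I$ with $Q \circ s_\varepsilon = \mathrm{id}_E$ and $s_\varepsilon + \varepsilon \omega \cdot 1_{\mathfrak{C}_I}$ completely positive. Embedding $E$ inside an injective $B(H)$, composing each $s_\varepsilon$ with an Arveson extension $\mathfrak{C}_I \to B(H)$ of the inclusion $E \hookrightarrow B(H)$, and extracting a cluster point in the finite dimensional mapping space $\mathcal{L}(E, B(H))$ as $\varepsilon \to 0$ should yield a completely positive projection of $B(H)$ onto $E$, establishing injectivity.

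The main obstacle is the nuclearity step. Hypothesis (i) is one-sided, treating $E$ only as the domain, whereas the tensor criterion of Theorem \ref{nuclear} treats $E$ as a tensor factor; bridging these requires care about which tensor product of $E$ with its dual genuinely captures completely positive maps, and matching the perturbations $\varepsilon \omega$ appearing in (i) against the $\varepsilon \cdot 1$ appearing in the definition of complete order quotient map. A secondary difficulty is norm control on the family $\{s_\varepsilon\}$ so that a subsequential limit in the compactness step exists; since positive lifts through an operator system quotient are not in general uniformly bounded, this will likely require an additional normalization, for instance by arranging each $s_\varepsilon$ to be unital up to a small scalar before passing to the limit.
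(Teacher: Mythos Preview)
Your $(ii)\Rightarrow(i)$ is essentially the paper's argument: lift the Choi matrix on each block and use that the density matrix of the faithful state has strictly positive smallest eigenvalue.

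For $(i)\Rightarrow(ii)$, the obstacle you flag is real for the route you chose, but it dissolves once you dualize at the outset rather than trying to push $E$ itself through Theorem~\ref{nuclear}. The faithful state $\omega$ is precisely an order unit for the operator system $E^*$. Under the correspondence between completely positive maps $E\to\mathcal S/\mathcal J$ and positive elements of $E^*\otimes_{\min}\mathcal S/\mathcal J$ (as in \cite[Lemma~8.5]{KPTT2}), a self-adjoint lift $\tilde\varphi:E\to\mathcal S$ with $\tilde\varphi+\varepsilon\,\omega\cdot 1_{\mathcal S}$ completely positive corresponds exactly to a self-adjoint lift $\tilde z\in E^*\otimes_{\min}\mathcal S$ with $\tilde z+\varepsilon\,1_{E^*}\otimes 1_{\mathcal S}\ge 0$. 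Thus condition (i) says precisely that $\mathrm{id}_{E^*}\otimes\pi:E^*\otimes_{\min}\mathcal S\to E^*\otimes_{\min}\mathcal S/\mathcal J$ is a complete order quotient map for every operator system quotient, which is criterion (ii) of Theorem~\ref{nuclear} applied to $E^*$. Hence $E^*$ is nuclear, and by \cite[Corollary~3.7]{HP} every finite dimensional nuclear operator system is a direct sum of matrix algebras; dualizing once more gives a complete order isomorphism of $E$ with such a direct sum, and a short diagonalization argument makes it unital. This is the paper's proof.

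Your attempt to verify Theorem~\ref{nuclear}(iv) for $E$ itself cannot work as written: a positive $z\in M_n(E\otimes_{\min}F)$ corresponds to a completely positive map out of $E^*$ (equivalently, into $E$), and no dualization turns this into a map with domain $E$ to which hypothesis (i) applies. The second stage of your plan --- perturbed sections $s_\varepsilon$ through $\mathfrak C_I$, Arveson extensions, and a cluster-point argument to manufacture a completely positive projection onto $E$ --- is therefore unnecessary, and the norm-control difficulty you raise never has to be faced.
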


\begin{proof}
(i) $\Rightarrow$ (ii). Condition (i) can be rephrased to state that $${\rm id}_{E^*} \otimes \pi : E^* \otimes_{\min} \mathcal S \to E^* \otimes_{\min} \mathcal S \slash \mathcal J$$ is a complete order quotient map for any operator system $\mathcal S$ and its kernel $\mathcal J$. Hence, $E^*$ is a finite dimensional nuclear operator system. Every finite dimensional nuclear operator system is unitally completely order isomorphic to the direct sum of matrix algebras \cite[Corollary 3.7]{HP}. Suppose that $E^*$ is completely order isomorphic to $\oplus_{i=1}^n M_{k_i}$ for some $n, k_i \in \mathbb N$. Taking their duals, we see that $E$ is completely order isomorphic to  $\oplus_{i=1}^n M_{k_i}$. Suppose that the isomorphism maps the order unit of $E$ to a matrix $A$ in $\oplus_{i=1}^n M_{k_i}$. Then, $A$ is positive definite. Let $$A=U^* {\rm diag} (\lambda_1, \cdots, \lambda_m) U, \qquad \lambda_i>0,~ m=\sum_{i=1}^n k_i$$ be a diagonalization of $A$. The mapping $$\alpha \in \oplus_{i=1}^n M_{k_i} \mapsto U^* {\rm diag} (\sqrt{\lambda_1}, \cdots, \sqrt{\lambda_m})\ \alpha\ {\rm diag} (\sqrt{\lambda_1}, \cdots, \sqrt{\lambda_m}) U \in \oplus_{i=1}^n M_{k_i}$$ is a complete order isomorphism that maps the identity matrix to $A$.

(ii) $\Rightarrow$ (i) We may assume that $E=\oplus_{i=1}^n M_{k_i}$. Let $A$ be a density matrix of $\omega$ and $\lambda>0$ be its smallest eigenvalue. Suppose that $z \in \oplus_{i=1}^n M_{k_i}(\mathcal S \slash \mathcal J)$ is the direct sum of Choi matrices corresponding to the restrictions of $\varphi$ on each blocks $M_{k_i}$. There exists a lifting $\tilde{z} \in \oplus_{i=1}^n M_{k_i}(\mathcal S)$ of $z$ such that $\tilde{z}+\varepsilon \lambda I_m \otimes 1_{\mathcal S}$ ($m=\sum_{i=1}^n k_i$) is positive. Let $\tilde{\varphi} : E \to \mathcal S$ be a self-adjoint map corresponding to $\tilde{z}$. Then we have $$\tilde{\varphi}+\varepsilon \omega 1_{\mathcal S} = \tilde{\varphi}+\varepsilon {\rm tr}(\ \cdot\ A) 1_{\mathcal S} \ge_{cp} \tilde{\varphi}+\varepsilon \lambda {\rm tr}(\ \cdot\ ) 1_{\mathcal S} \ge_{cp} 0.$$
\end{proof}

In the last statement of Proposition \ref{incomplete}, an operator systems $\mathcal S$ is fixed, and a $C^*$-algebra $\mathcal A$ and its closed ideal $\mathcal I$ are considered to be variables in $${\rm id}_{\mathcal S} \otimes \pi : \mathcal S \otimes_{\min} \mathcal A \to \mathcal S \otimes_{\min} \mathcal A \slash \mathcal I.$$ In the following, we switch their roles. As a result, we give an operator system theoretic proof of the Effros-Haagerup lifting theorem \cite[Theorem 3.2]{EH}.

\begin{thm}
Suppose that $\mathcal A$ is a unital $C^*$-algebra and $\mathcal I$ is its closed ideal. The following are equivalent:
\begin{enumerate}
\item[(i)] ${\rm id}_{\mathcal S} \otimes \pi : \mathcal S \otimes_{\min} \mathcal A \to \mathcal S \otimes_{\min} \mathcal A \slash \mathcal I$ is a complete order quotient map for any operator system $\mathcal S$;
\item[(ii)] ${\rm id}_{\mathcal B} \otimes \pi : \mathcal B \otimes_{\min} \mathcal A \to \mathcal B \otimes_{\min} \mathcal A \slash \mathcal I$ is a complete order quotient map for any unital $C^*$-algebra $\mathcal B$;
\item[(iii)] ${\rm id}_{B(H)} \otimes \pi : B(H) \otimes_{\min} \mathcal A \to B(H) \otimes_{\min} \mathcal A \slash \mathcal I$ is a complete order quotient map for a separable Hilbert space $H$;
\item[(iv)] ${\rm id}_E \otimes \pi : E \otimes_{\min} \mathcal A \to E \otimes_{\min} \mathcal A \slash \mathcal I$ is a complete order quotient map for any finite dimensional operator system $E$;
\item[(v)] the sequence $$0 \to \mathcal B \otimes_{\rm C^*\min} \mathcal I \to \mathcal B \otimes_{\rm C^*\min} \mathcal A \to \mathcal B \otimes_{\rm C^*\min} \mathcal A \slash \mathcal I \to 0$$ is exact for any $C^*$-algebra $\mathcal B$;
\item[(vi)] for any finite dimensional operator system $E$, every completely positive map $\varphi : E \to \mathcal A \slash \mathcal I$ lifts to a completely positive map $\tilde{\varphi} : E \to \mathcal A$;
\item[(vii)] for any finite dimensional operator system $E$, every unital completely positive map $\varphi : E \to \mathcal A \slash \mathcal I$ lifts to a unital completely positive map $\tilde{\varphi} : E \to \mathcal A$;
\item[(viii)] for any index set $I$, every unital completely positive finite rank map $\varphi : \mathfrak C_I \to \mathcal A \slash \mathcal I$ lifts to a unital completely positive map $\tilde{\varphi} : \mathfrak C_I \to \mathcal A$ with ${\rm Ker} \varphi = {\rm Ker} \tilde{\varphi}$;
\item[(ix)] every unital completely positive finite rank map $\varphi : \mathfrak C_1 \to \mathcal A \slash \mathcal I$ lifts to a unital completely positive map $\tilde{\varphi} : \mathfrak C_1 \to \mathcal A$ with ${\rm Ker} \varphi = {\rm Ker} \tilde{\varphi}$.
\end{enumerate}
\end{thm}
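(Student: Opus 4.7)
The nine conditions fall into three natural groupings: the tensor-factor quotient statements (i)--(v), the finite-dimensional CP lifting (vi)--(vii), and the finite-rank lifting with kernel preservation on the universal systems (viii)--(ix). My plan is to treat (iv) and (vi) as the hub, pass between them by a Choi correspondence, close the loop among (i)--(iv) by finite-dimensional reduction and Arveson, attach (v) to (ii) through Proposition~\ref{incomplete}, and connect (vi)--(vii) to (viii)--(ix) through Theorem~\ref{N} and the universal quotient construction.

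The routine implications are as follows. (i)$\Rightarrow$(ii)$\Rightarrow$(iii), (i)$\Rightarrow$(iv) and (vii)$\Rightarrow$(vi) are trivial; (viii)$\Rightarrow$(ix) follows by specialising $I=\mathbb N$ and using a UCP retract $\mathfrak C_{\mathbb N} \twoheadrightarrow \mathfrak C_0$ obtained by picking one index per matrix size; (vi)$\Rightarrow$(vii) is the standard unitization trick from \cite[Remark 8.3]{KPTT2}; and (iv)$\Rightarrow$(i) follows by finite-dimensional reduction, since a positive element of $M_n(\mathcal S \otimes_{\min} \mathcal A/\mathcal I)$ is a finite sum lying in some $M_n(E \otimes_{\min} \mathcal A/\mathcal I)$ with $E \subset \mathcal S$ finite-dimensional, and injectivity of the min tensor product makes it already positive at the smaller level. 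Condition (v) is attached to (ii) by Proposition~\ref{incomplete} together with the density \cite[Corollary 4.10]{KPTT1} of the operator system min tensor in the $C^*$-min tensor, which translates the $C^*$-algebraic short exact sequence into the complete order quotient statement on the incomplete tensors.

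The technical heart is the equivalence (iv)$\Leftrightarrow$(vi) via the Choi correspondence. For any finite-dimensional $E$, a choice of faithful state makes $E^*$ an operator system \cite[Corollary 4.5]{CE}, and \cite[Lemma 8.5]{KPTT2} identifies positive elements of $E^* \otimes_{\min} \mathcal A/\mathcal I$ with completely positive maps $E \to \mathcal A/\mathcal I$; a positive lifting along ${\rm id}_{E^*} \otimes \pi$ is precisely a CP lifting of the map, and since $E \mapsto E^*$ is an involution on finite-dimensional operator systems the two conditions become equivalent. For (iii)$\Rightarrow$(iv) I would route through (vi): embed $E \hookrightarrow M_k$, extend a given CP map $\varphi: E \to \mathcal A/\mathcal I$ by Arveson to $M_k$, interpret the extension as a positive Choi matrix in $M_k \otimes_{\min} \mathcal A/\mathcal I = B(\mathbb C^k) \otimes_{\min} \mathcal A/\mathcal I$, lift using (iii) with $H=\mathbb C^k$, and restrict the resulting CP map $M_k \to \mathcal A$ back to $E$; the $\varepsilon$-slack inherent in the complete order quotient is absorbed by an Effros--Haagerup iteration on the Banach algebra $\mathcal A$.

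For the universal-system layer, (vii)$\Rightarrow$(viii) is carried out by factoring a finite-rank UCP map $\varphi: \mathfrak C_I \to \mathcal A/\mathcal I$ through its finite-dimensional quotient $\mathfrak C_I / \ker\varphi$; the first isomorphism theorem for operator system quotients yields a UCP embedding $\overline{\varphi}: \mathfrak C_I/\ker\varphi \hookrightarrow \mathcal A/\mathcal I$, (vii) lifts it to UCP $\tilde{\psi}: \mathfrak C_I/\ker\varphi \to \mathcal A$, and composition with the canonical quotient produces $\tilde{\varphi}$. Injectivity of $\pi\circ\tilde{\psi} = \overline{\varphi}$ forces $\tilde{\psi}$ itself to be injective, whence $\ker \tilde{\varphi} = \ker \varphi$. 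The reverse (ix)$\Rightarrow$(vii) exploits Theorem~\ref{N}: for finite-dimensional $E$ and UCP $\varphi: E \to \mathcal A/\mathcal I$, take a complete order quotient $Q: \mathfrak C_0 \twoheadrightarrow E$; (ix) lifts $\varphi\circ Q$ to $\tilde{\psi}: \mathfrak C_0 \to \mathcal A$ with $\ker\tilde{\psi} = \ker(\varphi\circ Q) \supset \ker Q$, so $\tilde{\psi}$ descends through $Q$ to a UCP lift $\tilde{\varphi}: E \to \mathcal A$ (UCP because $Q$ is a complete order quotient map). I expect the main obstacle to be bridging the $\varepsilon$-loose complete order quotient language of (i)--(v) with the exact CP lifting demanded by (vi)--(ix); this is exactly where Banach completeness of $\mathcal A$ enters, through the Effros--Haagerup iteration used inside (iii)$\Rightarrow$(iv).
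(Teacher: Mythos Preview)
Your overall plan is close to the paper's, and the layers (iv)$\Leftrightarrow$(vi), (vii)$\Rightarrow$(viii), (ix)$\Rightarrow$(vii) are handled essentially as the paper does. However, your proposed proof of (iii)$\Rightarrow$(iv) has a genuine gap.

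You write ``embed $E\hookrightarrow M_k$''. This is not available in general: a finite-dimensional operator system admits a unital complete order embedding into some $M_k$ if and only if its $C^*$-envelope is finite-dimensional, and there are many finite-dimensional operator systems with infinite-dimensional $C^*$-envelope (for instance $\operatorname{span}\{1,z,\bar z\}\subset C(\mathbb T)$, or $\operatorname{span}\{1,u_i,u_i^*\}\subset C^*(\mathbb F_n)$). Worse, if your route worked it would prove (vi) unconditionally: Arveson extend to $M_k$, lift the Choi matrix exactly along the $C^*$-quotient $M_k(\mathcal A)\to M_k(\mathcal A/\mathcal I)$, and restrict. Since (vi) is known to fail for some pairs $(\mathcal A,\mathcal I)$, this shows the step cannot be repaired by tweaking. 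Note also that (iii) is stated for \emph{one} fixed separable $H$, so ``$H=\mathbb C^k$'' is not at your disposal.

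The paper's argument for (iii)$\Rightarrow$(iv) is genuinely different. One embeds $E\subset B(H)$, lifts a strictly positive $z\in E\otimes_{\min}\mathcal A/\mathcal I$ to a positive $\tilde z\in B(H)\otimes_{\min}\mathcal A$ using (iii), and then \emph{corrects} $\tilde z$ back into $E\otimes\mathcal A$: choose a self-adjoint (not completely positive) projection $P:B(H)\to E$ built from a dual basis, observe that $(\mathrm{id}-P)\otimes\mathrm{id}_{\mathcal A}(\tilde z)\in B(H)\otimes\mathcal I$, write it as $\sum b_i\otimes h_i$, and replace it by $\sum\|b_i\|\,1\otimes(h_i^++h_i^-)$. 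The resulting element lies in $E\otimes\mathcal A$, is positive, and still lifts $z$. This projection-plus-ideal-correction idea is the missing ingredient in your scheme.

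Two smaller points. First, (vii)$\Rightarrow$(vi) is not literally trivial: a non-unital completely positive $\varphi:E\to\mathcal A/\mathcal I$ cannot be unitized without damage, and the paper instead applies (vii) to the \emph{inclusion} of the finite-dimensional operator system $\varphi(E)+\mathbb C1_{\mathcal A/\mathcal I}\subset\mathcal A/\mathcal I$ and postcomposes. Second, for the passage from the $\varepsilon$-loose complete order quotient in (iv) to the exact positive lifting in (vi), the paper does not run an iteration; it invokes the fact that $E^*\otimes\mathcal I$ is completely order proximinal in $E^*\otimes_{\min}\mathcal A$ (since $E^*$ is finite-dimensional, $E^*\otimes_{\min}\mathcal A$ is complete and \cite[Corollary~5.1.5]{KPTT2} applies). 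Your ``Effros--Haagerup iteration'' remark is vague here; you should either make that iteration precise or use proximinality directly.
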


\begin{proof}
(i) $\Rightarrow$ (ii) $\Rightarrow$ (iii) and (viii) $\Rightarrow$ (ix) are trivial. (vi) $\Rightarrow$ (vii) follows from \cite[Remark 8.3]{KPTT2}. (ii) $\Leftrightarrow$ (v) follows from Proposition \ref{incomplete}. For (iii) $\Rightarrow$ (iv) and (iv) $\Rightarrow$ (i), it is sufficient to consider the first matrix level.

(iii) $\Rightarrow$ (iv). Let $E \subset B(H)$ for a separable Hilbert space $H$. Take a strictly positive element $z$ in $E \otimes_{\min} {\mathcal A \slash \mathcal I}$ which is an operator subsystem of $B(H) \otimes_{\min} \mathcal A \slash \mathcal I$. By the assumption, there exists a positive lifting $\tilde{z}$ in $B(H) \otimes_{\min} \mathcal A$. Let $\{ x_i : 1 \le i \le k \}$ be a self-adjoint basis of $E$ and $\{ \widehat{x}_i : 1 \le i \le k \}$ be its dual basis. Each functional $\widehat{x}_i$ on $E$ extends to a continuous self-adjoint functional on $B(H)$ which we still denote by $\widehat{x}_i$. The map $P := \sum_{i=1}^k \widehat{x}_i \otimes x_i : B(H) \to B(H)$ is a self-adjoint projection onto $E$. Since $$({\rm id}_{B(H)}-P) \otimes \pi (\tilde{z}) = z - (P \otimes {\rm id}_{\mathcal A \slash \mathcal I})(z)=0,$$ we have $$({\rm id}_{B(H)}-P) \otimes {\rm id}_{\mathcal A} (\tilde{z}) \in B(H) \otimes \mathcal I.$$ We write $$({\rm id}_{B(H)}-P) \otimes {\rm id}_{\mathcal A}(\tilde{z}) = \sum_{i=1}^n b_i \otimes h_i, \qquad b_i \in B(H)_{sa}, h_i \in \mathcal I_{sa}.$$ Each $h_i$ is decomposed into $h_i=h_i^+ - h_i^-$ for $h_i^+, h_i^- \in \mathcal I^+$. From $$\begin{aligned} 0 \le \tilde{z} & = (P \otimes {\rm id}_{\mathcal A})(\tilde{z}) + \sum_{i=1}^n b_i \otimes h_i^+ - \sum_{i=1}^n b_i \otimes h_i^- \\ & \le (P \otimes {\rm id}_{\mathcal A})(\tilde{z}) + \sum_{i=1}^n \|b_i\| 1 \otimes h_i^+ + \sum_{i=1}^n \|b_i\| 1 \otimes h_i^- \end{aligned}$$ and $$({\rm id}_{B(H)} \otimes \pi)((P \otimes {\rm id}_{\mathcal A})(\tilde{z}) + \sum_{i=1}^n \|b_i\| 1 \otimes h_i^+ + \sum_{i=1}^n \|b_i\| \otimes h_i^-) = z,$$ we see that $$(P \otimes {\rm id}_{\mathcal A})(\tilde{z}) + \sum_{i=1}^n \|b_i\| 1 \otimes h_i^+ + \sum_{i=1}^n \|b_i\| 1 \otimes h_i^- \in E \otimes_{\min} \mathcal A$$ is a positive lifting of $z$.

(iv) $\Rightarrow$ (i). Take a positive element $z=\sum_{i=1}^n x_i \otimes y_i$ in $\mathcal S \otimes_{\min} \mathcal A \slash \mathcal I$. Let $E$ be a finite dimensional operator system generated by $\{ x_i : 1 \le i \le n \}$. Since $E \otimes_{\min} {\mathcal A \slash \mathcal I}$ is an operator subsystem of $\mathcal S \otimes_{\min} \mathcal A \slash \mathcal I$, we have $z$ also positive in $E \otimes_{\min} \mathcal A \slash \mathcal I$. By the hypothesis, there exists a positive element $\tilde{z}$ in $E \otimes_{\min} \mathcal A$ such that $({\rm id}_E \otimes \pi)(\tilde{z})=z$. This element is also positive in $\mathcal S \otimes_{\min} \mathcal A$.

(iv) $\Leftrightarrow$ (vi). Suppose that $E$ is a finite dimensional operator system and $\varphi : E \to {\mathcal A \slash \mathcal I}$ is a completely positive map. The element $z$ in $E^* \otimes_{\min} \mathcal A \slash \mathcal I$ corresponding to $\varphi$ is positive. Since $E$ is finite dimensional, we have $E^* \otimes_{\min} \mathcal A=E^* \hat{\otimes}_{\min} \mathcal A$. The kernel $E^* \otimes \mathcal I$ of ${\rm id}_{E^*} \otimes \pi$ is completely order proximinal in $E^* \otimes_{\min} \mathcal A$ \cite[Corollary 5.1.5]{KPTT2}. By the hypothesis, $z$ lifts to a positive element $\tilde{z}$ in $E^* \otimes_{\min} \mathcal A$. The map $\tilde{\varphi} : E \to \mathcal A$ corresponding to $\tilde{z}$ is completely positive. The converse is merely the reverse of the argument.

(vii) $\Rightarrow$ (vi). The inclusion $\iota :  \varphi (E) + \mathbb C 1_{\mathcal A \slash \mathcal I} \subset \mathcal A \slash \mathcal I$ lifts to a unital completely positive map $\tilde{\iota} : \varphi (E) + \mathbb C 1_{\mathcal A \slash \mathcal I} \to \mathcal A$. The map $\tilde{\iota} \circ \varphi$ is the completely positive lifting of $\varphi$.

(vii) $\Rightarrow$ (viii). Let $Q : \mathfrak C_I \to \mathfrak C_I \slash {\rm Ker} \varphi$ be a quotient map. We have a factorization $\varphi = \psi \circ Q$ for $\psi : \mathfrak C_I \slash {\rm Ker} \varphi \to \mathcal A \slash \mathcal I$. By the hypothesis, $\psi$ lifts to a unital completely positive map $\tilde{\psi} : \mathfrak C_I \slash {\rm Ker} \varphi \to \mathcal A$. Then $\tilde{\psi} \circ Q$ is a unital completely positive lifting of $\varphi$ and their kernels coincide.

(ix) $\Rightarrow$ (vii).  By Theorem \ref{N}, there exists a complete order quotient map $\Phi : \mathfrak C_1 \to E$. The map $\varphi \circ \Phi : \mathfrak C_1 \to \mathcal A \slash \mathcal I$ lifts to a unital completely positive map $\psi : \mathfrak C_1 \to \mathcal A$ such that their kernels coincide. Since ${\rm Ker} \Phi \subset {\rm Ker} \psi$, we get that $\psi$ induces a map $\tilde{\varphi} : E \to \mathcal A \slash \mathcal I$ which is a unital completely positive lifting of $\varphi$.
\end{proof}

The following theorem can be regarded as an operator system version of  the quotient lemma.

\begin{thm}
Suppose that $\Phi : \mathcal S \to \mathcal T$ is a unital completely positive surjection for operator systems $\mathcal S$ and $\mathcal T$. Let $S_0$ be an operator subsystem that is dense in $\mathcal S$, $\mathcal T_0:=\Phi (\mathcal S_0)$, and $\Phi_0 = \Phi |_{\mathcal S_0} : \mathcal S_0 \to \mathcal T_0$ be the surjective restriction. Then, the following are equivalent:
\begin{enumerate}
\item[(i)] $\Phi : \mathcal S \to \mathcal T$ is a complete order quotient map and for any $\varepsilon>0$, $k \in \mathbb N$ and a self-adjoint element $x \in {\rm Ker} \Phi_k$, there exists a self-adjoint element $x_0 \in {\rm Ker} (\Phi_0)_k$ such that $x_0+\varepsilon 1 \ge x$;
\item[(ii)] $\Phi_0 : \mathcal S_0 \to \mathcal T_0$ is a complete order quotient map.
\end{enumerate}
\end{thm}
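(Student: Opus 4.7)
The plan is to prove the two implications separately. The direction (i) $\Rightarrow$ (ii) admits a one-step construction using both hypotheses of (i). Given $Q \in M_n(\mathcal T_0)^+$ and $\varepsilon > 0$, I first apply the complete order quotient property of $\Phi$ to obtain $P_1 \in M_n(\mathcal S)^+$ with $\Phi_n(P_1) = Q + \tfrac{\varepsilon}{2} I_n \otimes 1_{\mathcal T}$. Since $\mathcal T_0 = \Phi(\mathcal S_0)$, the amplified restriction $(\Phi_0)_n : M_n(\mathcal S_0) \to M_n(\mathcal T_0)$ is surjective and furnishes some $P_0 \in M_n(\mathcal S_0)$ with the same image. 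The difference $P_1 - P_0$ is a self-adjoint element of ${\rm Ker} \Phi_n$, so the kernel approximation hypothesis yields a self-adjoint $k \in {\rm Ker} (\Phi_0)_n$ with $k + \tfrac{\varepsilon}{2} I_n \otimes 1_{\mathcal S} \ge P_1 - P_0$. Setting $P := P_0 + k + \tfrac{\varepsilon}{2} I_n \otimes 1_{\mathcal S}$ produces an element of $M_n(\mathcal S_0)$ satisfying $P \ge P_1 \ge 0$ in $M_n(\mathcal S)$ (hence $P \in M_n(\mathcal S_0)^+$ by the inherited matrix ordering) and $(\Phi_0)_n(P) = Q + \varepsilon I_n \otimes 1_{\mathcal T}$, as required.

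For (ii) $\Rightarrow$ (i) I would first derive the kernel approximation and then upgrade to the complete order quotient property of $\Phi$. For the kernel approximation, given self-adjoint $x \in {\rm Ker} \Phi_k$ and $\varepsilon > 0$, fix $\eta < \varepsilon/4$ and a self-adjoint approximant $\tilde{x} \in M_k(\mathcal S_0)$ with $\|\tilde{x} - x\| < \eta$; then $r := \Phi_k(\tilde{x}) \in M_k(\mathcal T_0)$ is self-adjoint with $\|r\| < \eta$. Condition (ii) forces $\tilde{\Phi}_0 : \mathcal S_0 / {\rm Ker} \Phi_0 \to \mathcal T_0$ to be a unital complete order isomorphism, hence a complete isometry by \cite[Corollary 5.1.2]{ER}, so I can find a self-adjoint lift $u' \in M_k(\mathcal S_0)$ of $r$ with $\|u'\| < \eta$, obtained by symmetrizing a near-optimal norm lift via $\|(w + w^*)/2\| \le \|w\|$. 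Then $u := u' + \eta I_k \otimes 1_{\mathcal S}$ is positive in $M_k(\mathcal S_0)$ with $\|u\| < 2\eta$ and $\Phi_k(u) = r + \eta I_k \otimes 1_{\mathcal T}$, whence $x_0 := \tilde{x} - u + \eta I_k \otimes 1_{\mathcal S}$ lies in ${\rm Ker} (\Phi_0)_k$, is self-adjoint, and satisfies $\|x_0 - x\| < 4\eta < \varepsilon$, yielding $x_0 + \varepsilon I_k \otimes 1_{\mathcal S} \ge x$.

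With the kernel approximation established, I conclude by extending $\tilde{\Phi}_0$ by density. The natural map $\iota : \mathcal S_0 / {\rm Ker} \Phi_0 \to \mathcal S / {\rm Ker} \Phi$ is unital completely positive with norm-dense image $(\mathcal S_0 + {\rm Ker} \Phi) / {\rm Ker} \Phi$, and the kernel approximation promotes $\iota$ to a unital complete order embedding: given $v \in M_n(\mathcal S_0)$ with $v + {\rm Ker} \Phi \in M_n(\mathcal S / {\rm Ker} \Phi)^+$, for each $\delta > 0$ one chooses $k \in M_n({\rm Ker} \Phi)$ with $v + k + \tfrac{\delta}{2} I \otimes 1_{\mathcal S} \ge 0$ and then applies the kernel approximation at matrix level $n$ to produce $k_0 \in M_n({\rm Ker} \Phi_0)$ with $k_0 + \tfrac{\delta}{2} I \otimes 1_{\mathcal S} \ge k$, so that $v + k_0 + \delta I \otimes 1_{\mathcal S} \ge 0$ witnesses $v + {\rm Ker} \Phi_0 \in M_n(\mathcal S_0 / {\rm Ker} \Phi_0)^+$. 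Since a unital complete order embedding is a complete isometry, $\iota$ identifies $\mathcal S / {\rm Ker} \Phi$ as the norm completion of $\mathcal S_0 / {\rm Ker} \Phi_0$; likewise $\mathcal T$ is the norm completion of $\mathcal T_0$, so the unital complete isometry $\tilde{\Phi}_0$ extends by continuity to a unital complete isometry $\tilde{\Phi} : \mathcal S / {\rm Ker} \Phi \to \mathcal T$, which is automatically a complete order isomorphism, i.e., $\Phi$ is a complete order quotient map. The main obstacle I anticipate is the simultaneous control of self-adjointness and norm when producing the lift $u'$ in the kernel approximation step; this is handled by the symmetrization bound noted above, applied to any near-optimal-norm lift of $r$.
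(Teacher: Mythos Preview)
Your argument for (i)\,$\Rightarrow$\,(ii) is correct and essentially the same as the paper's (the paper uses the ``kernel correction'' form of the complete order quotient definition, you use the ``positive lift of $Q+\varepsilon 1$'' form, but these are equivalent). One small omission: you should note that $P_0$ may be taken self-adjoint by replacing it with $(P_0+P_0^*)/2$, so that $P_1-P_0$ is indeed self-adjoint.

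There is, however, a genuine gap in your (ii)\,$\Rightarrow$\,(i) kernel--approximation step. You claim that since $\tilde\Phi_0:\mathcal S_0/\operatorname{Ker}\Phi_0\to\mathcal T_0$ is a unital complete order isomorphism (hence a complete isometry), and $\|r\|<\eta$, you can find a self-adjoint lift $u'\in M_k(\mathcal S_0)$ of $r$ with $\|u'\|<\eta$ by ``symmetrizing a near-optimal norm lift''. This conflates the operator \emph{system} quotient norm with the operator \emph{space} quotient norm. The operator system quotient norm on $\mathcal S_0/\operatorname{Ker}\Phi_0$ is determined by the order, and the paper itself warns (just after the definition of operator system quotients) that it can differ from the infimum norm $\inf_{k\in\operatorname{Ker}\Phi_0}\|v+k\|$. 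Concretely, $\|\tilde x+\operatorname{Ker}(\Phi_0)_k\|<\eta$ only says that for every $\varepsilon'>0$ there exist \emph{separate} self-adjoint $k_1,k_2\in\operatorname{Ker}(\Phi_0)_k$ with $\tilde x-k_1\le(\eta+\varepsilon')I$ and $\tilde x+k_2\ge-(\eta+\varepsilon')I$; there is no reason a single $k$ gives the two-sided bound $-\eta I\le \tilde x-k\le\eta I$.

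The fix is to abandon the two-sided norm control and work directly with the order, which is exactly what the paper does. You only need the upper bound $u'\le cI$ (your final inequality $x_0+\varepsilon I\ge x$ uses $x-x_0=(x-\tilde x)+u'\le\eta I+u'$, so a one-sided bound on $u'$ suffices). Since $\|r\|<\eta$, the element $\eta I-r$ is strictly positive in $M_k(\mathcal T_0)$; the complete order quotient hypothesis on $\Phi_0$ then lifts it to some $Q\in M_k(\mathcal S_0)^+$, and $u':=\eta I-Q$ is a self-adjoint lift of $r$ with $u'\le\eta I$. This is precisely the paper's argument (with $y_0=\tilde x$ and different bookkeeping of the $\varepsilon$'s). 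Your subsequent deduction of the complete order quotient property of $\Phi$ via the embedding $\iota$ and density is correct, though more circuitous than the paper's direct order argument, which parallels part~(a) and never passes through norms or completions.
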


\begin{proof}
The following arguments apply to all matricial levels.

(i) $\Rightarrow$ (ii). Choose $\varepsilon >0$ and $\Phi_0 (y_0) \in \mathcal T_0^+$ for  a self-adjoint $y_0 \in \mathcal S_0$. By the hypothesis, there exist self-adjoint $x \in {\rm Ker} \Phi$ and $x_0 \in {\rm Ker} \Phi_0$ such that $$y_0+{\varepsilon \over 2}1 +x \in \mathcal S^+ \qquad \text{and} \qquad x \le x_0 +{\varepsilon \over 2}1.$$ It follows that $$y_0+\varepsilon 1 +x_0 \ge y_0+{\varepsilon \over 2}1 +x \ge 0.$$

(ii) $\Rightarrow$ (i). Take $\varepsilon >0$ and a self-adjoint element $x$ in ${\rm Ker} \Phi$. Since $\mathcal S_0$ is dense in $\mathcal S$, there exists a self-adjoint element $y_0$ in $\mathcal S_0$ such that $$x-{\varepsilon \over 3}1 \le y_0 \le x+{\varepsilon \over 3}1,$$ which implies that $$\Phi_0 (-y_0 + {\varepsilon \over 3}1) = \Phi (-y_0 +x+{\varepsilon \over 3}1) \in \mathcal T^+ \cap \mathcal T_0 = \mathcal T_0^+.$$ There exists an element $x_0$ in ${\rm Ker} \Phi_0$ such that $$-y_0 +{2 \over 3} \varepsilon 1+x_0 \ge 0.$$ From $$x-{\varepsilon \over 3}1 \le y_0 \le {2 \over 3} \varepsilon 1 +x_0,$$ it follows that $x \le \varepsilon 1 +x_0$.

Let $\Phi(y) \in \mathcal T^+$ for a self-adjoint $y \in \mathcal S$. There exists an element $y_0$ in $\mathcal S_0$ such that $$y-{\varepsilon \over 3}1 \le y_0 \le y+{\varepsilon \over 3}1,$$ which implies that $$\Phi_0 (y_0 + {\varepsilon \over 3}1) \ge \Phi (y) \ge 0.$$ There exists an element $x_0 \in {\rm Ker} \Phi_0$ such that $$y_0+x_0+{2 \over 3} \varepsilon 1 \ge 0.$$ It follows that $$y+x_0+\varepsilon 1 \ge y_0+x_0+{2 \over 3} \varepsilon 1 \ge 0.$$
\end{proof}

\medskip

\noindent {\bf Acknowledgements.} I would like to thank Ali S. Kavruk and Vern I. Paulsen for their careful reading of the manuscript and helpful comments.

\end{document}